\documentclass[11pt,reqno,a4paper,twoside]{extarticle}

\usepackage[osf]{newtxtext}

\usepackage[left=3cm,right=3cm,top=2.2cm,bottom=2.2cm]{geometry}

\usepackage{graphicx, varioref, amsfonts}
\usepackage{amsthm,amssymb,amsmath,mathrsfs,mathtools,stmaryrd} 
\usepackage{dsfont}
\usepackage{upgreek} 
\usepackage{esint} 
\usepackage{breakurl}
\usepackage{empheq}
\usepackage{enumerate,enumitem}
\usepackage{hyperref}
\usepackage{cleveref}
\usepackage{soul}
\usepackage{cancel}
\usepackage{amscd}
\usepackage{srcltx}
\usepackage{ifthen}
\usepackage{float}
\usepackage{color}
\usepackage{comment}
\usepackage{chngcntr}
\usepackage{etoolbox}
\usepackage{fancyhdr}
\usepackage{breqn}
\usepackage{cite}
\usepackage[T1]{fontenc}

\usepackage{titlefoot}


\usepackage{authblk} 
\setlength{\marginparwidth}{3.5cm}


\theoremstyle{plain}
\newtheorem{lemma}{Lemma}[section]

\newtheorem{proposition}{Proposition}[section]
\newtheorem{corollary}{Corollary}[section]
\newtheorem{theorem}{Theorem}[section]

\newtheorem{assumption}{Assumption}[section]

\theoremstyle{definition}

\newtheorem{definition}{Definition}[section]

\newtheorem{remark}{Remark}[section]




%
\newlist{todolist}{itemize}{2}
\setlist[todolist]{label=$\square$}
\usepackage{pifont}

\fancyhf{}
\pagestyle{fancy}
\fancyfoot[C]{\thepage}

\fancyhead[CE]{\small\shorttitle}
\fancyhead[CO]{\small\authors}

\begin{document}

\title{Necessary and Sufficient Conditions for Optimal Control of Semilinear Stochastic Partial Differential Equations
}
\newcommand\shorttitle{Optimality Conditions for Controlled SPDEs}
\date{December 8, 2023}

\author{Wilhelm Stannat$^1$}
\author{Lukas Wessels$^2$}
\newcommand\authors{Wilhelm Stannat and Lukas Wessels}

\affil{\small ${}^1$Technische Universit\"at Berlin\\\small ${}^2$Georgia Institute of Technology}

\maketitle

\unmarkedfntext{\textit{Mathematics Subject Classification (2020) ---} 93E20, 49K45, 60H15, 49L20}


\unmarkedfntext{\textit{Keywords and phrases ---} stochastic maximum principle, Pontryagin maximum principle, dynamic programming, verification theorem, stochastic optimal control, Hamilton--Jacobi--Bellman equation}

\unmarkedfntext{\textit{Email}: \textbullet$\,$ stannat@math.tu-berlin.de $\,$\textbullet$\,$ wessels@gatech.edu}

\begin{abstract}
	Using a recently introduced representation of the second order adjoint state as the solution of a function-valued backward stochastic partial differential equation (SPDE), we calculate the viscosity super- and subdifferential of the value function evaluated along an optimal trajectory for controlled semilinear SPDEs. This establishes the well-known connection between Pontryagin's maximum principle and dynamic programming within the framework of viscosity solutions. As a corollary, we derive that the correction term in the stochastic Hamiltonian arising in non-smooth stochastic control problems is non-positive. These results directly lead us to a stochastic verification theorem for fully nonlinear Hamilton--Jacobi--Bellman equations in the framework of viscosity solutions.
\end{abstract}


\section{Introduction}
\noindent
The two main approaches to mathematical control theory are the dynamic programming approach introduced by Bellman, see \cite{bellman1957}, which studies the value function, and the approach via Pontryagin's maximum principle, see \cite{pontryagin1962}, which evolves around the adjoint process. Under smoothness assumptions, Pontryagin already established in the deterministic case that the adjoint state is given by the derivative of the value function evaluated along an optimal trajectory. In the stochastic case, the adjoint state $p$ has to be complemented by another process $q$, which arises in the construction of a solution for the backward stochastic differential equation when applying It\^o's representation theorem. Bismut extended Pontryagin's result for $p$ to this case, see \cite{bismut1978}, and Bensoussan identified $q$ in terms of the second order derivative of the value function, see \cite{bensoussan1982}. Since then, there have been various generalizations of these results, dispensing with the smoothness assumptions on the value function, both in the deterministic and stochastic setting, as well as in finite and infinite dimensions, see \cite{cannarsa1991,cannarsa1992,zhou1991,zhou19912}. However, the full generalization to the case of controlled stochastic partial differential equations (SPDEs) was still missing. The first part of this paper closes this gap using a recently obtained characterization of the second order adjoint state as the solution of a function-valued backward SPDE, see \cite{stannat2020}. In the second part of this paper, we show how this relationship between the adjoint states and the (generalized) derivatives of the value function naturally leads us to a non-smooth version of the classical verification theorem that was obtained in the finite dimensional case in \cite{zhou1997,gozzi2005,gozzi2010}.

More specifically, consider the following stochastic optimal control problem. For given $(s,x)\in [0,T] \times L^2(\Lambda)$, minimize
\begin{equation}\label{costfunctional}
	J(s,x;u) := \mathbb{E} \left [ \int_s^T \int_{\Lambda} l(x^u_t(\lambda),u_t) \mathrm{d}\lambda \mathrm{d}t + \int_{\Lambda} h(x^u_T(\lambda)) \mathrm{d}\lambda \right ]
\end{equation}
over $u\in \mathcal{U}_s$ subject to
\begin{equation}\label{stateequation}
	\begin{cases}
		\mathrm{d}x^u_t = [ \Delta x^u_t + b(x^u_t, u_t) ] \mathrm{d}t + \sigma(x^u_t,u_t) \mathrm{d}W_t,\quad t\in[s,T]\\
		x^u_s=x\in L^2(\Lambda).
	\end{cases}
\end{equation}
Here, $\Lambda \subset \mathbb R$ is a bounded interval, and $\mathcal{U}_s$ is a given set of admissible controls which take values in some control domain $U$. We study equation \eqref{stateequation} with homogeneous Dirichlet boundary conditions, the coefficients $b$ and $\sigma$ are Nemytskii operators, and $l:\mathbb R\times U \to \mathbb{R}$ and $h:\mathbb R \to \mathbb{R}$ are functions of at most quadratic growth in $x\in \mathbb{R}$. Furthermore, $W$ is a cylindrical Wiener process on a real separable Hilbert space $\Xi$.

The main object of study in the dynamic programming approach is the value function defined as
\begin{equation}
	V(s,x) := \inf_{u\in\mathcal{U}_s} J(s,x;u)
\end{equation}
which satisfies the dynamic programming principle
\begin{equation}
	V(s,x) = \inf_{u\in \mathcal{U}_s} \mathbb{E} \left [ \int_s^t \int_{\Lambda} l(x^u_r(\lambda),u_r) \mathrm{d}\lambda \mathrm{d}r + V(t,x^u_t) \right ], \qquad \forall t \in [s,T],
\end{equation}
see \cite[Theorem 2.24]{fabbri2017}. Formally, one can derive the associated Hamilton--Jacobi--Bellman equation
\begin{equation}\label{hjb}
	\begin{cases}
		V_s + \langle \Delta x, DV \rangle_{L^2(\Lambda)} + \inf_{u\in U} \mathcal{H}(x,u,DV,D^2 V) = 0\\
		V(T,x) = \int_{\Lambda} h(x(\lambda)) \mathrm{d}\lambda,
	\end{cases}
\end{equation}
where $\mathcal{H}: L^2(\Lambda) \times U \times L^2(\Lambda) \times L(L^2(\Lambda)) \to \mathbb{R}$ is given by
\begin{equation}
	\mathcal{H}(x,u,p,P) := \int_{\Lambda} l(x(\lambda),u) \mathrm{d}\lambda + \langle p, b(x,u) \rangle_{L^2(\Lambda)} + \frac12 \text{tr} \left ( \sigma(x,u)^{\ast} P \sigma(x,u) \right ).
\end{equation}

Pontryagin's maximum principle for the present control problem has been obtained in \cite{stannat2020}, where the authors introduced a novel characterization of the second order adjoint state $(P,Q)$ as the solution of a function-valued backward SPDE representing the kernel of the associated operator-valued process $P \in L^2([s,T]\times \Omega; L_2(L^2(\Lambda)))$. Utilizing this representation, we now investigate the relationship between the value function and the first and second order adjoint processes $(p,q)$ and $(P,Q)$, respectively. Let $(\bar x,\bar u)$ be an optimal pair of our control problem. If the value function is sufficiently smooth, a formal calculation leads to the following analogue of the classical relationship obtained by Bensoussan in \cite[Section 3.2]{bensoussan1982}:
\begin{equation}
	\begin{cases}
		D V(t,\bar x_t) = p_t\\
		D^2 V(t,\bar x_t) = P_t
	\end{cases}
\end{equation}
and
\begin{equation}
	\partial_s V(t,\bar x_t) = - \langle \Delta \bar x_t, p_t\rangle_{H^{-1}(\Lambda)\times H^1_0(\Lambda)} - \mathcal{H}(\bar x_t,\bar u_t,p_t,P_t).
\end{equation}
In the first part of the present paper we rigorously prove the corresponding statement of this result within the framework of viscosity differentials. More precisely, we prove for almost every $t\in [s,T]$
\begin{equation}\label{intro}
	[- \langle \Delta \bar x_t, p_t\rangle_{H^{-1}(\Lambda)\times H^1_0(\Lambda)} - \mathcal{G}(t,\bar x_t, \bar u_t), \infty)\times \{p_t\} \times \mathcal{S}_{\succeq P_t}(L^2(\Lambda)) \subset D^{1,2,+}_{t+,x} V(t, \bar x_t),
\end{equation}
$\mathbb{P}$--almost surely, where $\mathcal{S}_{\succeq P_t}(L^2(\Lambda))$ is the convex cone of symmetric, positive operators translated by $P_t$ (see equation \eqref{convexcone}), the derivative on the right-hand side is the parabolic viscosity superdifferential of the value function (see Definition \ref{parabolicviscositydifferential}), and $\mathcal{G}:[s,T]\times L^2(\Lambda)\times U \to \mathbb{R}$ is given by
\begin{equation}
	\mathcal{G}(t,x,u) := \mathcal{H}(x,u,p_t,P_t) + \text{tr} \left ( \sigma(x,u)^{\ast} \left [ q_t - P_t \sigma(\bar x_t,\bar u_t) \right ] \right ).
\end{equation}
This means in particular
\begin{multline}
	\limsup_{\tau\downarrow 0, z\to 0} \frac{1}{\tau + \| z \|_{L^2(\Lambda)}^2}\Big [ V(t+\tau,\bar x_t+z) - V(t,\bar x_t)\\
	+ ( \langle \Delta \bar x_t, p_t\rangle_{H^{-1}(\Lambda)\times H^1_0(\Lambda)} + \mathcal{G}(t,\bar x_t, \bar u_t))\tau - \langle p_t, z\rangle_{L^2(\Lambda)} - \frac12 \langle z, P_t z \rangle_{L^2(\Lambda)} \Big ] \leq 0.
\end{multline}
As another corollary of \eqref{intro}, we also calculate the separate space- and time-differentials of the value function, and derive that the correction term arising in non-smooth stochastic control problems is non-positive, i.e.,
\begin{equation}\label{intro2}
	\text{tr} \left ( \sigma(\bar x_t,\bar u_t) (q_t - P_t \sigma(\bar x_t,\bar u_t)) \right ) \leq 0.
\end{equation}
The proof of \eqref{intro2} in the finite dimensional case takes advantage of a correspondence between elements in the parabolic viscosity superdifferential and test functions for viscosity solutions, see Proposition \ref{testfunction}. In the framework of $B$-continuous viscosity solutions for equations involving an unbounded operator, test functions are required to satisfy additional regularity properties, and therefore this correspondence does not hold anymore, see Remark \ref{remarktestfunction}. In our case we resolve this issue with a careful analysis of an explicit classical approximation of the value-function along the state trajectory, exploiting its $H^1_0(\Lambda)$-regularity.

The two necessary conditions \eqref{intro} and \eqref{intro2} lead us naturally to the stochastic verification theorem, a non-smooth generalization of the following classical result, see e.g. \cite[Section 2.5]{fabbri2017}. Let $\mathcal{V}\in C^{1,2}([s,T]\times L^2(\Lambda))$ be a solution of the HJB equation \eqref{hjb}, and let $u^{\ast}$ be an admissible control with corresponding state $x^{\ast}$. If for almost every $t\in [s,T]$
\begin{equation}\label{argmin}
	u^{\ast}_t \in \text{argmin}_{u\in U} \mathcal{H}(x^{\ast}_t,u,D\mathcal{V}(t,x^{\ast}_t), D^2\mathcal{V}(t,x^{\ast}_t))
\end{equation}
$\mathbb{P}$--almost surely, then $u^{\ast}$ is an optimal control.

In general, the solution of the HJB equation is not sufficiently regular to apply the classical verification theorem, thus requiring a formulation that does not involve the derivatives of the value function. In the infinite dimensional case, recent progress has been made in this direction, see \cite{fabbri20172,federico2018}. However, both these works only consider semilinear HJB equations which excludes the case of control-dependent noise. We prove the following verification theorem for control-dependent noise where the derivatives of the value function in \eqref{argmin} are replaced by an element in the parabolic viscosity superdifferential (for the corresponding result in finite dimensions, see \cite{zhou1997,gozzi2005,gozzi2010}). Let $V$ be the value function, and let $u^{\ast}$ be an admissible control with corresponding state $x^{\ast}$. If there exist processes 
\begin{equation}
	(G_t,p_t,P_t) \in D^{1,2,+}_{t+,x} V(t, x^{\ast}_t)
\end{equation}
such that
\begin{equation}
	\mathbb{E} \left [ \int_s^T G_t + \langle \Delta x^{\ast}_t, p_t\rangle_{H^{-1}(\Lambda)\times H^1_0(\Lambda)} + \mathcal{H}( x^{\ast}_t, u^{\ast}_t,p_t,P_t) \mathrm{d}t \right ] \leq 0,
\end{equation}
then $u^{\ast}$ is an optimal control. For precise assumptions, see Theorem \ref{verification}. This result holds in a more general setting than the one introduced above. Since the verification theorem is of independent interest, we perform the proof under more general assumptions, see Section \ref{verificationtheorem}. In particular, we do not assume that the coefficients of the control problem are of Nemytskii-type and we do not restrict to one-dimensional space domains. The proof of this result in the finite dimensional case again takes advantage of Proposition \ref{testfunction}, and the infinite dimensional case involving an unbounded operator therefore requires similar arguments as the ones employed in the proof of \eqref{intro2}.

In order to characterize the value function as the unique viscosity solution of the HJB equation \eqref{hjb} in the sense of \cite[Definition 3.35]{fabbri2017}, additional assumptions are needed. We also prove a verification theorem in this setting in terms of a viscosity subsolution to the HJB equation, see Theorem \ref{ch4:verification2}. A similar result for the control of deterministic PDEs was obtained in \cite{fabbri2010}. The drawback of their result is that they assume the existence of a test function $\varphi$ which produces a maximum of $V-\varphi$ and whose derivatives coincide with the element $(G_t,p_t,P_t)$ in the parabolic viscosity superdifferential. We don't assume the existence of such a test function, but work instead with an explicit construction of a function $\phi$ (which is not a test function in the sense of \cite[Definition 3.32]{fabbri2017}) and exploit the higher regularity of the state trajectory. The verification theorem from \cite{fabbri2010} was also stated for the control of SPDEs in \cite{fabbri2006}. However, the proof relies on \cite[Chapter 5, Lemma 5.2]{yong1999}, which is incorrect as pointed out in \cite{federico2011}. Finally, let us mention the recent preprint \cite{chen2022}, in which the authors prove a stochastic verification theorem for controlled infinite dimensional systems. In this manuscript, the authors work under the assumption that the solution of the state equation takes values in the domain of the unbounded operators, which requires additional assumptions on the coefficients of the state equation. For a more detailed discussion, see Remark \ref{remarkiii}.

The paper is organized as follows. In Section \ref{notation}, we introduce the notation and state the assumptions that we are working under throughout the rest of the paper. Section \ref{necessaryoptimalityconditions} contains necessary optimality conditions complementing Pontryagin's maximum principle. In Subsection \ref{parabolicderivatives}, we consider the differential inclusion for the parabolic viscosity differential. Subsections \ref{sectionspace} and \ref{sectiontime} are concerned with the space- and time-differentials, respectively. In Subsection \ref{nonpositivity} we derive the non-positivity of the correction term. Section \ref{verificationtheorem} is devoted to the sufficient optimality condition, the stochastic verification theorem for SPDEs.

\section{Preliminaries}\label{notation}

\subsection{Assumptions}
Let $\Lambda\subset \mathbb{R}$ be a bounded interval, and $H^{\gamma}_0(\Lambda)$, $\gamma >0$, be the fractional Sobolev space of order $\gamma$ with Dirichlet boundary conditions. Furthermore, let $H^{-1}(\Lambda)$ denote the dual space of $H^1_0(\Lambda)$. We fix a finite time horizon $T>0$, an initial time $s\in [0,T)$, and solve the state equation \eqref{stateequation} in the variational setting on the Gelfand triple $H^1_0(\Lambda) \hookrightarrow L^2(\Lambda) \hookrightarrow H^{-1}(\Lambda)$, for details see \cite[Chapter 4]{liu2015}.

For real, separable Hilbert spaces $\mathcal{X}, \mathcal{Y}$, let $L(\mathcal{X},\mathcal{Y})$ denote the space of bounded linear operators from $\mathcal{X}$ to $\mathcal{Y}$, $L_2(\mathcal{X},\mathcal{Y}) \subset L(\mathcal{X},\mathcal{Y})$ denote the subspace of Hilbert-Schmidt operators, and $L_1(\mathcal{X},\mathcal{Y}) \subset L(\mathcal{X},\mathcal{Y})$ denote the subspace of nuclear operators. Let $L(\mathcal{X}) := L(\mathcal{X},\mathcal{X})$, $L_2(\mathcal{X}) := L_2(\mathcal{X},\mathcal{X})$, and $L_1(\mathcal{X}) := L_1(\mathcal{X},\mathcal{X})$. Furthermore, let $\mathcal{S}(\mathcal{X})$ denote the space of bounded, linear, symmetric operators on $\mathcal{X}$. By an abuse of notation, we are going to use the same notation for a function $P\in L^2(\Lambda^2)$ and the associated operator in $L_2(L^2(\Lambda))$ given by
\begin{equation}
	f \mapsto \int_{\Lambda} P(\cdot,\lambda) f(\lambda) \mathrm{d}\lambda,
\end{equation}
for $f\in L^2(\Lambda)$. Note that $\|P\|_{L^2(\Lambda^2)} = \| P \|_{L_2(L^2(\Lambda))}$.

We impose the following assumptions on the coefficients of the control problem:
\begin{assumption}\label{assumption}
	\begin{enumerate}
		\item Let $(\Omega^{\nu},\mathcal{F}^{\nu})$ be a standard measurable space (see \cite[Definition 1.11]{fabbri2017}), and let $(W^{\nu}_t)_{t\in [s,T]}$ be a cylindrical Wiener process on a probability space $(\Omega^{\nu},\mathcal{F}^{\nu},\mathbb{P}^{\nu})$ with values in some real separable Hilbert space $\Xi$ and $W^{\nu}_s =0$ $\mathbb{P}^{\nu}$-almost surely. Let $(\mathcal{F}^s_{\nu,t})_{t\in [s,T]}$ be the filtration generated by $(W^{\nu}_t)$ augmented by all $\mathbb{P}^{\nu}$-null sets. We call $\nu = (\Omega^{\nu}, \mathcal{F}^{\nu}, ( \mathcal{F}^s_{\nu,t})_{t\in [s,T]} ,\mathbb{P}^{\nu}, W^{\nu} )$ a reference probability space.
		\item Let $U$ be a non-empty subset of a separable Banach space $\mathcal{U}$, and let
		\begin{multline}
			\mathcal{U}_s^{\nu} := \Big \{ u:[s,T]\times \Omega \to U \Big | u \text{ is } (\mathcal{F}^s_{\nu,t})-\text{progressively measurable and}\\
			\sup_{t\in [s,T]} \mathbb{E} \left [ \| u_t \|_{\mathcal{U}}^k \right ] < \infty, \forall k\in \mathbb{N} \Big \}.
		\end{multline}
		The set of all admissible controls is given by
		\begin{equation}
			\mathcal{U}_s := \bigcup_{\nu} \mathcal{U}^{\nu}_s,
		\end{equation}
		where the union is taken over all reference probability spaces $\nu$.
		\item Let $l: \mathbb{R} \times U \to \mathbb{R}$, $h: \mathbb{R} \to \mathbb{R}$, $b: \mathbb{R} \times U \to \mathbb{R}$, and $\sigma : \mathbb{R} \times U \to L_2(\Xi, \mathbb{R})$ be twice continuously differentiable with respect to the first variable, $l,l_x,l_{xx} ,h,h_x,h_{xx}, b,b_x,b_{xx}, \sigma,\sigma_x,\sigma_{xx}$ be continuous, $l_{xx},h_{xx},b_x,b_{xx},\sigma_x,\sigma_{xx}$ be bounded, and $l_x,h_x, b, \sigma$ be bounded by $C(1+|x|+ \|u\|_{\mathcal{U}})$, and $l,h$ be bounded by $C(1+|x|^2+ \|u\|^2_{\mathcal{U}})$. By an abuse of notation, the associated Nemytskii operators on $L^2(\Lambda)$ are denoted by the same letters.
	\end{enumerate}
\end{assumption}
Under these assumptions, the value function satisfies the dynamic programming principle, see \cite[Theorem 2.24]{fabbri2017}.

Throughout the paper, we denote by
\begin{equation}
	\mathbb{E}^s_{\nu,t}[\, \cdot \, ] := \mathbb{E} [ \, \cdot \, | \mathcal{F}^s_{\nu,t} ]
\end{equation}
the conditional expectation on a given reference probability space $\nu$.

\subsection{Adjoint Equations}\label{adjointequations}

Throughout the paper, $(\bar x,\bar u)$ is going to be an optimal pair. We consider the first order adjoint equation
\begin{equation}\label{firstadjoint}
	\begin{cases}
		\mathrm{d}p_t = -\left [ \Delta p_t + b_{x}(\bar x_t,\bar u_t) p_t + \langle \sigma_{x}(\bar x_t,\bar u_t), q_t \rangle_{L_2(\Xi,\mathbb{R})} + l_{x}(\bar x_t,\bar u_t) \right ]\mathrm{d}t + q_t \mathrm{d}W_t\\
		p_T = h_{x}(\bar x_T).
	\end{cases}
\end{equation}
It is well known that the solution $(p,q)$, called the first order adjoint state, has the following regularity:
\begin{equation}
	p \in L^2 ([s,T]\times\Omega;H^1_0(\Lambda))\cap L^2(\Omega;C([s,T];L^2(\Lambda)))
\end{equation}
and
\begin{equation}
	q\in L^2([s,T]\times\Omega;L_2(\Xi,L^2(\Lambda))),
\end{equation}
see e.g. \cite[Theorem 4.1]{bensoussan1983}.

Furthermore, we consider the second order adjoint equation
\begin{equation}\label{secondadjoint}
	\begin{cases}
		\mathrm{d}P_t(\lambda,\mu) = - [ \Delta P_t(\lambda,\mu) + ( b_{x}(\bar x_t(\lambda),\bar u_t)) + b_{x}(\bar x_t(\mu),\bar u_t) ) P_t(\lambda,\mu)\\
		\qquad\qquad\qquad + \langle \sigma_{x}(\bar x_t(\lambda), \bar u_t), \sigma_{x}(\bar x_t(\mu), \bar u_t) \rangle_{L_2(\Xi,\mathbb{R})} P_t(\lambda,\mu) \\
		\qquad\qquad\qquad + \langle \sigma_{x} (\bar x_t(\lambda),\bar u_t) + \sigma_{x} (\bar x_t(\mu),\bar u_t), Q_t(\lambda,\mu)\rangle_{L_2(\Xi,\mathbb{R})}\\
		\qquad\qquad\qquad + \delta^{\ast}( l_{xx}(\bar x_t(\lambda), \bar u_t) ) + \delta^{\ast}( b_{xx}(\bar x_t(\lambda), \bar u_t) p_t(\lambda) )\\
		\qquad\qquad\qquad + \delta^{\ast}( \langle \sigma_{xx}(\bar x_t(\lambda),\bar u_t), q_t \rangle_{L_2(\Xi,\mathbb{R})} ) ] \mathrm{d}t + Q_t(\lambda,\mu) \mathrm{d}W_t\\
		P_T(\lambda,\mu) = \delta^{\ast}( h_{xx}(\bar x_T(\lambda)) ),
	\end{cases}
\end{equation}
where $\delta:H^1_0(\Lambda^2) \to L^2(\Lambda)$ is defined by $\delta(w)(\lambda) := w(\lambda,\lambda)$, and $\delta^{\ast} : L^2(\Lambda) \to H^{-1}(\Lambda^2)$ is its adjoint, i.e.
\begin{equation}
	\langle \delta^{\ast}(f) , w \rangle_{H^{-1}(\Lambda^2)\times H^1_0(\Lambda^2)} := \int_{\Lambda} f(\lambda) \delta(w)(\lambda) \mathrm{d}\lambda = \int_{\Lambda} f(\lambda) w(\lambda,\lambda) \mathrm{d}\lambda,
\end{equation}
for $f\in L^2(\Lambda)$, $w\in H^1_0(\Lambda^2)$. Since $P$ itself does not have an integral representation with integrands in $H^{-1}(\Lambda)$ and we need such an integral representation in order to apply It\^o's formula for variational processes, we need to introduce the mollified second order adjoint process as well. Consider the mollified second order adjoint equation
\begin{equation}\label{mollifiedsecondorderadjoint}
	\begin{cases}
		\mathrm{d}P^{\eta}_t(\lambda,\mu) = - [ \Delta P^{\eta}_t(\lambda,\mu) + ( b_{x}(\bar x_t(\lambda),\bar u_t) + b_{x}(\bar x_t(\mu),\bar u_t) ) P^{\eta}_t(\lambda,\mu)\\
		\qquad\qquad\qquad + \langle \sigma_{x}(\bar x_t(\lambda), \bar u_t), \sigma_{x}(\bar x_t(\mu), \bar u_t) \rangle_{L_2(\Xi,\mathbb{R})} P^{\eta}_t(\lambda,\mu) \\
		\qquad\qquad\qquad + \langle \sigma_{x} (\bar x_t(\lambda),\bar u_t) + \sigma_{x} (\bar x_t(\mu),\bar u_t), Q^{\eta}_t(\lambda,\mu)\rangle_{L_2(\Xi,\mathbb{R})}\\
		\qquad\qquad\qquad + \delta^{\ast}( l_{xx}(\bar x_t(\lambda), \bar u_t) ) + \delta^{\ast}( b_{xx}(\bar x_t(\lambda), \bar u_t) p_t(\lambda) )\\
		\qquad\qquad\qquad + \delta^{\ast}( \langle \sigma_{xx}(\bar x_t(\lambda),\bar u_t), q_t \rangle_{L_2(\Xi,\mathbb{R})} ) ] \mathrm{d}t + Q^{\eta}_t(\lambda,\mu) \mathrm{d}W_t\\
		P^{\eta}_T(\lambda,\mu) = h^{\eta}_{xx}(\lambda,\mu).
	\end{cases}
\end{equation}
where
\begin{equation}\label{meta}
	h^{\eta}_{xx}(\lambda,\mu) := 
	\frac 12 \left( h_{xx}(\bar x_T(\lambda)) + h_{xx}(\bar x_T(\mu))\right) \frac{1}{\sqrt{4\pi \eta}} \exp \left ( - \frac{|\lambda-\mu|^2}{4\eta} \right ) \in L^2(\Lambda^2).
\end{equation}
Note that
\begin{equation}
	\int_{\Lambda^2} h^{\eta}_{xx}(\lambda,\mu) w(\lambda,\mu) \mathrm{d}\lambda \mathrm{d}\mu \to \int_{\Lambda} h_{xx}(\bar x_T(\lambda)) w(\lambda,\lambda) \mathrm{d}\lambda
\end{equation}
$\mathbb{P}$-almost surely, for all $w\in H^1_0(\Lambda^2)$. Therefore, $h^{\eta}_{xx}$ approximates the terminal condition of \eqref{secondadjoint}. Furthermore, $h^{\eta}_{xx}$ is symmetric and due to the structure of equation \eqref{meta}, $P^{\eta}$ inherits this property.

We have the following result about existence, uniqueness and the convergence of solutions of equations \eqref{secondadjoint} and \eqref{mollifiedsecondorderadjoint}, see \cite{stannat2020}.
\begin{theorem}
	There exists a unique solution $(P,Q)$ of equation \eqref{secondadjoint} satisfying
	\begin{equation}
		P \in L^2([s,T]\times\Omega;L^2(\Lambda^2)) \cap L^2(\Omega;C([s,T];H^{-1}(\Lambda^2)))
	\end{equation}
	and
	\begin{equation}
		Q\in L^2([s,T]\times\Omega;L_2(\Xi,H^{-1}(\Lambda^2))).
	\end{equation}
	Furthermore, there exists a unique solution $(P^{\eta},Q^{\eta})$ of equation \eqref{mollifiedsecondorderadjoint} satisfying
	\begin{equation}
		P^{\eta} \in L^2([s,T]\times\Omega; H^1_0(\Lambda^2)) \cap L^2(\Omega; C([s,T];L^2(\Lambda)))
	\end{equation}
	and
	\begin{equation}
		Q^{\eta} \in L^2([s,T]\times\Omega;L_2(\Xi,L^2(\Lambda^2))).
	\end{equation}
	Finally, it holds
	\begin{align}
		P^{\eta} &\to P \quad \text{in}\; L^2([0,T]\times\Omega; L^2(\Lambda^2)),\\
		Q^{\eta} &\to Q \quad\text{in}\; L^2([0,T]\times\Omega; L_2(\Xi,H^{-1}(\Lambda^2))).
	\end{align}
\end{theorem}
The convergence of the mollified second order adjoinst state $(P^{\eta},Q^{\eta})$ to the second order adjoint state $(P,Q)$ follows from the a priori estimate
\begin{equation}
	\begin{split}
		&\mathbb{E} \left [ \int_s^T \| P^{\eta}_r - P_r \|_{L^2(\Lambda^2)}^2 \mathrm{d}r + \int_s^T \| Q^{\eta}_r - Q_r \|_{L_2(\Xi,H^{-1}(\Lambda^2))}^2 \mathrm{d}r \right ]\\
		&\leq C \mathbb{E} \left [ \| h^{\eta}_{xx} - \delta^{\ast}(h_{xx}(\bar x_T)) \|_{H^{-1}(\Lambda^2)}^2 \right ],
	\end{split}
\end{equation}
which can be proven analogously to the a priori bound \cite[equation (6.8)]{stannat2020}.

\begin{remark}\label{deltaremark}
	Note that the restriction to operators of Nemytskii-type and to one-dimensional space domains is applied in order to characterize the second order adjoint state as the solution of equation \eqref{secondadjoint}. More precisely, the restriction to one space dimension is applied in order for $\delta$ to be a continuous operator from $H^1_0(\Lambda^2) \to L^2(\Lambda)$. In higher space dimmensions, one needs more regularity than $H^1_0(\Lambda^2)$ for $\delta$ to be continuous. This can be achieved by imposing additional regularity assumptions on the noise coefficient $\sigma$, see also \cite[Remark 4.3]{stannat2020}.
\end{remark}

\section{Necessary Optimality Conditions}\label{necessaryoptimalityconditions}

\subsection{Parabolic Derivatives}\label{parabolicderivatives}

In this section, we are going to prove a relationship between the parabolic viscosity super- and subdifferentials of the value function on the one hand and the first and second order adjoint state on the other hand.

First, let us first recall the definition of the parabolic viscosity super- and subdifferential.
\begin{definition}\label{parabolicviscositydifferential}
	For $v\in C([s,T]\times L^2(\Lambda))$ the parabolic viscosity superdifferential of $v$ at $(t,x)\in [s,T)\times L^2(\Lambda)$ is the set
	\begin{multline}
		D^{1,2,+}_{t+,x} v(t,x) := \left \{ (G,p,P) \in \mathbb{R} \times L^2(\Lambda) \times \mathcal{S}(L^2(\Lambda)) \middle | \limsup_{\tau\downarrow 0, z\to 0} \frac{1}{\tau + \| z \|_{L^2(\Lambda)}^2}\right.\\
		\left. \vphantom{\frac{1}{|t-\hat t| + \| x-\hat x\|_{L^2(\Lambda)}^2}} \left [ v(t+\tau,x+z) - v(t, x) - G\tau - \langle p, z \rangle_{L^2(\Lambda)} - \frac12 \langle z, Pz \rangle_{L^2(\Lambda)} \right ] \leq 0 \right  \}.
	\end{multline}
	The parabolic viscosity subdifferential $D^{1,2,-}_{t+,x} v$ is defined analogously with the $\limsup$ replaced by $\liminf$ and the $\leq$ replaced by $\geq$.
\end{definition}
Now, we are ready to state the main result of this section.
\begin{theorem}\label{spacetime}
	Let $(\bar x,\bar u)$ be an optimal pair of the control problem \eqref{costfunctional} and \eqref{stateequation}, $(p,q)$ and $(P,Q)$ be the first and second order adjoint states, respectively, and $V$ be the value function. Then it holds for almost every $t\in [s,T]$
	\begin{equation}\label{4.84}
		[- \langle \Delta\bar x_t, p_t\rangle_{H^{-1}(\Lambda)\times H^1_0(\Lambda)} - \mathcal{G}(t,\bar x_t, \bar u_t), \infty)\times \{p_t\} \times \mathcal{S}_{\succeq P_t}(L^2(\Lambda)) \subset D^{1,2,+}_{t+,x} V(t, \bar x_t),
	\end{equation}
	$\mathbb{P}$--almost surely, and for almost every $t\in [s,T]$
	\begin{equation}\label{4.85}
		D^{1,2,-}_{t+,x} V(t,\bar x_t) \subset (-\infty, - \langle \Delta \bar x_t, p_t\rangle_{H^{-1}(\Lambda)\times H^1_0(\Lambda)} - \mathcal{G}(t,\bar x_t,\bar u_t)] \times \{p_t \} \times \mathcal{S}_{\preceq P_t}(L^2(\Lambda)),
	\end{equation}
	$\mathbb{P}$--almost surely, where
	\begin{equation}\label{convexcone}
		\mathcal{S}_{\succeq P_t}(L^2(\Lambda)) := \{ S\in \mathcal{S}(L^2(\Lambda)): S-P_t \text{ is a positive operator}\},
	\end{equation}
	and $\mathcal{S}_{\preceq P_t}(L^2(\Lambda))$ is defined analogously.
\end{theorem}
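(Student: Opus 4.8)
The plan is to establish the two inclusions by exploiting the dynamic programming principle together with Itô's formula applied to the optimal trajectory, expanding the value function to second order along $\bar x_t$ and reading off the viscosity superdifferential from the sign of the resulting drift. The starting point is the fundamental relation from dynamic programming: since $(\bar x,\bar u)$ is optimal, the process
\begin{equation*}
M_t := \int_s^t \int_\Lambda l(\bar x_r(\lambda),\bar u_r)\,\mathrm{d}\lambda\,\mathrm{d}r + V(t,\bar x_t)
\end{equation*}
is a martingale (not merely a submartingale), so its drift vanishes. To exploit this at the level of second-order expansions, I would use the mollified second order adjoint process $(P^\eta,Q^\eta)$, which, unlike $(P,Q)$, admits an integral representation with $H^{-1}(\Lambda)$-valued integrands so that Itô's formula for variational processes applies. **First I would** construct, for fixed $t$ and each sufficiently small perturbation, an explicit second-order test expansion of the form $V(t,\bar x_t) + \langle p_t, z\rangle + \tfrac12\langle z, P_t z\rangle + G_t\,\tau$ and compare it against $V$ evaluated along a perturbed trajectory emanating from $(t,\bar x_t)$.

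**The key computational step** is to apply Itô's formula to this quadratic functional evaluated along the optimally controlled state and to identify the drift term. Here the first order adjoint equation \eqref{firstadjoint} supplies the evolution of $p_t$ and the mollified equation \eqref{mollifiedsecondorderadjoint} supplies that of $P_t$; combining the drifts and using the definition of $\mathcal{H}$ and of $\mathcal{G}$, the correction term $\mathrm{tr}(\sigma^\ast[q_t - P_t\sigma(\bar x_t,\bar u_t)])$ appears precisely as the cross-variation between the $z$-increment and the martingale part of $p$, which is why $\mathcal{G}$ rather than $\mathcal{H}$ enters \eqref{4.84}. The optimality of $\bar u_t$, via the Hamiltonian minimization condition from Pontryagin's maximum principle, forces the drift to have the correct sign: this yields that $(G_t,p_t,P_t)$ lies in $D^{1,2,+}_{t+,x}V(t,\bar x_t)$ whenever $G_t \geq -\langle\Delta\bar x_t,p_t\rangle - \mathcal{G}(t,\bar x_t,\bar u_t)$, giving the half-line in the first factor. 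The inclusion of the full cone $\mathcal{S}_{\succeq P_t}$ in the third factor is then immediate from the definition of the superdifferential: replacing $P_t$ by any $S\succeq P_t$ only decreases the quadratic term $\tfrac12\langle z, Sz\rangle$ appearing with a minus sign, which preserves the $\limsup \leq 0$ condition.

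**The subdifferential inclusion** \eqref{4.85} would follow by a symmetric argument with the inequalities reversed, but it is genuinely more delicate: whereas the superdifferential inclusion only requires exhibiting enough elements, the subdifferential inclusion is an upper bound that must hold for every element, so I would argue that any $(G,p',P')\in D^{1,2,-}_{t+,x}V(t,\bar x_t)$ is forced to satisfy $p'=p_t$, $P'\preceq P_t$, and $G\leq -\langle\Delta\bar x_t,p_t\rangle - \mathcal{G}$. The equality $p'=p_t$ should come from testing the $\liminf$ condition along $z=\pm\varepsilon e$ for directions $e$ and taking $\tau=0$, forcing linear terms to match $p_t$ (which is itself identified with $DV$ along $\bar x$ in the smooth case); the ordering $P'\preceq P_t$ and the bound on $G$ then follow from the second-order and time directions.

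**The main obstacle I expect** is the passage to the limit $\eta\downarrow 0$ in the mollification: Itô's formula can only be justified rigorously with the mollified pair $(P^\eta,Q^\eta)$, whose terminal condition \eqref{meta} converges to $\delta^\ast(h_{xx})$ only in the weak $H^{-1}(\Lambda^2)$ topology, and one must show that the drift identity obtained at level $\eta$ passes to the limit while retaining control of the trace terms $\mathrm{tr}(\sigma^\ast P^\eta\sigma)$ and of the cross term with $Q^\eta$. This requires the convergence $P^\eta\to P$ in $L^2([s,T]\times\Omega;L^2(\Lambda^2))$ established in \cite{stannat2020} together with uniform estimates ensuring that the perturbation error in Taylor-expanding $V$ along the perturbed trajectory is genuinely $o(\tau + \|z\|^2_{L^2(\Lambda)})$ uniformly in $\eta$. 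A secondary technical point is handling the unbounded operator $\Delta$: the pairing $\langle\Delta\bar x_t, p_t\rangle_{H^{-1}\times H^1_0}$ is well defined only because $\bar x_t\in H^1_0(\Lambda)$ and $p_t\in H^1_0(\Lambda)$ for a.e.\ $t$, so I would need to restrict attention to the full-measure set of times where both trajectories enjoy this $H^1_0$-regularity before the drift expansion makes sense.
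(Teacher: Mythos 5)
Your proposal misidentifies the mechanism that produces the inequality, and this is a genuine gap rather than a stylistic difference. You invoke the martingale property of $M_t$ along the optimal trajectory and then appeal to ``the Hamiltonian minimization condition from Pontryagin's maximum principle'' to force the sign of the drift. Neither ingredient can do the required work. Superdifferential membership demands an upper bound on $V(t+\tau,\bar x_t+z)-V(t,\bar x_t)$ for \emph{arbitrary} small $z\in L^2(\Lambda)$, i.e.\ at points off the optimal trajectory, about which the martingale property says nothing; and the PMP minimization condition compares control values at a fixed state, so it cannot produce a bound in the state-perturbation direction (it is in fact never used in the paper's proof). The correct source of the inequality is the infimum property of $V$ under conditional expectation: $V(t+\tau,\bar x_t+z)\leq \mathbb{E}_t[\text{cost of running }\bar u\text{ from }(t+\tau,\bar x_t+z)]$, combined with $V(t,\bar x_t)=\mathbb{E}_t[\text{cost of the optimal pair}]$. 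From there the paper's work is a second-order expansion of the \emph{perturbed trajectory}, not of $V$ (your phrase ``Taylor-expanding $V$ along the perturbed trajectory'' is meaningless here, since $V$ is precisely not differentiable): one introduces $x^{\tau,z}$ driven by the same control $\bar u$ with datum $\bar x_t+z$ at time $t+\tau$, the variational process $y^{\tau,z}=x^{\tau,z}-\bar x$ with a priori bounds $\mathbb{E}_t[\sup_r\|y^{\tau,z}_r\|^{2k}_{L^2(\Lambda)}]\leq C(\tau^k+\|z\|^{2k}_{L^2(\Lambda)})$, Taylor-expands $l$ and $h$, and converts the linearized costs into quantities at time $t+\tau$ via It\^o's formula applied to the duality pairings $\langle y^{\tau,z}_r,p_r\rangle_{L^2(\Lambda)}$ and $\langle y^{\tau,z}_r\otimes y^{\tau,z}_r,P^{\eta}_r\rangle_{L^2(\Lambda^2)}$; separate time-increment lemmas (It\^o applied to $\langle p_r,\bar x_r-\bar x_t\rangle$ and to $P^{\eta}$ paired with $(\bar x_r-\bar x_t)^{\otimes 2}$) then produce the terms $\langle p_t,\Delta\bar x_t+b\rangle$, $\langle q_t,\sigma\rangle$ and $\mathrm{tr}(P^{\eta}_t\sigma\sigma^{\ast})$. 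In particular the correction term arises from the cross-variation between the \emph{time increment of $\bar x$} and the martingale part of $p$, not ``between the $z$-increment and the martingale part of $p$'': $z$ is $\mathcal{F}_t$-measurable and has no cross-variation with anything.

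The second gap is the passage $\eta\downarrow 0$, which you flag but resolve incorrectly. You propose remainder estimates that are $o(\tau+\|z\|^2_{L^2(\Lambda)})$ ``uniformly in $\eta$''; the actual obstruction is the terminal mismatch
\begin{equation*}
\mathbb{E}_t\left[\int_{\Lambda}h_{xx}(\bar x_T(\lambda))\,y^{\tau,z}_T(\lambda)^2\,\mathrm{d}\lambda-\int_{\Lambda^2}h^{\eta}_{xx}(\lambda,\mu)\,y^{\tau,z}_T(\lambda)y^{\tau,z}_T(\mu)\,\mathrm{d}\lambda\,\mathrm{d}\mu\right],
\end{equation*}
which for fixed $\eta$ is \emph{not} $o(\tau+\|z\|^2_{L^2(\Lambda)})$, so no uniform-in-$\eta$ remainder bound of the kind you describe can exist. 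The paper's resolution is an order-of-limits argument built on compactness: prove $\mathbb{E}_t[\|y^{\tau,z}_T\|^2_{H^{\gamma}_0(\Lambda)}]\leq C(\tau+\|z\|^2_{L^2(\Lambda)})$ for $\gamma\in(0,1/4)$, choose a sequence $(\tau_k,z_k)$ realizing the limit superior, use the compact embedding $H^{\gamma}_0(\Lambda)\subset\subset L^2(\Lambda)$ to extract a subsequence along which $y^{\tau_k,z_k}_T/\sqrt{\tau_k+\|z_k\|^2_{L^2(\Lambda)}}$ converges in $L^2(\Lambda)$ to some $\tilde y_T$, take $k\to\infty$ \emph{first} at fixed $\eta$, and only then send $\eta\to 0$, at which point both the terminal mismatch evaluated at $\tilde y_T$ and the errors $\|P^{\eta}_t-P_t\|_{L^2(\Lambda^2)}$ vanish. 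This compactness device is the key technical idea of the proof and is absent from your proposal. By contrast, the parts of your argument that are correct — the half-line in $G$ and the cone $\mathcal{S}_{\succeq P_t}(L^2(\Lambda))$ following trivially from the definition of the superdifferential, and the deduction of \eqref{4.85} by sandwiching an arbitrary subdifferential element against the superdifferential element provided by \eqref{4.84} — are the easy parts of the theorem.
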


\begin{remark}
	Equation \eqref{4.84} in particular implies that the parabolic viscosity superdifferential is not empty.
\end{remark}

First, we discuss several lemmata that are needed in the proof of Theorem \ref{spacetime}. We suggest that the reader skip directly to the proof of Theorem \ref{spacetime} in Section \ref{proof} and refer to the lemmata as needed.

\subsubsection{Variational Equation}\label{variationalequation}

We begin by introducing the variational equation and deriving a priori bounds as well as regularity results for the solution.

\begin{lemma}\label{aprioriy}
	Let $\tau \geq 0$ and $z\in L^2(\Lambda)$, and let
	\begin{equation}
		\begin{cases}
			\mathrm{d} x^{\tau,z}_r = \left [ \Delta x^{\tau,z}_r + b( x^{\tau,z}_r, \bar u_r) \right ] \mathrm{d}r + \sigma( x^{\tau,z}_r, \bar u_r) \mathrm{d}W_r, \quad r\in [t+\tau,T]\\
			x^{\tau,z}_{t+\tau} = z+\bar x_t\in L^2(\Lambda).
		\end{cases}
	\end{equation}
	Define $y^{\tau,z}_r := x^{\tau,z}_r - \bar x_r$, i.e.
	\begin{equation}\label{variationalprocess}
		\begin{cases}
			\mathrm{d} y^{\tau,z}_r = \left [ \Delta y^{\tau,z}_r + b( x^{\tau,z}_r, \bar u_r) - b( \bar x_r, \bar u_r) \right ] \mathrm{d}r\\
			\qquad\qquad\qquad\qquad\qquad + \left [ \sigma(x^{\tau,z}_r, \bar u_r) - \sigma(\bar x_r, \bar u_r) \right ] \mathrm{d}W_r,\quad r\in [t+\tau,T]\\
			y^{\tau,z}_{t+\tau} = z+\bar x_t -\bar x_{t+\tau}\in L^2(\Lambda).
		\end{cases}
	\end{equation}
	Then, for any $k\in \mathbb{N}$ it holds for almost every $t\in [s,T]$
	\begin{equation}\label{aprioriy2}
		\mathbb{E}^s_{\nu,t} \left [ \left ( \int_{t+\tau}^T \| y^{\tau,z}_r \|_{H^1_0(\Lambda)}^{2} \mathrm{d}r \right )^k + \sup_{t+\tau \leq r \leq T} \left \| y^{\tau,z}_r \right \|_{L^2(\Lambda)}^{2k} \right ] \leq C \left ( \tau^k + \| z \|_{L^2(\Lambda)}^{2k} \right )
	\end{equation}
	$\mathbb{P}$--almost surely.
\end{lemma}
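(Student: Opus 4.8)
The plan is to decouple the estimate into two independent parts: a standard moment bound for the variational process \eqref{variationalprocess} on $[t+\tau,T]$, controlled by the conditional $2k$-th moment of its initial value $y^{\tau,z}_{t+\tau}$, and a time-regularity estimate for the optimal trajectory that bounds this initial moment by $C(\tau^k+\|z\|_{L^2(\Lambda)}^{2k})$. The decisive difficulty lives entirely in the second part.

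For the evolution on $[t+\tau,T]$ I would apply the It\^o formula for variational processes to $r\mapsto \|y^{\tau,z}_r\|_{L^2(\Lambda)}^2$. Coercivity of the Laplacian with Dirichlet conditions on the bounded interval $\Lambda$ gives $\langle \Delta y,y\rangle_{H^{-1}(\Lambda)\times H^1_0(\Lambda)}\le -c\|y\|_{H^1_0(\Lambda)}^2$, while the boundedness of $b_x$ and $\sigma_x$ in \Cref{assumption} yields the global Lipschitz bounds $\|b(x,\bar u_r)-b(\bar x_r,\bar u_r)\|_{L^2(\Lambda)}\le C\|y\|_{L^2(\Lambda)}$ and $\|\sigma(x,\bar u_r)-\sigma(\bar x_r,\bar u_r)\|_{L_2(\Xi,L^2(\Lambda))}^2\le C\|y\|_{L^2(\Lambda)}^2$. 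Together these produce the energy inequality
\[
\|y^{\tau,z}_r\|_{L^2(\Lambda)}^2 + 2c\int_{t+\tau}^r\|y^{\tau,z}_\rho\|_{H^1_0(\Lambda)}^2\,\mathrm{d}\rho \le \|y^{\tau,z}_{t+\tau}\|_{L^2(\Lambda)}^2 + C\int_{t+\tau}^r\|y^{\tau,z}_\rho\|_{L^2(\Lambda)}^2\,\mathrm{d}\rho + M_r,
\]
with $M$ the martingale coming from the stochastic integral. Raising to the $k$-th power, taking $\mathbb{E}_t$, estimating $\mathbb{E}_t[\sup_r|M_r|^k]$ by the conditional Burkholder--Davis--Gundy inequality and absorbing the resulting $\sup_r\|y^{\tau,z}_r\|_{L^2(\Lambda)}^{2k}$ term by Young's inequality, and finally invoking the (pathwise, conditional) Gronwall lemma, bounds both terms on the left-hand side of \eqref{aprioriy2} by $C\,\mathbb{E}_t[\|y^{\tau,z}_{t+\tau}\|_{L^2(\Lambda)}^{2k}]$ with $C$ depending only on $T$ and the Lipschitz constants.

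It then remains to bound $\mathbb{E}_t[\|y^{\tau,z}_{t+\tau}\|_{L^2(\Lambda)}^{2k}]$. Since $y^{\tau,z}_{t+\tau}=z+(\bar x_t-\bar x_{t+\tau})$ and $z$ is deterministic, it suffices to show $\mathbb{E}_t[\|\bar x_{t+\tau}-\bar x_t\|_{L^2(\Lambda)}^{2k}]\le C\tau^k$ for a.e. $t$. Writing the increment of $\bar x$ in mild form,
\[
\bar x_{t+\tau}-\bar x_t = (e^{\tau\Delta}-\mathrm{id})\bar x_t + \int_t^{t+\tau}e^{(t+\tau-r)\Delta}b(\bar x_r,\bar u_r)\,\mathrm{d}r + \int_t^{t+\tau}e^{(t+\tau-r)\Delta}\sigma(\bar x_r,\bar u_r)\,\mathrm{d}W_r,
\]
the drift term is of order $\tau^{2k}$ by the linear growth of $b$ and the contractivity of $e^{\cdot\Delta}$ on $L^2(\Lambda)$, and the stochastic convolution is of order $\tau^k$ by the maximal inequality for stochastic convolutions (or the factorization method) together with the linear growth of $\sigma$. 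The delicate term is $(e^{\tau\Delta}-\mathrm{id})\bar x_t$, which I would control through the semigroup H\"older estimate $\|(e^{\tau\Delta}-\mathrm{id})v\|_{L^2(\Lambda)}\le C\tau^{1/2}\|v\|_{H^1_0(\Lambda)}$, valid for $v\in H^1_0(\Lambda)$, giving the bound $C\tau^k\|\bar x_t\|_{H^1_0(\Lambda)}^{2k}$.

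This last point is exactly where I expect the main obstacle, and where the qualifier ``for almost every $t$'' becomes indispensable: because the Laplacian is only $H^{-1}(\Lambda)$-valued, the $L^2(\Lambda)$-increment of $\bar x$ over $[t,t+\tau]$ is not $O(\tau^{1/2})$ for every $t$. However, since $\bar x\in L^2([s,T]\times\Omega;H^1_0(\Lambda))$, we have $\bar x_t\in H^1_0(\Lambda)$ $\mathbb{P}$-almost surely for almost every $t$, and on this full-measure set of times the semigroup estimate closes the argument (the resulting constant may depend on $\omega$ through $\|\bar x_t\|_{H^1_0(\Lambda)}$, which is consistent with the $\mathbb{P}$-a.s.\ formulation). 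Throughout I would keep track of the conditioning: the initial datum $z+\bar x_t$ and the factor $\bar x_t$ are $\mathcal{F}^s_{\nu,t}$-measurable and thus pull out of $\mathbb{E}_t$, while all stochastic integrals run over times $\ge t$, so the conditional forms of Burkholder--Davis--Gundy and Gronwall apply without change. The cases $\tau=0$ (where $y^{\tau,z}_{t+\tau}=z$) and $z=0$ are recovered as special instances.
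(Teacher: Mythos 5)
Your proposal is correct and follows essentially the same route as the paper: reduce via standard a priori estimates to the conditional moment of the initial datum $y^{\tau,z}_{t+\tau}=z+\bar x_t-\bar x_{t+\tau}$, then invoke the time-regularity bound $\mathbb{E}_t \left [ \| \bar x_{t+\tau}-\bar x_t \|_{L^2(\Lambda)}^{2k} \right ] \leq C\tau^k$. The only difference is that the paper asserts this last bound (equation \eqref{timeregularityx}) as a fact, whereas you prove it via the mild formulation and the semigroup smoothing estimate, correctly identifying that the resulting constant is random through $\|\bar x_t\|_{H^1_0(\Lambda)}$ and that this is precisely why the statement can only hold for almost every $t$, $\mathbb{P}$--almost surely.
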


\begin{proof}
	Using standard a priori estimates, we obtain
	\begin{equation}
		\begin{split}
			&\mathbb{E}^s_{\nu,t} \left [ \left ( \int_{t+\tau}^T \| y^{\tau,z}_r \|_{H^1_0(\Lambda)}^{2} \mathrm{d}r \right )^k + \sup_{t+\tau \leq r \leq T} \left \| y^{\tau,z}_r \right \|_{L^2(\Lambda)}^{2k} \right ]\\
			&\leq C \mathbb{E}^s_{\nu,t} \left [ \left \| z+\bar x_t-\bar x_{t+\tau} \right \|^{2k}_{L^2(\Lambda)} \right ]\\
			&\leq C \left ( \left \| z \right \|^{2k}_{L^2(\Lambda)} + \mathbb{E}^s_{\nu,t} \left [ \left \| \bar x_{t+\tau} -\bar x_t \right \|^{2k}_{L^2(\Lambda)} \right ] \right ).
		\end{split}
	\end{equation}
	The claim follows from the fact that
	\begin{equation}\label{timeregularityx}
		\mathbb{E}^s_{\nu,t} \left [ \left \| \bar x_{t+\tau} -\bar x_t \right \|^{2k}_{L^2(\Lambda)} \right ] \leq C \tau^k,
	\end{equation}
	for every $k\in\mathbb{N}$.
\end{proof}

Next, we derive a Taylor expansion for the variational process $y^{\tau,z}$.
\begin{lemma}\label{variationalprocesstaylor}
	The variational process $y^{\tau,z}$ given by \eqref{variationalprocess} satisfies the equations
	\begin{equation}\label{zerothorder1}
		\begin{cases}
			\mathrm{d} y^{\tau,z}_r = \left [ \Delta y^{\tau,z}_r + b_x(\bar x_r, \bar u_r) y^{\tau,z}_r + \varphi^{1}_r \right ] \mathrm{d}r+ \left [ \sigma_x(\bar x_r, \bar u_r) y^{\tau,z}_r + \psi^{1}_r \right ] \mathrm{d}W_r,\quad r\in [t+\tau,T]\\
			y^{\tau,z}_{t+\tau} = z+\bar x_t -\bar x_{t+\tau} \in L^2(\Lambda),
		\end{cases}
	\end{equation}
	where
	\begin{equation}
		\begin{split}
			\varphi^{1}_r &:= \int_0^1 \left [ b_x(\bar x_r + \theta y^{\tau,z}_r, \bar u_r) - b_x(\bar x_r, \bar u_r) \right ] y^{\tau,z}_r \mathrm{d}\theta\\
			\psi^{1}_r &:= \int_0^1 \left [ \sigma_x(\bar x_r + \theta y^{\tau,z}_r, \bar u_r) - \sigma_x(\bar x_r, \bar u_r) \right ] y^{\tau,z}_r \mathrm{d}\theta,
		\end{split}
	\end{equation}
	and
	\begin{equation}\label{ysecondorder}
		\begin{cases}
			\mathrm{d} y^{\tau,z}_r = \left [ \Delta y^{\tau,z}_r + b_x(\bar x_r, \bar u_r) y^{\tau,z}_r + \frac12 b_{xx}(\bar x_r, \bar u_r) y^{\tau,z}_r y^{\tau,z}_r + \varphi^{2}_r \right ] \mathrm{d}r\\
			\qquad\qquad + \left [ \sigma_x(\bar x_r, \bar u_r) y^{\tau,z}_r + \frac12 \sigma_{xx}(\bar x_r, \bar u_r) y^{\tau,z}_r y^{\tau,z}_r + \psi^{2}_r \right ] \mathrm{d}W_r,\quad r\in [t+\tau,T]\\
			y^{\tau,z}_{t+\tau} = z+\bar x_t -\bar x_{t+\tau}\in L^2(\Lambda),
		\end{cases}
	\end{equation}
	where
	\begin{equation}
		\begin{split}
			\varphi^{2}_r &:= \int_0^1 (1-\theta) \left [ b_{xx}(\bar x_r + \theta y^{\tau,z}_r, \bar u_r) - b_{xx}(\bar x_r, \bar u_r) \right ] y^{\tau,z}_r y^{\tau,z}_r \mathrm{d}\theta\\
			\psi^{2}_r &:= \int_0^1 (1-\theta) \left [ \sigma_{xx}(\bar x_r + \theta y^{\tau,z}_r, \bar u_r) - \sigma_{xx}(\bar x_r, \bar u_r) \right ] y^{\tau,z}_r y^{\tau,z}_r \mathrm{d}\theta.
		\end{split}
	\end{equation}
	The remainder terms satisfy for every $k\in\mathbb{N}$, for almost every $t\in [s,T]$
	\begin{equation}\label{order1}
		\begin{split}
			&\mathbb{E}^s_{\nu,t} \left [ \int_{t+\tau}^T  \| \varphi^{1}_r \|_{L^2(\Lambda)}^{2k} \mathrm{d}r \right ] = o\left ( \tau^k + \| z \|_{L^2(\Lambda)}^{2k} \right ),\\
			&\mathbb{E}^s_{\nu,t} \left [ \int_{t+\tau}^T  \| \psi^{1}_r \|_{L_2(\Xi,L^2(\Lambda))}^{2k} \mathrm{d}r \right ] = o\left ( \tau^k + \| z \|_{L^2(\Lambda)}^{2k} \right ),
		\end{split}
	\end{equation}
	$\mathbb{P}$--almost surely, and for almost every $t\in [s,T]$
	\begin{equation}\label{order2}
		\begin{split}
			&\mathbb{E}^s_{\nu,t} \left [ \int_{t+\tau}^T  \| \varphi^{2}_r \|_{L^2(\Lambda)}^k \mathrm{d}r \right ] = o\left ( \tau^k + \| z \|_{L^2(\Lambda)}^{2k} \right ),\\
			&\mathbb{E}^s_{\nu,t} \left [ \int_{t+\tau}^T  \| \psi^{2}_r \|_{L_2(\Xi,L^2(\Lambda))}^k \mathrm{d}r \right ] = o\left ( \tau^k + \| z \|_{L^2(\Lambda)}^{2k} \right ),
		\end{split}
	\end{equation}
	$\mathbb{P}$--almost surely.
\end{lemma}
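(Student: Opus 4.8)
The plan is to obtain the two expansions as pure Taylor algebra and then to reduce all four remainder estimates to a single mechanism: boundedness of the second and third derivatives supplies a dominating $O(\tau^k+\|z\|^{2k})$ bound, while their continuity, combined with the vanishing of the variational process, upgrades this to $o(\tau^k+\|z\|^{2k})$.

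\emph{Deriving the equations.} Since $x^{\tau,z}_r=\bar x_r+y^{\tau,z}_r$, I would apply Taylor's theorem with integral remainder pointwise in $\lambda$ to the Nemytskii operators. Writing $b(\bar x_r+y_r,\bar u_r)-b(\bar x_r,\bar u_r)=\int_0^1 b_x(\bar x_r+\theta y_r,\bar u_r)y_r\,\mathrm{d}\theta$ and adding and subtracting $b_x(\bar x_r,\bar u_r)y_r$ isolates the linearised drift and leaves exactly the stated $\varphi^1_r$, giving \eqref{zerothorder1}; the second-order integral remainder $\int_0^1(1-\theta)b_{xx}(\bar x_r+\theta y_r,\bar u_r)y_r^2\,\mathrm{d}\theta$, after subtracting $\tfrac12 b_{xx}(\bar x_r,\bar u_r)y_r^2$, produces $\varphi^2_r$ and \eqref{ysecondorder}. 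The diffusion remainders $\psi^1_r,\psi^2_r$ arise identically with $\sigma$ in place of $b$ and the Hilbert--Schmidt norm on $L_2(\Xi,\mathbb R)$. This step is algebraic and uses only twice continuous differentiability from \Cref{assumption}.

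\emph{Estimating the remainders.} I would exploit the pointwise factorisations $\varphi^1_r(\lambda)=y_r(\lambda)\,G^1_r(\lambda)$, where $G^1_r(\lambda):=\int_0^1[b_x(\bar x_r(\lambda)+\theta y_r(\lambda),\bar u_r)-b_x(\bar x_r(\lambda),\bar u_r)]\,\mathrm{d}\theta$, and $\varphi^2_r(\lambda)=y_r(\lambda)^2 G^2_r(\lambda)$ with $b_{xx}$ and weight $(1-\theta)$. Boundedness of $b_x,b_{xx}$ gives $|G^1_r|,|G^2_r|\le C$, so $\|\varphi^1_r\|_{L^2}^{2k}\le C\|y_r\|_{L^2}^{2k}$ and, using the one-dimensional embedding $H^1_0(\Lambda)\hookrightarrow L^\infty(\Lambda)$ to make sense of the quartic term, $\|\varphi^2_r\|_{L^2}^{k}\le C\|y_r\|_{L^\infty}^{k}\|y_r\|_{L^2}^{k}$; integrating in $r$ and inserting the a priori bounds of \Cref{aprioriy}, which hold for every moment, yields the $O(\tau^k+\|z\|^{2k})$ bound after distributing the powers between $\sup_r\|y_r\|_{L^2}$ and $\int\|y_r\|_{H^1_0}^2\,\mathrm{d}r$ via H\"older's inequality. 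To upgrade $O$ to $o$, I would observe that continuity of $b_x,b_{xx}$ forces $G^1_r(\lambda),G^2_r(\lambda)\to0$ wherever $y_r(\lambda)\to0$, while \Cref{aprioriy} makes $y^{\tau,z}\to0$ in $L^2(\Omega\times[t+\tau,T]\times\Lambda)$ (hence a.e.\ along subsequences) and makes the normalised quantities $\|y^{\tau,z}_r\|_{L^2}^2/(\tau+\|z\|^2)$ bounded in every $L^k$, therefore uniformly integrable. A generalised dominated convergence (Vitali) argument then sends the ratio $\mathbb E_t[\int\|\varphi^1_r\|_{L^2}^{2k}\,\mathrm{d}r]/(\tau^k+\|z\|^{2k})$ to $0$ along any sequence $(\tau,z)\to0$, which is \eqref{order1}; the same scheme, with the extra spatial power absorbed through $H^1_0(\Lambda)\hookrightarrow L^\infty(\Lambda)$, gives \eqref{order2}. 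The diffusion remainders are treated verbatim.

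The main obstacle is precisely this passage from $O$ to $o$. Because $b_x$ and $b_{xx}$ are only assumed bounded and continuous, not uniformly continuous, on all of $\mathbb R$, while the arguments $\bar x_r(\lambda)$ range over an unbounded set, one cannot pass to the limit under the spatial integral through a single global modulus of continuity. The resolution is to separate the order-carrying factor $|y_r|^2$ from the vanishing factor $|G^i_r|^2$: the higher-moment bounds of \Cref{aprioriy} furnish uniform integrability of the former after normalisation, and pointwise continuity furnishes a.e.\ vanishing of the latter, so that a Vitali-type theorem applies. The conditional expectation $\mathbb E_t$ and the qualifiers ``for a.e.\ $t$'' and ``$\mathbb P$--a.s.'' do not disturb this argument, which is carried out pathwise in the conditioning.
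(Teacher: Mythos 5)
Your derivation of \eqref{zerothorder1} and \eqref{ysecondorder} coincides with the paper's (Taylor's theorem with integral remainder for the Nemytskii operators), but your treatment of the remainder estimates is genuinely different, and this is where the comparison matters. The paper's mechanism is quantitative: since $b_{xx},\sigma_{xx}$ are bounded, $b_x,\sigma_x$ are Lipschitz, so $|\varphi^1_r|\le C\theta|y^{\tau,z}_r|^2$ pointwise, and \Cref{aprioriy} then gives a bound of order $\tau^{2k}+\|z\|_{L^2(\Lambda)}^{4k}$, which is $o\bigl(\tau^k+\|z\|_{L^2(\Lambda)}^{2k}\bigr)$ simply because it is of strictly higher order; for \eqref{order2} the paper argues identically, invoking Lipschitz continuity of $b_{xx},\sigma_{xx}$. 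Your mechanism is qualitative: boundedness of the derivatives for the $O$-bound, then pointwise continuity plus uniform integrability (Vitali) to upgrade $O$ to $o$. This does buy something real: Lipschitz continuity of $b_{xx},\sigma_{xx}$ is \emph{not} part of \Cref{assumption}, so for \eqref{order2} your route is the one that matches the stated hypotheses, whereas the paper silently strengthens them; your argument for \eqref{order1}, where the dominating quantities involve only $\|y^{\tau,z}_r\|_{L^2(\Lambda)}$, is sound as a sequential statement.

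There are, however, two genuine gaps. First, the Vitali argument is rate-free and inherently sequential: given a sequence $(\tau_n,z_n)\to 0$ you extract a.e.-convergent subsequences, so the exceptional $\mathbb{P}$-null set depends on the sequence. But the lemma is consumed in the proof of \Cref{spacetime} with the opposite order of quantifiers: there one first fixes $t$ and a full-measure set of $\omega$ on which all lemmata hold, and only afterwards chooses the sequence $(\tau_k,z_k)$ realizing the $\limsup$ --- a sequence that depends on $\omega$. Statements of the form ``for every fixed sequence, almost surely'' do not cover $\omega$-dependent sequences; the paper's deterministic bound $C\bigl(\tau^{2k}+\|z\|_{L^2(\Lambda)}^{4k}\bigr)$ does, and this is precisely what its Lipschitz mechanism delivers and yours does not. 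Second, your uniform-integrability step for \eqref{order2} does not go through for large $k$: after normalization the dominating quantity is $\|\tilde y_r\|_{L^4(\Lambda)}^{2k}$, and by Gagliardo--Nirenberg any $L^{1+\delta}$-bound for it forces control of time integrals of $\|\tilde y_r\|_{H^1_0(\Lambda)}$ to powers exceeding $2$ as soon as $k\ge 4$; \Cref{aprioriy} only controls powers of $\int_r\|\tilde y_r\|^2_{H^1_0(\Lambda)}\mathrm{d}r$ (the power sits \emph{outside} the time integral), and H\"older/Jensen go the wrong way, so ``holds for every moment'' does not supply the needed domination. To be fair, the paper's own display is careless on the same point --- it writes $\|y^{\tau,z}_r\|_{L^2(\Lambda)}^2$ where the pointwise bound only gives $\|y^{\tau,z}_r\|_{L^4(\Lambda)}^2$ --- but its higher-order mechanism at least yields, with explicit rates, the small values of $k$ that are actually used later in \Cref{duality11}, \Cref{duality22} and \Cref{regularityterminalcondition}.
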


\begin{proof}
	The equations follow from the original equation for $y^{\tau,z}$ and Taylor's theorem for the G\^ateaux derivative (see \cite[Section 4.6]{zeidler1986}).
	
	Now let us prove the first asymptotic in \eqref{order1}. By Lipschitz continuity of the derivative of $b$, we have
	\begin{equation}
		\begin{split}
			& \mathbb{E}^s_{\nu,t} \left [ \int_{t+\tau}^T \left \| \int_0^1 \left [ b_x(\bar x_r + \theta y^{\tau,z}_r, \bar u_r) - b_x(\bar x_r, \bar u_r) \right ] y^{\tau,z}_r \mathrm{d}\theta \right \|_{L^2(\Lambda)}^{2k} \mathrm{d}r \right ]\\
			&\leq \mathbb{E}^s_{\nu,t} \left [ \int_{t+\tau}^T \int_0^1 \theta^{2k} \left \| y^{\tau,z}_r \right \|_{L^2(\Lambda)}^{4k} \mathrm{d}\theta \mathrm{d}r \right ] \\
			&\leq \mathbb{E}^s_{\nu,t} \left [ \int_{t+\tau}^T \left \| y^{\tau,z}_r \right \|_{L^2(\Lambda)}^{4k} \mathrm{d}r \right ]\\
			&\leq C \left ( \tau^{2k} + \| z \|_{L^2(\Lambda)}^{4k} \right ),
		\end{split}
	\end{equation}
	where we used Lemma \ref{aprioriy} in the last step. The remaining estimates follow analogously using the Lipschitz continuity of the first derivative of $\sigma$ and the Lipschitz continuity of the second derivatives of $b$ and $\sigma$.
\end{proof}
The following higher regularity of the variational process at the terminal time is needed in order to extract convergent subsequences as $\tau + \| z \|_{L^2(\Lambda)}^2$ tends to zero.
\begin{lemma}\label{regularityterminalcondition}
	Let $y^{\tau,z}$ be the variational process given by equation \eqref{zerothorder1}. Then, for any $\gamma \in (0,1/4)$, we have for almost every $t\in [s,T]$
	\begin{equation}
		\mathbb{E}^s_{\nu,t} \left [ \left \| y^{\tau,z}_T \right \|_{H^{\gamma}_0(\Lambda)}^2 \right ] \leq C \left ( \tau + \| z \|_{L^2(\Lambda)}^2 \right )
	\end{equation}
	$\mathbb{P}$--almost surely.
\end{lemma}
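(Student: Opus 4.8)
The plan is to represent $y^{\tau,z}_T$ through the variation-of-constants (mild) formula associated to \eqref{zerothorder1} and to exploit the smoothing of the Dirichlet heat semigroup $S(r):=e^{r\Delta}$. Writing $A:=-\Delta$, I would use that $H^{\gamma}_0(\Lambda)=D(A^{\gamma/2})$ with equivalent norms for $\gamma\in(0,1/2)$, so that for $\gamma<1/4$ no boundary-condition subtleties arise, together with the analyticity bound $\|A^{\gamma/2}S(h)\|_{L(L^2(\Lambda))}\le C h^{-\gamma/2}$. Evaluating the mild solution of \eqref{zerothorder1} at $r=T$ gives
\[
y^{\tau,z}_T = S(T-t-\tau)\,y^{\tau,z}_{t+\tau} + \int_{t+\tau}^T S(T-\rho)\big[b_x(\bar x_\rho,\bar u_\rho)y^{\tau,z}_\rho+\varphi^{1}_\rho\big]\,\mathrm{d}\rho + \int_{t+\tau}^T S(T-\rho)\big[\sigma_x(\bar x_\rho,\bar u_\rho)y^{\tau,z}_\rho+\psi^{1}_\rho\big]\,\mathrm{d}W_\rho,
\]
and I would estimate the $H^{\gamma}_0$-norm of the three terms on the right-hand side separately.

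For the initial term, since $T-t-\tau\ge (T-t)/2>0$ for a.e.\ $t\in[s,T]$ and $\tau$ small, the smoothing estimate bounds $\|S(T-t-\tau)y^{\tau,z}_{t+\tau}\|_{H^{\gamma}_0}$ by $C\|y^{\tau,z}_{t+\tau}\|_{L^2(\Lambda)}$; taking $\mathbb{E}_t$, writing $y^{\tau,z}_{t+\tau}=z+\bar x_t-\bar x_{t+\tau}$, and invoking the time-regularity \eqref{timeregularityx} yields the desired $C(\tau+\|z\|_{L^2(\Lambda)}^2)$. For the drift convolution I would apply Minkowski's integral inequality followed by Cauchy--Schwarz against the weight $(T-\rho)^{-\gamma}$, which is integrable since $\gamma<1$; the boundedness of $b_x$ together with the a priori bound of \Cref{aprioriy} controls the $y^{\tau,z}_\rho$-contribution, while the remainder is handled by \eqref{order1} with $k=1$.

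The main obstacle is the stochastic convolution term. Here I would use the conditional It\^o isometry in the Hilbert space $H^{\gamma}_0(\Lambda)$,
\[
\mathbb{E}_t\Big[\Big\|\int_{t+\tau}^T S(T-\rho)\Phi_\rho\,\mathrm{d}W_\rho\Big\|_{H^{\gamma}_0}^2\Big] = \mathbb{E}_t\Big[\int_{t+\tau}^T \big\|A^{\gamma/2}S(T-\rho)\Phi_\rho\big\|_{L_2(\Xi,L^2(\Lambda))}^2\,\mathrm{d}\rho\Big],
\]
with $\Phi_\rho:=\sigma_x(\bar x_\rho,\bar u_\rho)y^{\tau,z}_\rho+\psi^{1}_\rho$, and then estimate $\|A^{\gamma/2}S(T-\rho)\Phi_\rho\|_{L_2(\Xi,L^2(\Lambda))}\le C(T-\rho)^{-\gamma/2}\|\Phi_\rho\|_{L_2(\Xi,L^2(\Lambda))}$. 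This is precisely where the restriction on $\gamma$ enters: the resulting time-singularity $(T-\rho)^{-\gamma}$ must stay integrable while still leaving room to absorb the weight against the higher-moment remainder estimates \eqref{order1} via H\"older's inequality, and $\gamma<1/4$ comfortably guarantees both this integrability and the function-space identification used above. Since $\sigma_x$ is bounded, $\|\Phi_\rho\|_{L_2(\Xi,L^2(\Lambda))}^2\le C(\|y^{\tau,z}_\rho\|_{L^2(\Lambda)}^2+\|\psi^{1}_\rho\|_{L_2(\Xi,L^2(\Lambda))}^2)$, so that the a priori bound of \Cref{aprioriy} and \eqref{order1} again close the estimate.

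Summing the three contributions gives the claimed bound. I would finally remark that the gained fractional regularity $\gamma>0$ is exactly what later furnishes, via the compact embedding $H^{\gamma}_0(\Lambda)\hookrightarrow L^2(\Lambda)$, the precompactness needed to extract convergent subsequences of $\{y^{\tau,z}_T\}$ as $\tau+\|z\|_{L^2(\Lambda)}^2\to 0$, which is the role this lemma plays in the proof of \Cref{spacetime}.
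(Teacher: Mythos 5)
Your proposal is correct and follows essentially the same route as the paper's proof: the mild (Duhamel) representation of $y^{\tau,z}_T$, the analytic smoothing estimate of the heat semigroup applied separately to the initial term, the drift convolution, and the stochastic convolution (via the conditional It\^o isometry), with \Cref{aprioriy} and the remainder estimates \eqref{order1} closing the bounds exactly as in the paper. The only differences are cosmetic: you write the convolution kernels correctly as $S(T-\rho)$ (the paper's $S_{r-t-\tau}$ is a slip that does not affect the integrability of the singular weight), and your Minkowski-then-Cauchy--Schwarz handling of the drift remainder lets you invoke \eqref{order1} with $k=1$ where the paper uses the $k=2$ moment.
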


\begin{proof}
	By Lemma \ref{variationalprocesstaylor} and Duhamel's formula, we have
	\begin{equation}
		\begin{split}
			y^{\tau,z}_T = &S_{T-t-\tau} (z+\bar x_t-\bar x_{t+\tau}) + \int_{t+\tau}^T S_{r-t-\tau} \left (b_x(\bar x_r, \bar u_r) y^{\tau,z}_r + \varphi^{1}_r \right ) \mathrm{d}r\\
			&+ \int_{t+\tau}^T S_{r-t-\tau} \left (\sigma_x(\bar x_r, \bar u_r) y^{\tau,z}_r + \psi^{1}_r \right ) \mathrm{d}W_r,
		\end{split}
	\end{equation}
	where $(S_r)_{r\geq 0}$ denotes the heat semigroup. Now, let us estimate the $H^{\gamma}_0(\Lambda)$-norm. For the term involving the initial condition, we have
	\begin{equation}
		\begin{split}
			\mathbb{E}^s_{\nu,t} \left [ \| S_{T-t-\tau}(z+\bar x_t -\bar x_{t+\tau}) \|_{H^{\gamma}_0(\Lambda)}^2 \right ] &\leq C \mathbb{E}^s_{\nu,t} \left [ \|z+\bar x_t- \bar x_{t+\tau} \|_{L^2(\Lambda)}^2 \right ]\\
			&\leq C \left ( \left \| z \right \|^{2}_{L^2(\Lambda)} + \mathbb{E}^s_{\nu,t} \left [ \left \| \bar x_{t+\tau} - \bar x_t \right \|^{2}_{L^2(\Lambda)} \right ] \right ).
		\end{split}
	\end{equation}
	Since
	\begin{equation}
		\mathbb{E}^s_{\nu,t} \left [ \left \| \bar x_{t+\tau} - \bar x_t \right \|^{2}_{L^2(\Lambda)} \right ] \leq C \tau,
	\end{equation}
	this term satisfies the required bound. Now let us get to the first integral. We have
	\begin{equation}
		\begin{split}
			&\mathbb{E}^s_{\nu,t} \left [ \left \| \int_{t+\tau}^T S_{r-t-\tau} \left (b_x(\bar x_r, \bar u_r) y^{\tau,z}_r \right ) \mathrm{d}r \right \|_{H^{\gamma}_0(\Lambda)}^2\right ]\\
			&\leq C \mathbb{E}^s_{\nu,t} \left [ \int_{t+\tau}^T \frac{1}{(r-t-\tau)^{2\gamma}} \left \| b_x(\bar x_r, \bar u_r) y^{\tau,z}_r \right \|_{L^2(\Lambda)}^2 \mathrm{d}r \right ]\\
			&\leq C \sup_{t+\tau\leq r\leq T} \mathbb{E}^s_{\nu,t} \left [ \left \| y^{\tau,z}_r \right \|_{L^2(\Lambda)}^2 \right ] \int_{t+\tau}^T \frac{1}{(r-t-\tau)^{2\gamma}} \mathrm{d}r.
		\end{split}
	\end{equation}
	The required bound now follows from Lemma \ref{aprioriy}. For the second part of the first integral, we have
	\begin{equation}
		\begin{split}
			\mathbb{E}^s_{\nu,t} \left [ \left \| \int_{t+\tau}^T S_{r-t-\tau} \varphi^{1}_r \mathrm{d}r \right \|_{H^{\gamma}_0(\Lambda)}^2\right ] &\leq C \mathbb{E}^s_{\nu,t} \left [ \int_{t+\tau}^T \frac{\left \| \varphi^{1}_r \right \|_{L^2(\Lambda)}^2}{(r-t-\tau)^{2\gamma}} \mathrm{d}r \right ]\\
			&\leq C \mathbb{E}^s_{\nu,t} \left [ \int_{t+\tau}^T \left \| \varphi^{1}_r \right \|_{L^2(\Lambda)}^4 \mathrm{d}r \right ]^{\frac12} \mathbb{E}^s_{\nu,t} \left [ \int_{t+\tau}^T \frac{1}{(r-t-\tau)^{4\gamma}} \mathrm{d}r \right ]^{\frac12}.
		\end{split}
	\end{equation}
	The required bound for this term follows from Lemma \ref{variationalprocesstaylor}. For the stochastic integral, we have
	\begin{equation}
		\begin{split}
			&\mathbb{E}^s_{\nu,t} \left [ \left \| \int_{t+\tau}^T S_{r-t-\tau} \left (\sigma_x(\bar x_r, \bar u_r) y^{\tau,z}_r + \psi^{1}_r \right ) \mathrm{d}W_r \right \|_{H^{\gamma}_0(\Lambda)}^2\right ]\\
			&= \mathbb{E}^s_{\nu,t} \left [ \int_{t+\tau}^T \left \| S_{r-t-\tau} \left (\sigma_x(\bar x_r, \bar u_r) y^{\tau,z}_r + \psi^{1}_r \right ) \right \|_{L_2(\Xi,H^{\gamma}_0(\Lambda))}^2 \mathrm{d}r \right ]\\
			&\leq \mathbb{E}^s_{\nu,t} \left [ \int_{t+\tau}^T \frac{C}{(r-t-\tau)^{2\gamma}} \left \| \sigma_x(\bar x_r, \bar u_r) y^{\tau,z}_r + \psi^{1}_r \right \|_{L_2(\Xi,L^2(\Lambda))}^2 \mathrm{d}r \right ].
		\end{split}
	\end{equation}
	Using the same argument as above yields the claim.
\end{proof}

\subsubsection{Duality Relations}\label{dualityrelations}

Now, we discuss the duality relations for the first and second order adjoint states, respectively.

\begin{lemma}\label{duality11}
	Let $y^{\tau,z}$ be the variational process given by \eqref{ysecondorder} and let $p$ be the first order adjoint state. Then it holds for almost every $t\in [s,T]$
	\begin{equation}
		\begin{split}
			&\mathbb{E}^s_{\nu,t} \left [ \int_{t+\tau}^T \langle l_x(\bar x_r, \bar u_r), y^{\tau,z}_r \rangle_{L^2(\Lambda)} \mathrm{d}r + \langle h_x(\bar x_T), y^{\tau,z}_T \rangle_{L^2(\Lambda)} \right ]\\
			&= \mathbb{E}^s_{\nu,t} \left [ \langle p_{t+\tau}, y^{\tau,z}_{t+\tau} \rangle_{L^2(\Lambda)} \right ]\\
			&\quad+ \mathbb{E}^s_{\nu,t} \left [ \frac12 \int_{t+\tau}^T \langle p_r, b_{xx}(\bar x_r, \bar u_r) y^{\tau,z}_r y^{\tau,z}_r \rangle_{L^2(\Lambda)} + \langle q_r, \sigma_{xx}(\bar x_r, \bar u_r) y^{\tau,z}_r y^{\tau,z}_r \rangle_{L_2(\Xi,L^2(\Lambda))} \mathrm{d}r \right ]\\
			&\quad+ \mathbb{E}^s_{\nu,t} \left [ \int_{t+\tau}^T \langle p_r , \varphi^{2}_r \rangle_{L^2(\Lambda)} + \langle q_r, \psi^{2}_r \rangle_{L_2(\Xi,L^2(\Lambda))} \mathrm{d}r \right ],
		\end{split}
	\end{equation}
	$\mathbb{P}$--almost surely, and for almost every $t\in [s,T]$
	\begin{equation}\label{remainderp}
		\mathbb{E}^s_{\nu,t} \left [ \int_{t+\tau}^T \langle p_r , \varphi^{2}_r \rangle_{L^2(\Lambda)} + \langle q_r, \psi^{2}_r \rangle_{L_2(\Xi,L^2(\Lambda))} \mathrm{d}r \right ] = o\left ( \tau + \| z \|_{L^2(\Lambda)}^2 \right )
	\end{equation}
	$\mathbb{P}$--almost surely.
\end{lemma}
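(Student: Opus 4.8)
The plan is to derive the identity by applying the It\^o product formula for variational processes to the pairing $\langle p_r, y^{\tau,z}_r\rangle_{L^2(\Lambda)}$, using the first order adjoint equation \eqref{firstadjoint} for $p$ and the second order Taylor form \eqref{ysecondorder} for $y^{\tau,z}$, and then integrating over $[t+\tau,T]$ and taking the conditional expectation $\mathbb{E}_t$. Since both processes are variational solutions on the Gelfand triple $H^1_0(\Lambda)\hookrightarrow L^2(\Lambda)\hookrightarrow H^{-1}(\Lambda)$, the Laplacian enters only through the pairing $\langle\cdot,\cdot\rangle_{H^{-1}(\Lambda)\times H^1_0(\Lambda)}$, and the product rule is admissible because $p\in L^2([s,T]\times\Omega;H^1_0(\Lambda))$ while $y^{\tau,z}$ has a martingale representation with $L_2(\Xi,L^2(\Lambda))$-valued integrand.

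Writing out the drift of $\mathrm d\langle p_r, y^{\tau,z}_r\rangle_{L^2(\Lambda)}$, I expect a cascade of cancellations. First, the two Laplacian contributions, namely $-\langle\Delta p_r, y^{\tau,z}_r\rangle_{H^{-1}(\Lambda)\times H^1_0(\Lambda)}=\langle\nabla p_r,\nabla y^{\tau,z}_r\rangle_{L^2(\Lambda)}$ from $\mathrm dp_r$ and $\langle p_r, \Delta y^{\tau,z}_r\rangle_{H^1_0(\Lambda)\times H^{-1}(\Lambda)}=-\langle\nabla p_r,\nabla y^{\tau,z}_r\rangle_{L^2(\Lambda)}$ from $\mathrm dy^{\tau,z}_r$, cancel by self-adjointness of $\Delta$. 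Second, the first order drift terms $-\langle b_x(\bar x_r,\bar u_r) p_r, y^{\tau,z}_r\rangle_{L^2(\Lambda)}$ and $\langle p_r, b_x(\bar x_r,\bar u_r) y^{\tau,z}_r\rangle_{L^2(\Lambda)}$ cancel because $b_x(\bar x_r,\bar u_r)$ is a Nemytskii (pointwise multiplication) operator, hence symmetric. Third, the term $-\langle\langle\sigma_x(\bar x_r,\bar u_r), q_r\rangle_{L_2(\Xi,\mathbb R)}, y^{\tau,z}_r\rangle_{L^2(\Lambda)}$ coming from the adjoint drift cancels exactly against the leading contribution $\langle q_r, \sigma_x(\bar x_r,\bar u_r) y^{\tau,z}_r\rangle_{L_2(\Xi,L^2(\Lambda))}$ of the quadratic covariation of the two martingale parts. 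What survives is $-\langle l_x(\bar x_r,\bar u_r), y^{\tau,z}_r\rangle_{L^2(\Lambda)}$ together with the second order terms $\tfrac12\langle p_r, b_{xx}(\bar x_r,\bar u_r)y^{\tau,z}_r y^{\tau,z}_r\rangle_{L^2(\Lambda)}+\tfrac12\langle q_r, \sigma_{xx}(\bar x_r,\bar u_r)y^{\tau,z}_r y^{\tau,z}_r\rangle_{L_2(\Xi,L^2(\Lambda))}$ and the remainder pairings $\langle p_r,\varphi^2_r\rangle_{L^2(\Lambda)}+\langle q_r, \psi^2_r\rangle_{L_2(\Xi,L^2(\Lambda))}$. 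Integrating, taking $\mathbb{E}_t$ so that the stochastic integrals drop out, and substituting the terminal condition $p_T=h_x(\bar x_T)$ then rearranges directly into the asserted identity.

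For the remainder estimate \eqref{remainderp}, I would bound $|\mathbb{E}_t[\int_{t+\tau}^T \langle p_r,\varphi^2_r\rangle_{L^2(\Lambda)}\,\mathrm dr]|$ by Cauchy--Schwarz in $\mathrm dr\otimes\mathbb{P}$, splitting off the factor $\mathbb{E}_t[\int_{t+\tau}^T\|p_r\|_{L^2(\Lambda)}^2\,\mathrm dr]^{1/2}$, which is controlled by the regularity $p\in L^2(\Omega;C([s,T];L^2(\Lambda)))$, and the factor $\mathbb{E}_t[\int_{t+\tau}^T\|\varphi^2_r\|_{L^2(\Lambda)}^2\,\mathrm dr]^{1/2}$, which by \Cref{variationalprocesstaylor} (estimate \eqref{order2} with $k=2$) is $o(\tau+\|z\|_{L^2(\Lambda)}^2)$ since $(\tau^2+\|z\|_{L^2(\Lambda)}^4)^{1/2}\le\tau+\|z\|_{L^2(\Lambda)}^2$. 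The term involving $\langle q_r,\psi^2_r\rangle_{L_2(\Xi,L^2(\Lambda))}$ is handled identically using $q\in L^2([s,T]\times\Omega;L_2(\Xi,L^2(\Lambda)))$, which yields the claimed $o(\tau+\|z\|_{L^2(\Lambda)}^2)$.

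The main obstacle is the rigorous justification of the It\^o product formula in the variational framework: one must verify that the integrands in \eqref{firstadjoint} and \eqref{ysecondorder} possess the regularity demanded by the variational It\^o formula, and that the resulting stochastic integrals are genuine martingales so that they vanish under $\mathbb{E}_t$. The a priori bounds of \Cref{aprioriy} and the remainder estimates of \Cref{variationalprocesstaylor}, together with the stated integrability of $(p,q)$, furnish exactly the integrability required; once these are secured, the algebraic cancellations described above constitute the substance of the argument.
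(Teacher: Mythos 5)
Your proposal is correct and follows essentially the same route as the paper: the paper likewise applies It\^o's formula for variational solutions to $\langle y^{\tau,z}_r, p_r\rangle_{L^2(\Lambda)}$, takes the conditional expectation so the stochastic integrals vanish, and then establishes \eqref{remainderp} via Cauchy--Schwarz, the finiteness of $\mathbb{E}_t \bigl[ \int_{t+\tau}^T \| p_r \|_{L^2(\Lambda)}^2 \mathrm{d}r \bigr]^{1/2}$ and $\mathbb{E}_t \bigl[ \int_{t+\tau}^T \| q_r \|_{L_2(\Xi,L^2(\Lambda))}^2 \mathrm{d}r \bigr]^{1/2}$, and the remainder estimates \eqref{order2}. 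Your explicit accounting of the cancellations (Laplacian terms by symmetry, the $b_x$ terms by symmetry of the Nemytskii multiplication, and the $\langle \sigma_x, q\rangle$ drift term against the quadratic covariation) is exactly the algebra the paper leaves implicit in the phrase ``yields the claim.''
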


\begin{proof}
	Applying It\^o's formula for variational solutions to SPDEs (see \cite[Lemma 2.15]{pardoux2021} or \cite[Section 3]{krylov2013}) to $\langle y^{\tau,z}_r, p_r \rangle_{L^2(\Lambda)}$ and taking the conditional expectation yields the claim.
	
	For the remainder term estimate, we observe
	\begin{equation}
		\begin{split}
			&\mathbb{E}^s_{\nu,t} \left [ \int_{t+\tau}^T \langle p_r , \varphi^{2}_r \rangle_{L^2(\Lambda)} + \langle q_r, \psi^{2}_r \rangle_{L_2(\Xi,L^2(\Lambda))} \mathrm{d}r \right ]\\
			&\leq \mathbb{E}^s_{\nu,t} \left [ \int_{t+\tau}^T \| p_r \|_{L^2(\Lambda)} \| \varphi^{2}_r \|_{L^2(\Lambda)} + \| q_r \|_{L_2(\Xi,L^2(\Lambda))} \| \psi^{2}_r \|_{L_2(\Xi,L^2(\Lambda))} \mathrm{d}r \right ]\\
			&\leq \mathbb{E}^s_{\nu,t} \left [ \int_{t+\tau}^T \| p_r \|_{L^2(\Lambda)}^2 \mathrm{d}r \right ]^{\frac12} \mathbb{E}^s_{\nu,t} \left [ \int_{t+\tau}^T \| \varphi^2_r \|_{L^2(\Lambda)}^2 \mathrm{d}r \right ]^{\frac12}\\
			&\qquad\qquad + \mathbb{E}^s_{\nu,t} \left [ \int_{t+\tau}^T \| q_r \|_{L_2(\Xi,L^2(\Lambda))}^2 \mathrm{d}r \right ]^{\frac12} \mathbb{E}^s_{\nu,t} \left [ \int_{t+\tau}^T \| \psi^2_r \|_{L_2(\Xi,L^2(\Lambda))}^2 \mathrm{d}r \right ]^{\frac12}
		\end{split}
	\end{equation}
	Since the first factor is finite in each case, the claim follows from the remainder estimates \eqref{order2}.
\end{proof}

\begin{lemma}\label{duality22}
	Let $y^{\tau,z}$ be the process given by equation \eqref{zerothorder1}, and let $P^{\eta}$ be the mollified second order adjoint state. Then it holds for almost every $t\in [s,T]$
	\begin{equation}
		\begin{split}
			&\mathbb{E}^s_{\nu,t} \left [ \int_{t+\tau}^T \int_{\Lambda} (l_{xx}(\bar x_r(\lambda), \bar u_r) + b_{xx}(\bar x_r(\lambda), \bar u_r) p_r(\lambda)) y^{\tau,z}_r(\lambda) y^{\tau,z}_r(\lambda) \mathrm{d}\lambda \mathrm{d}r \right ]\\
			&\quad + \mathbb{E}^s_{\nu,t} \left [ \int_{t+\tau}^T \int_{\Lambda} \langle \sigma_{xx}(\bar x_r(\lambda), \bar u_r),q_r \rangle_{L_2(\Xi,\mathbb{R})} y^{\tau,z}_r(\lambda) y^{\tau,z}_r(\lambda) \mathrm{d}\lambda \mathrm{d}r \right ]\\
			&\quad + \mathbb{E}^s_{\nu,t} \left [ \int_{\Lambda^2} h^{\eta}_{xx}(\lambda,\mu) y^{\tau,z}_T(\lambda) y^{\tau,z}_T(\mu) \mathrm{d}\lambda \mathrm{d}\mu \right ]\\
			&= \mathbb{E}^s_{\nu,t} \left [ \int_{\Lambda^2} P^{\eta}_{t+\tau}(\lambda,\mu) y^{\tau,z}_{t+\tau}(\lambda) y^{\tau,z}_{t+\tau}(\mu) \mathrm{d}\lambda \mathrm{d}\mu \right ] \\
			&\quad + \mathbb{E}^s_{\nu,t} \left [ \int_{t+\tau}^T \langle P^{\eta}_r, \Phi^{\tau,z}_r \rangle_{L^2(\Lambda^2)} + \langle Q^{\eta}_r, \Psi^{\tau,z}_r \rangle_{L_2(\Xi,L^2(\Lambda^2))} \mathrm{d}r \right ],
		\end{split}
	\end{equation}
	$\mathbb{P}$--almost surely, where
	\begin{equation}
		(\Phi^{\tau,z},\Psi^{\tau,z}) \in L^2([s,T]\times\Omega; L^2(\Lambda^2)) \times L^2([s,T]\times \Omega; L_2(\Xi,L^2(\Lambda^2)))
	\end{equation}
	are given by
	\begin{equation}
		\begin{split}
			\Phi^{\tau,z}_r(\lambda,\mu) :=& y^{\tau,z}_r(\lambda) \varphi^{1}_r(\mu) + y^{\tau,z}_r(\mu) \varphi^{1}_r(\lambda)\\
			&+ \sigma_x(\bar x_r(\lambda), \bar u_r) y^{\tau,z}_r(\lambda) \psi^{1}_r(\mu) + \sigma_x(\bar x_r(\mu), \bar u_r) y^{\tau,z}_r(\mu) \psi^{1}_r(\lambda)\\
			&+ \psi^{1}_r(\lambda) \psi^{1}_r(\mu),
		\end{split}
	\end{equation}
	and
	\begin{equation}
		\Psi^{\tau,z}_r(\lambda,\mu) := y^{\tau,z}_r(\lambda) \psi^{1}_r(\mu) + y^{\tau,z}_r(\mu) \psi^{1}_r(\lambda).
	\end{equation}
	Furthermore, we have for almost every $t\in [s,T]$
	\begin{equation}\label{remainderP}
		\mathbb{E}^s_{\nu,t} \left [ \int_{t+\tau}^T \langle P^{\eta}_r , \Phi^{\tau,z}_r \rangle_{L^2(\Lambda^2)} + \langle Q^{\eta}_r , \Psi^{\tau,z}_r \rangle_{L_2(\Xi,L^2(\Lambda^2))} \mathrm{d}r \right ] = o\left ( \tau + \| z \|_{L^2(\Lambda)}^2 \right )
	\end{equation}
	$\mathbb{P}$--almost surely.
\end{lemma}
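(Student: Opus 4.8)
The plan is to mirror the proof of \Cref{duality11}, but now on the product domain $\Lambda^2$ and with the mollified second order adjoint in place of $p$. Concretely, I would apply It\^o's formula for variational solutions (as in \cite[Lemma 2.15]{pardoux2021} or \cite[Section 3]{krylov2013}) to the scalar process
\[
r \longmapsto \langle P^{\eta}_r, y^{\tau,z}_r\otimes y^{\tau,z}_r\rangle_{L^2(\Lambda^2)} = \int_{\Lambda^2} P^{\eta}_r(\lambda,\mu) y^{\tau,z}_r(\lambda) y^{\tau,z}_r(\mu)\,\mathrm{d}\lambda\,\mathrm{d}\mu
\]
on $[t+\tau,T]$, and then take the conditional expectation $\mathbb{E}_t$ so that the resulting stochastic integral vanishes. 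This is precisely where the mollification is used: by \Cref{aprioriy} the variational process $y^{\tau,z}_r$ lies in $H^1_0(\Lambda)$ for almost every $r$, so the rank-one tensor $y^{\tau,z}_r\otimes y^{\tau,z}_r$ lies in $H^1_0(\Lambda^2)$, and $P^{\eta}$ — unlike the unmollified $P$, which takes values only in $H^{-1}(\Lambda^2)$ — is a genuine variational process on the Gelfand triple $H^1_0(\Lambda^2)\hookrightarrow L^2(\Lambda^2)\hookrightarrow H^{-1}(\Lambda^2)$. Thus the pairing of two variational solutions falls within the scope of the cited It\^o formula.

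First I would record the It\^o dynamics of the tensor $Y_r:=y^{\tau,z}_r\otimes y^{\tau,z}_r$. Applying the chain rule to the bounded quadratic map $a\mapsto a\otimes a$ and inserting \eqref{zerothorder1}, the drift of $Y_r$ is
\[
\Delta_{\Lambda^2} Y_r + \big(b_x(\bar x_r(\lambda),\bar u_r) + b_x(\bar x_r(\mu),\bar u_r)\big) Y_r + \langle \sigma_x(\bar x_r(\lambda),\bar u_r), \sigma_x(\bar x_r(\mu),\bar u_r)\rangle_{L_2(\Xi,\mathbb{R})} Y_r + \Phi^{\tau,z}_r,
\]
with $\Delta_{\Lambda^2}=\Delta_\lambda+\Delta_\mu$, and its diffusion coefficient is $\big(\sigma_x(\bar x_r(\lambda),\bar u_r)+\sigma_x(\bar x_r(\mu),\bar u_r)\big)Y_r + \Psi^{\tau,z}_r$. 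Here the $\langle\sigma_x(\lambda),\sigma_x(\mu)\rangle$ term and the cross terms gathered into $\Phi^{\tau,z}_r$ arise from the quadratic variation of the stochastic integral in \eqref{zerothorder1}, and one checks that the leftover first-order contributions with a factor $\varphi^1_r$ or $\psi^1_r$ are exactly the stated definitions of $\Phi^{\tau,z}$ and $\Psi^{\tau,z}$.

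The main point will be that, when these dynamics are paired against $P^{\eta}$ via the backward equation \eqref{mollifiedsecondorderadjoint}, four families of terms cancel. The Laplacian terms cancel because $\langle\Delta_{\Lambda^2} P^{\eta}_r, Y_r\rangle = \langle P^{\eta}_r, \Delta_{\Lambda^2} Y_r\rangle$ (self-adjointness of the Dirichlet Laplacian on $\Lambda^2$, legitimate since both $P^{\eta}_r$ and $Y_r$ lie in $H^1_0(\Lambda^2)$); the $b_x$ terms, the $\langle\sigma_x,\sigma_x\rangle P^{\eta}$ terms, and the $Q^{\eta}$–$\sigma_x$ cross-variation terms each cancel against the identically shaped contributions in the adjoint drift. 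What survives from the adjoint drift are the three source terms $\delta^{\ast}(l_{xx})$, $\delta^{\ast}(b_{xx}p)$, $\delta^{\ast}(\langle\sigma_{xx},q\rangle)$; pairing each with $Y_r$ and using the defining duality for $\delta^{\ast}$ evaluates $Y_r$ on the diagonal, producing the integrals $\int_{\Lambda}(\cdots)\,y^{\tau,z}_r(\lambda) y^{\tau,z}_r(\lambda)\,\mathrm{d}\lambda$ on the left-hand side. Together with the terminal value $P^{\eta}_T=h^{\eta}_{xx}$, the initial value at time $t+\tau$, and the residual terms $\langle P^{\eta}_r,\Phi^{\tau,z}_r\rangle + \langle Q^{\eta}_r,\Psi^{\tau,z}_r\rangle$, taking $\mathbb{E}_t$ yields the asserted identity.

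For the remainder estimate \eqref{remainderP}, I would apply Cauchy–Schwarz in $(\omega,r)$ to peel off the finite factors $\mathbb{E}_t[\int\|P^{\eta}_r\|^2_{L^2(\Lambda^2)}\,\mathrm{d}r]^{1/2}$ and $\mathbb{E}_t[\int\|Q^{\eta}_r\|^2_{L_2(\Xi,L^2(\Lambda^2))}\,\mathrm{d}r]^{1/2}$, then bound $\|\Phi^{\tau,z}_r\|_{L^2(\Lambda^2)}$ and $\|\Psi^{\tau,z}_r\|_{L_2(\Xi,L^2(\Lambda^2))}$ by products of $\|y^{\tau,z}_r\|_{L^2(\Lambda)}$ (and the bounded $\sigma_x$) with $\|\varphi^1_r\|_{L^2(\Lambda)}$, $\|\psi^1_r\|_{L_2(\Xi,L^2(\Lambda))}$, or their squares; a further Cauchy–Schwarz combined with the a priori bound \eqref{aprioriy2} for $\sup_r\|y^{\tau,z}_r\|^4$ and the second-order remainder estimates \eqref{order1} (both with $k=2$) for $\int\|\varphi^1_r\|^4$, $\int\|\psi^1_r\|^4$ gives $\mathbb{E}_t[\int\|\Phi^{\tau,z}_r\|^2]$ and $\mathbb{E}_t[\int\|\Psi^{\tau,z}_r\|^2]$ of order $o\big((\tau+\|z\|_{L^2(\Lambda)}^2)^2\big)$, whence the remainder is $o(\tau+\|z\|_{L^2(\Lambda)}^2)$. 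I expect the genuine obstacle to be not the bookkeeping of the four cancellations or the $\delta^{\ast}$-duality, but the rigorous justification that $y^{\tau,z}\otimes y^{\tau,z}$ is a bona fide variational solution on $\Lambda^2$ with the drift and diffusion above, and that its pairing with $P^{\eta}$ is admissible for the cited It\^o formula; once the mollification secures $P^{\eta}_r,\,Y_r\in H^1_0(\Lambda^2)$, the remaining steps are routine.
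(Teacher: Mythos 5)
Your proposal is correct and follows essentially the same route as the paper: you derive the tensor equation for $Y^{\tau,z}_r = y^{\tau,z}_r\otimes y^{\tau,z}_r$ (the paper's equation \eqref{secondvariationalequation}), pair it with the mollified adjoint $P^{\eta}$ via the variational It\^o formula precisely because $P$ itself lacks the regularity for this pairing, exploit the cancellations and the $\delta^{\ast}$-duality to obtain the identity, and establish the remainder estimate \eqref{remainderP} by Cauchy--Schwarz together with \Cref{aprioriy} and \eqref{order1}. The only cosmetic difference is that you spell out the cancellation bookkeeping and the $k=2$ exponents explicitly, which the paper leaves implicit.
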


\begin{proof}
	In order to invoke the second order adjoint state, we use the same idea as in the proof of Peng's maximum principle for SPDEs, see \cite{stannat2020}. We rewrite the quadratic terms in $y^{\tau,z}$ in the following way
	\begin{equation}\label{delta}
		\begin{split}
			\langle p_r , b_{xx}(\bar x_r, \bar u_r) y^{\tau,z}_r y^{\tau,z}_r \rangle_{L^2(\Lambda)} &= \int_{\Lambda} p_r(\lambda) b_{xx}(\bar x_r(\lambda), \bar u_r) y^{\tau,z}_r(\lambda) y^{\tau,z}_r(\lambda)\mathrm{d}\lambda \\
			&= \int_{\Lambda} p_r(\lambda) b_{xx}(\bar x_r(\lambda), \bar u_r) \delta(Y^{\tau,z}_r)(\lambda) \mathrm{d}\lambda,
		\end{split}
	\end{equation}
	where $Y^{\tau,z}_r(\lambda,\mu):= y^{\tau,z}_r(\lambda) y^{\tau,z}_r(\mu)$ and $\delta$ is the operator introduced in Section \ref{adjointequations}.
	
	Next, let us derive the equation for $Y^{\tau,z}$. Similar to the calculation in \cite{stannat2020}, we have
	\begin{equation}\label{secondvariationalequation}
		\begin{cases}
			\mathrm{d}Y^{\tau,z}_r(\lambda,\mu) = [\Delta Y^{\tau,z}_r(\lambda,\mu) + ( b_{x}(\bar x_r(\lambda),\bar u_r) + b_{x}(\bar x_r(\mu),\bar u_r) ) Y^{\tau,z}_r(\lambda,\mu)\\
			\qquad\qquad\qquad + \langle \sigma_{x}(\bar x_r(\lambda),\bar u_r), \sigma_{x}(\bar x_r(\mu),\bar u_r) \rangle_{L_2(\Xi,\mathbb{R})} Y^{\tau,z}_r(\lambda,\mu) + \Phi^{\tau,z}_r(\lambda,\mu) ]\mathrm{d}r\\
			\qquad\qquad\qquad + [ ( \sigma_{x}(\bar x_r(\lambda),\bar u_r) + \sigma_{x}(\bar x_r(\mu),\bar u_r) ) Y^{\tau,z}_r(\lambda,\mu) + \Psi^{\tau,z}_r(\lambda,\mu) ] \mathrm{d}W_r\\
			Y^{\tau,z}_{t+\tau} =(z+\bar x_t -\bar x_{t+\tau}) \otimes (z+\bar x_t-\bar x_{t+\tau}).
		\end{cases}
	\end{equation}
	We want to apply It\^o's formula to the product $\langle Y^{\tau,z}_r,P_r \rangle_{L^2(\Lambda^2)}$. Since $P$ itself is not sufficiently regular, we take the mollified second order adjoint process given by equation \eqref{mollifiedsecondorderadjoint}, instead. Applying It\^o's formula to $\langle Y^{\tau,z}_r,P^{\eta}_r \rangle_{L^2(\Lambda^2)}$ yields the duality relation.
	
	For the first term in the remainder estimate, we observe
	\begin{equation}
		\mathbb{E}^s_{\nu,t} \left [ \int_{t+\tau}^T \langle P^{\eta}_r , \Phi^{\tau,z}_r \rangle_{L^2(\Lambda^2)} \mathrm{d}r \right ]\leq \mathbb{E}^s_{\nu,t} \left [ \int_{t+\tau}^T \| P^{\eta}_r \|_{L^2(\Lambda^2)}^2 \mathrm{d}r \right ]^{\frac12} \mathbb{E}^s_{\nu,t} \left [ \int_{t+\tau}^T \| \Phi^{\tau,z}_r \|_{L^2(\Lambda^2)}^2 \mathrm{d}r \right ]^{\frac12}.
	\end{equation}
	Since the first factor is finite, the claim follows from the a priori estimates in Lemma \ref{aprioriy} and \eqref{order1}.
	
	For the second term in the remainder estimate, we have
	\begin{equation}\label{qestimate}
		\begin{split}
			& \mathbb{E}^s_{\nu,t} \left [ \int_{t+\tau}^T \langle Q^{\eta}_r , \Psi^{\tau,z}_r \rangle_{L_2(\Xi,L^2(\Lambda^2))} \mathrm{d}r \right ]\\
			&\leq \mathbb{E}^s_{\nu,t} \left [ \int_{t+\tau}^T \| Q^{\eta}_r \|_{L_2(\Xi,L^2(\Lambda^2))}^2 \mathrm{d}r \right ]^{\frac12} \mathbb{E}^s_{\nu,t} \left [ \int_{t+\tau}^T \| \Psi^{\tau,z}_r \|_{L_2(\Xi,L^2(\Lambda^2))}^2 \mathrm{d}r \right ]^{\frac12}
		\end{split}
	\end{equation}
	Since
	\begin{equation}
		\begin{split}
			\| \Psi^{\tau,z}_r \|^2_{L_2(\Xi,L^2(\Lambda^2))} &= \| y^{\tau,z}_r \otimes \psi^1_r + \psi^1_r \otimes y^{\tau,z}_r \|^2_{L_2(\Xi,L^2(\Lambda^2))} \\
			&= \| y^{\tau,z}_r \|_{L^2(\Lambda)}^2 \| \psi^1_r \|_{L_2(\Xi,L^2(\Lambda))}^2 + \| y^{\tau,z}_r \|_{L^2(\Lambda)}^2 \| \psi^1_r \|_{L_2(\Xi,L^2(\Lambda))}^2,
		\end{split}
	\end{equation}
	the claim follows again from Lemma \ref{aprioriy} and \eqref{order1}.
\end{proof}

\subsubsection{Time-Increments}\label{timeincrements}

The next two lemmata address the time increment.

\begin{lemma}\label{firsttimeincrement}
	It holds for almost every $t\in [s,T]$
	\begin{equation}
		\begin{split}
			&\mathbb{E}^s_{\nu,t} \left [ \left \langle p_{t+\tau}, \bar x_{t+\tau}-\bar x_t \right \rangle_{L^2(\Lambda)} \right ]\\
			&= \tau \mathbb{E}^s_{\nu,t} \left [ \left \langle p_t, \Delta\bar x_t + b(\bar x_t, \bar u_t) \right \rangle_{H^1_0(\Lambda)\times H^{-1}(\Lambda)} + \left \langle q_t ,\sigma(\bar x_t, \bar u_t) \right \rangle_{L_2(\Xi,L^2(\Lambda))} \right ]+ o(\tau),
		\end{split}
	\end{equation}
	$\tau>0$, $\mathbb{P}$--almost surely.
\end{lemma}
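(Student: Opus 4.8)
The plan is to apply the It\^o product formula for variational solutions (the same one invoked in the proof of \Cref{duality11}) to the real-valued process $r\mapsto\langle p_r,\bar x_r-\bar x_t\rangle_{L^2(\Lambda)}$ on $[t,t+\tau]$. This process vanishes at $r=t$, and the stochastic integrals appearing in the expansion are martingales on $[t,t+\tau]$ with zero $\mathbb{E}_t$-conditional expectation. Inserting the drift of $p$ from \eqref{firstadjoint}, the drift of $\bar x$ from \eqref{stateequation}, and the It\^o cross-variation $\langle q_r,\sigma(\bar x_r,\bar u_r)\rangle_{L_2(\Xi,L^2(\Lambda))}$, taking $\mathbb{E}_t$ leaves
\begin{equation}
\mathbb{E}_t\big[\langle p_{t+\tau},\bar x_{t+\tau}-\bar x_t\rangle\big]=\mathbb{E}_t\Big[\int_t^{t+\tau}\Big(I^{\Delta}_r+\langle p_r,b(\bar x_r,\bar u_r)\rangle_{L^2}+\langle q_r,\sigma(\bar x_r,\bar u_r)\rangle_{L_2}-\langle R_r,\bar x_r-\bar x_t\rangle_{L^2}\Big)\mathrm{d}r\Big],
\end{equation}
where $R_r:=b_x(\bar x_r,\bar u_r)p_r+\langle\sigma_x(\bar x_r,\bar u_r),q_r\rangle_{L_2(\Xi,\mathbb{R})}+l_x(\bar x_r,\bar u_r)$ gathers the zeroth-order part of the adjoint drift, and $I^{\Delta}_r$ denotes the two Laplacian contributions.

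The first key step is the treatment of $I^{\Delta}_r$. Pairing $-\Delta p_r$ against $\bar x_r-\bar x_t$ directly is not viable, since the time-regularity of $\bar x$ in $H^1_0(\Lambda)$ is too weak to control $\|\bar x_r-\bar x_t\|_{H^1_0(\Lambda)}$. Instead I would integrate by parts, exploiting the self-adjointness of $\Delta$ under homogeneous Dirichlet boundary conditions, so that the two Laplacian terms recombine and the dangerous increment cancels:
\begin{equation}
I^{\Delta}_r=-\langle\Delta p_r,\bar x_r-\bar x_t\rangle_{H^{-1}\times H^1_0}+\langle p_r,\Delta\bar x_r\rangle_{H^1_0\times H^{-1}}=-\langle\nabla p_r,\nabla\bar x_t\rangle_{L^2(\Lambda)}=\langle p_r,\Delta\bar x_t\rangle_{H^1_0\times H^{-1}},
\end{equation}
leaving the Laplacian frozen at time $t$. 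For the remainder I would use Cauchy--Schwarz in $(r,\omega)$ together with the time-regularity estimate \eqref{timeregularityx}, namely $\mathbb{E}_t\|\bar x_r-\bar x_t\|_{L^2}^2\le C(r-t)$, and the square-integrability of $p$, $q$ and of the coefficients:
\[
\mathbb{E}_t\Big[\int_t^{t+\tau}|\langle R_r,\bar x_r-\bar x_t\rangle_{L^2}|\,\mathrm{d}r\Big]\le\Big(\mathbb{E}_t\int_t^{t+\tau}\|R_r\|_{L^2}^2\,\mathrm{d}r\Big)^{\!\frac12}\Big(\mathbb{E}_t\int_t^{t+\tau}\|\bar x_r-\bar x_t\|_{L^2}^2\,\mathrm{d}r\Big)^{\!\frac12}.
\]
Here the first factor tends to $0$ as $\tau\downarrow0$ (conditional dominated convergence applied to the tail of an integrable quantity) while the second is $O(\tau)$, so the whole contribution is $o(\tau)$.

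It then remains to pass to the limit in the leading integrand, i.e.\ to show that for almost every $t$
\begin{equation}
\mathbb{E}_t\Big[\int_t^{t+\tau}\!\big(\langle p_r,\Delta\bar x_t\rangle+\langle p_r,b(\bar x_r,\bar u_r)\rangle+\langle q_r,\sigma(\bar x_r,\bar u_r)\rangle\big)\mathrm{d}r\Big]=\tau\big(\langle p_t,\Delta\bar x_t+b(\bar x_t,\bar u_t)\rangle+\langle q_t,\sigma(\bar x_t,\bar u_t)\rangle\big)+o(\tau).
\end{equation}
This is where I expect the main obstacle to lie: the processes $r\mapsto\nabla p_r$ and $r\mapsto q_r$ possess only $L^2$-in-time integrability and no pathwise time-continuity, so the averaging $\tfrac1\tau\int_t^{t+\tau}(\cdot)\,\mathrm{d}r\to(\cdot)_t$ cannot be justified by continuity. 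The tool I would invoke is a conditional version of the Lebesgue differentiation theorem: for an integrable, progressively measurable process $\Phi$ with values in a separable Hilbert space, $\tfrac1\tau\mathbb{E}_t[\int_t^{t+\tau}\Phi_r\,\mathrm{d}r]\to\Phi_t$ for almost every $t$, $\mathbb{P}$-almost surely. Applying it to $\Phi_r=\nabla p_r$ (then pairing with the fixed, $\mathcal{F}^s_{\nu,t}$-measurable element $\nabla\bar x_t\in L^2(\Lambda)$) and to the scalar process $r\mapsto\langle p_r,b(\bar x_r,\bar u_r)\rangle+\langle q_r,\sigma(\bar x_r,\bar u_r)\rangle$ yields the claimed leading term, the Laplacian and drift pieces recombining as $\langle p_t,\Delta\bar x_t\rangle+\langle p_t,b(\bar x_t,\bar u_t)\rangle=\langle p_t,\Delta\bar x_t+b(\bar x_t,\bar u_t)\rangle_{H^1_0\times H^{-1}}$. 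Establishing this conditional differentiation at almost every $t$ is the crux of the argument and is also the origin of the ``almost every $t$'' in the statement; the rest is bookkeeping.
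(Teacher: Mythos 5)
Your proposal is correct and takes essentially the same route as the paper's proof: It\^o's product formula for variational solutions on $[t,t+\tau]$, recombination of the two Laplacian terms by self-adjointness so that only the frozen pairing $\langle \Delta p_r, \bar x_t\rangle_{H^{-1}(\Lambda)\times H^1_0(\Lambda)}$ survives, Cauchy--Schwarz together with the time regularity \eqref{timeregularityx} to dispose of the $\langle R_r,\bar x_r-\bar x_t\rangle$ remainder as $o(\tau)$, and a (conditional) Lebesgue differentiation argument for the leading terms. The only cosmetic difference is that the paper carries out the differentiation step by estimating $\mathbb{E}_t\big[\tfrac1\tau\int_t^{t+\tau}\|p_r-p_t\|_{H^1_0(\Lambda)}^2\,\mathrm{d}r\big]$ and passing to a subsequence, which is the same device you invoke as a conditional Lebesgue differentiation theorem.
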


\begin{proof}
	Applying It\^o's formula for variational solutions to SPDEs and taking the conditional expectation, we obtain
	\begin{equation}\label{ptimeincrement}
		\begin{split}
			& \mathbb{E}^s_{\nu,t} \left [ \left \langle p_{t+\tau}, \bar x_{t+\tau}-\bar x_t \right \rangle_{L^2(\Lambda)} \right ]\\
			&= \mathbb{E}^s_{\nu,t} \left [ \int_t^{t+\tau} \left \langle \Delta p_r, \bar x_t \right \rangle_{H^{-1}(\Lambda) \times H^1_0(\Lambda)} + \left \langle p_r, b(\bar x_r, \bar u_r) \right \rangle_{L^2(\Lambda)} + \langle \sigma(\bar x_r, \bar u_r), q_r \rangle_{L_2(\Xi,L^2(\Lambda))} \mathrm{d}r \right ]\\
			&\quad - \mathbb{E}^s_{\nu,t} \left [ \int_t^{t+\tau} \left \langle (\bar x_r - \bar x_t), b_x(\bar x_r,\bar u_r) p_r + \langle \sigma_x(\bar x_r,\bar u_r), q_r \rangle + l_x(\bar x_r,\bar u_x) \right \rangle_{L^2(\Lambda)} \mathrm{d}r \right ].
		\end{split}
	\end{equation}
	Note that the stochastic integrals vanish under the expectation. For the first term, we have
	\begin{equation}
		\begin{split}
			&\left | \mathbb{E}^s_{\nu,t} \left [ \frac{1}{\tau} \int_t^{t+\tau} \langle \Delta ( p_r - p_t ), \bar x_t \rangle_{H^{-1}(\Lambda) \times H^1_0(\Lambda)} \mathrm{d}r \right ] \right |\\
			&\leq \mathbb{E}^s_{\nu,t} \left [ \frac{1}{\tau} \int_t^{t+\tau} \| p_r-p_t \|_{H^1_0(\Lambda)} \| \bar x_t \|_{H^1_0(\Lambda)} \mathrm{d}r \right ]\\
			&\leq \mathbb{E}^s_{\nu,t} \left [ \frac{1}{\tau} \int_t^{t+\tau} \| p_r-p_t \|_{H^1_0(\Lambda)}^2 \mathrm{d}r \right ]^{\frac12} \| \bar x_t \|_{H^1_0(\Lambda)}.
		\end{split}
	\end{equation}
	By Lebesgue's differentiation theorem, we have for almost every $t\in [s,T]$
	\begin{equation}
		\frac{1}{\tau} \int_t^{t+\tau} \mathbb{E}^s_{\nu,t} \left [ \| p_r-p_t \|_{H^1_0(\Lambda)}^2 \right ] \mathrm{d}r \to 0
	\end{equation}
	$\mathbb{P}$--almost surely. Hence, we obtain for almost every $t\in [s,T]$
	\begin{equation}
		\mathbb{E}^s_{\nu,t} \left [ \frac{1}{\tau} \int_t^{t+\tau} \langle p_r, \bar x_t \rangle_{H^{-1}(\Lambda) \times H^1_0(\Lambda)} \mathrm{d}r \right ] \to \langle p_t ,\Delta \bar x_t \rangle_{H^1_0(\Lambda)\times H^{-1}(\Lambda)}
	\end{equation}
	$\mathbb{P}$--almost surely along some subsequence. Arguing similarly for the second and third term of equation \eqref{ptimeincrement} and noticing that the last line is of order $o(\tau)$ concludes the proof.
\end{proof}

\begin{lemma}\label{secondtimeincrement}
	It holds for almost every $t\in [s,T]$
	\begin{equation}
		\begin{split}
			&\mathbb{E}^s_{\nu,t} \left [ \int_{\Lambda^2} P^{\eta}_{t+\tau}(\lambda,\mu) (\bar x_{t+\tau}-\bar x_t)(\lambda) (\bar x_{t+\tau}-\bar x_t)(\mu) \mathrm{d}\lambda \mathrm{d}\mu \right ]\\
			&= \tau \mathbb{E}^s_{\nu,t} \left [ \int_{\Lambda^2} P^{\eta}_t(\lambda,\mu) \langle \sigma(\bar x_t(\lambda),\bar u_t), \sigma(\bar x_t(\mu),\bar u_t)\rangle_{L_2(\Xi,\mathbb{R})} \mathrm{d}\lambda \mathrm{d}\mu \right ] + o(\tau),
		\end{split}
	\end{equation}
	$\tau>0$, $\mathbb{P}$--almost surely.
\end{lemma}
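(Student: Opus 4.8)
The plan is to mirror the proof of \Cref{firsttimeincrement}, but now for the \emph{quadratic} increment, by applying It\^o's formula for variational solutions to the pairing of the mollified second order adjoint state with the tensor square of the state increment. Write $\xi_r := \bar x_r - \bar x_t$ for $r\in[t,t+\tau]$; then $\xi_t = 0$ and, since $\bar x_t$ is constant in $r$, the increment $\xi$ has the same dynamics as $\bar x$, namely $\mathrm d\xi_r = [\Delta\bar x_r + b(\bar x_r,\bar u_r)]\,\mathrm dr + \sigma(\bar x_r,\bar u_r)\,\mathrm dW_r$. Set $\Xi_r := \xi_r\otimes\xi_r$, i.e.\ $\Xi_r(\lambda,\mu) = \xi_r(\lambda)\xi_r(\mu)$. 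Exactly as in the derivation of \eqref{secondvariationalequation}, It\^o's product rule shows that $\Xi$ is a variational solution on the Gelfand triple $H^1_0(\Lambda^2)\hookrightarrow L^2(\Lambda^2)\hookrightarrow H^{-1}(\Lambda^2)$ with drift
\begin{multline*}
\xi_r(\mu)\left[\Delta\bar x_r(\lambda) + b(\bar x_r(\lambda),\bar u_r)\right] + \xi_r(\lambda)\left[\Delta\bar x_r(\mu) + b(\bar x_r(\mu),\bar u_r)\right]\\
+ \langle\sigma(\bar x_r(\lambda),\bar u_r),\sigma(\bar x_r(\mu),\bar u_r)\rangle_{L_2(\Xi,\mathbb{R})},
\end{multline*}
and diffusion $\xi_r(\mu)\sigma(\bar x_r(\lambda),\bar u_r) + \xi_r(\lambda)\sigma(\bar x_r(\mu),\bar u_r)$. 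The crucial observation is that the It\^o correction produces precisely the quadratic-variation term $\langle\sigma,\sigma\rangle_{L_2(\Xi,\mathbb{R})}$, which is of order $O(1)$ in $\xi$ and will be the sole source of the leading contribution.

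Next I would apply It\^o's formula for variational solutions to SPDEs to $\langle P^\eta_r,\Xi_r\rangle_{L^2(\Lambda^2)}$ on $[t,t+\tau]$ and take $\mathbb{E}_t$. The stochastic integrals vanish under the conditional expectation, and since $\Xi_t = 0$ the boundary term at $r=t$ drops out, so that the left-hand side of the lemma equals $\mathbb{E}_t\int_t^{t+\tau}(\mathrm{A}_r+\mathrm{B}_r+\mathrm{C}_r+\mathrm{D}_r+\mathrm{E}_r)\,\mathrm dr$. Here
\[
\mathrm{C}_r := \int_{\Lambda^2}P^\eta_r(\lambda,\mu)\langle\sigma(\bar x_r(\lambda),\bar u_r),\sigma(\bar x_r(\mu),\bar u_r)\rangle_{L_2(\Xi,\mathbb{R})}\,\mathrm d\lambda\,\mathrm d\mu
\]
is the contribution of the It\^o correction, $\mathrm{A}_r$ collects the two Laplacian terms (paired as $\langle P^\eta_r\xi_r,\Delta\bar x_r\rangle_{H^1_0(\Lambda)\times H^{-1}(\Lambda)}$ and its symmetric counterpart, using that the kernel $P^\eta_r\in H^1_0(\Lambda^2)$ defines an operator $L^2(\Lambda)\to H^1_0(\Lambda)$), $\mathrm{B}_r$ collects the $b$-terms, $\mathrm{D}_r$ is the pairing of the drift of $P^\eta$ against $\Xi_r$, and $\mathrm{E}_r := \langle Q^\eta_r,\xi_r\otimes\sigma(\bar x_r,\bar u_r)+\sigma(\bar x_r,\bar u_r)\otimes\xi_r\rangle_{L_2(\Xi,L^2(\Lambda^2))}$ is the cross-variation.

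The bulk of the work is to show $\mathbb{E}_t\int_t^{t+\tau}(\mathrm{A}_r+\mathrm{B}_r+\mathrm{D}_r+\mathrm{E}_r)\,\mathrm dr = o(\tau)$. Each of these carries at least one factor of the increment $\xi_r$, which through $\sup_{r}\mathbb{E}_t\|\xi_r\|_{L^2(\Lambda)}^2\le C\tau$ (\Cref{aprioriy}) contributes a factor $\sqrt\tau$ after Cauchy--Schwarz; the remaining time-integrals — of $\|P^\eta_r\|^2$ or $\|Q^\eta_r\|^2$ against the polynomially growing coefficients — run over the shrinking interval $[t,t+\tau]$ and hence tend to zero, so that dominated convergence upgrades the naive $O(\sqrt\tau)$ to $\sqrt\tau\cdot o(\sqrt\tau)=o(\tau)$. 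The genuinely delicate point, and the main obstacle, is the Laplacian term $\mathrm{A}_r$ (and the $\Delta P^\eta$ part of $\mathrm{D}_r$): the $H^1_0(\Lambda)$-norms of $\bar x_r$ and $\xi_r$ appearing there do \emph{not} vanish as $\tau\downarrow 0$, so I would restrict to Lebesgue points $t$ of $r\mapsto\|\bar x_r\|_{H^1_0(\Lambda)}^2$ and $r\mapsto\|\bar x_r\|_{H^1_0(\Lambda)}^4$, for which $\int_t^{t+\tau}\|\bar x_r\|_{H^1_0(\Lambda)}^2\,\mathrm dr = O(\tau)$, and combine this estimate with the $L^2$-smallness of $\xi_r$ via Cauchy--Schwarz to recover the $o(\tau)$ bound. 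This step relies on the higher moments granted by \Cref{assumption} and on the regularity $P^\eta\in L^2(\Omega;C([s,T];L^2(\Lambda^2)))$.

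Finally, for the leading term I would invoke the conditional Lebesgue differentiation argument exactly as in the proof of \Cref{firsttimeincrement}. Since $r\mapsto\mathrm{C}_r$ is integrable, almost every $t$ is a Lebesgue point; replacing $P^\eta_r$ by $P^\eta_t$ and $\bar x_r$ by $\bar x_t$ (continuity of these trajectories) and treating the non-continuous dependence on $\bar u_r$ through the $L^1$-Lebesgue point property yields, along a subsequence and $\mathbb{P}$-almost surely,
\begin{multline*}
\mathbb{E}_t\int_t^{t+\tau}\mathrm{C}_r\,\mathrm dr\\
= \tau\,\mathbb{E}_t\left[\int_{\Lambda^2}P^\eta_t(\lambda,\mu)\langle\sigma(\bar x_t(\lambda),\bar u_t),\sigma(\bar x_t(\mu),\bar u_t)\rangle_{L_2(\Xi,\mathbb{R})}\,\mathrm d\lambda\,\mathrm d\mu\right] + o(\tau),
\end{multline*}
which is precisely the claimed identity.
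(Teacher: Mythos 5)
Your proposal is correct and follows essentially the same route as the paper: derive the tensor-product dynamics of the squared increment (the paper's analogue of your $\Xi_r$), apply It\^o's formula for variational solutions to its pairing with $P^\eta$, observe that every resulting term except the It\^o-correction term $\langle\sigma,\sigma\rangle$ carries a factor of $\bar x_r - \bar x_t$ and is therefore $o(\tau)$, and identify the leading term by the Lebesgue differentiation argument of \Cref{firsttimeincrement}. Your write-up is in fact more explicit than the paper's about the remainder estimates (the Cauchy--Schwarz bookkeeping and the Lebesgue-point treatment of the Laplacian terms), which the paper compresses into ``arguing as in the proof of \Cref{firsttimeincrement}.''
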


\begin{proof}
	The equation for the tensor product $ (\bar x_{t+\tau}-\bar x_t) \otimes (\bar x_{t+\tau}-\bar x_t)$ is
	\begin{equation}
		\begin{split}
			&\mathrm{d}((\bar x_{t+\tau}-\bar x_t)(\lambda) (\bar x_{t+\tau}-\bar x_t) (\mu) )\\
			&= \left [ (\bar x_{t+\tau} - \bar x_t)(\lambda) \Delta\bar x_{t+\tau}(\mu) + (\bar x_{t+\tau} - \bar x_t)(\mu) \Delta\bar x_{t+\tau}(\lambda) \right ] \mathrm{d}\tau\\
			&\quad+ \left [ (\bar x_{t+\tau}-\bar x_t)(\lambda) b(\bar x_{t+\tau}(\mu),\bar u_{t+\tau}) + (\bar x_{t+\tau}-\bar x_t)(\mu) b(\bar x_{t+\tau}(\lambda),\bar u_{t+\tau}) \right ] \mathrm{d}\tau\\
			&\quad+ \langle \sigma(\bar x_{t+\tau}(\lambda),\bar u_{t+\tau}), \sigma(\bar x_{t+\tau}(\mu),\bar u_{t+\tau}) \rangle_{L_2(\Xi,\mathbb{R})} \mathrm{d}\tau\\
			&\quad + \left [ (\bar x_{t+\tau}-\bar x_t)(\lambda) \sigma(\bar x_{t+\tau}(\mu),\bar u_{t+\tau}) + (\bar x_{t+\tau}-\bar x_t)(\mu) \sigma(\bar x_{t+\tau}(\lambda),\bar u_{t+\tau}) \right ] \mathrm{d}W_{\tau}.
		\end{split}
	\end{equation}
	Again, applying It\^o's formula for variational solutions to SPDEs and taking the conditional expectation yields
	\begin{equation}
		\begin{split}
			&\mathbb{E}^s_{\nu,t} \left [ \left \langle P^{\eta}_{t+\tau}, (\bar x_{t+\tau} -\bar x_t)\otimes (\bar x_{t+\tau}-\bar x_t) \right \rangle \right ]\\
			&= \mathbb{E}^s_{\nu,t} \left [ \int_t^{t+\tau} \langle P^{\eta}_r, \mathrm{d} ((\bar x_r -\bar x_t)\otimes (\bar x_r-\bar x_t)) \rangle + \int_t^{t+\tau} \langle (\bar x_r -\bar x_t)\otimes (\bar x_r-\bar x_t), \mathrm{d}P^{\eta}_r \rangle \right ]\\
			&\quad + \mathbb{E}^s_{\nu,t} \left [ \langle P^{\eta}_{\cdot}, (\bar x_{\cdot} -\bar x_t)\otimes (\bar x_{\cdot} -\bar x_t) \rangle_{t+\tau} \right ],
		\end{split}
	\end{equation}
	where
	\begin{equation}
		\begin{split}	
			&\mathbb{E}^s_{\nu,t} \left [ \int_t^{t+\tau} \langle P^{\eta}_r, \mathrm{d} ((\bar x_r -\bar x_t)\otimes (\bar x_r-\bar x_t)) \rangle \right ]\\
			&= \mathbb{E}^s_{\nu,t} \left [ \int_t^{t+\tau} \langle P^{\eta}_r, (\bar x_r - \bar x_t)(\lambda) \Delta\bar x_r(\mu) + (\bar x_r - \bar x_t)(\mu) \Delta\bar x_r(\lambda) \rangle \mathrm{d}r \right ]\\
			&\quad + \mathbb{E}^s_{\nu,t} \left [ \int_t^{t+\tau} \langle P^{\eta}_r, (\bar x_r -\bar x_t)(\lambda) b(\bar x_r(\mu),\bar u_r) + (\bar x_r -\bar x_t)(\mu) b(\bar x_r(\lambda),\bar u_r) \rangle \mathrm{d}r \right ]\\
			&\quad + \mathbb{E}^s_{\nu,t} \left [ \int_t^{t+\tau} \langle P^{\eta}_r , \langle \sigma(\bar x_r(\lambda),\bar u_r), \sigma(\bar x_r(\mu),\bar u_r) \rangle_{L_2(\Xi,\mathbb{R})} \rangle \mathrm{d}r \right ],
		\end{split}
	\end{equation}
	and
	\begin{equation}
		\begin{split}
			&\mathbb{E}^s_{\nu,t}\! \left [ \int_t^{t+\tau} \!\!\! \langle (\bar x_r\! -\!\bar x_t)\otimes (\bar x_r\!-\!\bar x_t), \mathrm{d}P^{\eta}_r \rangle \right ]\\
			&= \mathbb{E}^s_{\nu,t} \!\left [ \int_t^{t+\tau}\!\!\! \langle (\bar x_r\!-\!\bar x_t)(\lambda) (\bar x_r\!-\!\bar x_t) (\mu) , \Delta P^{\eta}_r(\lambda,\mu) \rangle \mathrm{d}r \right ]\\
			&\quad + \mathbb{E}^s_{\nu,t} \!\left [ \int_t^{t+\tau}\!\!\! \langle (\bar x_r\!-\!\bar x_t)(\lambda) (\bar x_r\!-\!\bar x_t) (\mu) , (b_x(\bar x_r(\lambda),\bar u_r) + b_x(\bar x_r(\mu),\bar u_r)) P^{\eta}_r(\lambda,\mu) \rangle \mathrm{d}r \right ]\\
			&\quad+ \mathbb{E}^s_{\nu,t} \!\left [ \int_t^{t+\tau}\!\!\! \langle (\bar x_r\!-\!\bar x_t)(\lambda) (\bar x_r\!-\!\bar x_t) (\mu) , \langle \sigma_{x}(\bar x_r(\lambda), \bar u_r), \sigma_{x}(\bar x_r(\mu), \bar u_r) \rangle_{L_2(\Xi,\mathbb{R})} P^{\eta}_r(\lambda,\mu) \rangle \mathrm{d}r \right ]\\
			&\quad+ \mathbb{E}^s_{\nu,t}\! \left [ \int_t^{t+\tau}\!\!\! \langle (\bar x_r\!-\!\bar x_t)(\lambda) (\bar x_r\!-\!\bar x_t) (\mu) , \langle \sigma_{x} (\bar x_r(\lambda),\bar u_r)\! +\! \sigma_{x} (\bar x_r(\mu),\bar u_r), Q^{\eta}_r(\lambda,\mu)\rangle \rangle \mathrm{d}r \right ]\\
			&\quad+ \mathbb{E}^s_{\nu,t}\! \left [ \int_t^{t+\tau}\!\!\! \langle (\bar x_r\!-\!\bar x_t)(\lambda) (\bar x_r\!-\!\bar x_t) (\mu) , \delta^{\ast} ( l_{xx}(\bar x_r(\lambda), \bar u_r) + b_{xx}(\bar x_r(\lambda), \bar u_r) p_r(\lambda)) \rangle \mathrm{d}r \right ]\\
			&\quad+ \mathbb{E}^s_{\nu,t}\! \left [ \int_t^{t+\tau} \!\!\! \langle (\bar x_r\!-\!\bar x_t)(\lambda) (\bar x_r\!-\!\bar x_t) (\mu) , \delta^{\ast} ( \langle \sigma_{xx}(\bar x_r(\lambda),\bar u_r), q_r \rangle_{L_2(\Xi,\mathbb{R})}) \rangle \mathrm{d}r \right ],
		\end{split}
	\end{equation}
	and
	\begin{equation}
		\begin{split}
			&\mathbb{E}^s_{\nu,t} \left [ \langle P^{\eta}_{\cdot}, (\bar x_{\cdot} -\bar x_t)\otimes (\bar x_{\cdot} -\bar x_t) \rangle_{t+\tau} \right ]\\
			&= \mathbb{E}^s_{\nu,t} \left [ \int_t^{t+\tau} \langle (\bar x_r-\bar x_t)\otimes \sigma(\bar x_r,\bar u_r) + \sigma(\bar x_r,\bar u_r) \otimes (\bar x_r-\bar x_t) , Q^{\eta}_r \rangle_{L_2(\Xi,L^2(\Lambda^2))} \mathrm{d}r \right ].
		\end{split}
	\end{equation}
	Except
	\begin{equation}
		\mathbb{E}^s_{\nu,t} \left [ \int_t^{t+\tau} \langle P^{\eta}_r , \langle \sigma(\bar x_r(\lambda),\bar u_r), \sigma(\bar x_r(\mu),\bar u_r) \rangle_{L_2(\Xi,\mathbb{R})} \rangle \mathrm{d}r \right ],
	\end{equation}
	the integrand in each summand contains the term $\bar x_r-\bar x_t$. Therefore, arguing as in the proof of Lemma \ref{firsttimeincrement}, when dividing by $\tau$ and taking the limit $\tau\downarrow 0$, this is the only remaining term.
\end{proof}

\subsubsection{Mixed Time- and Space-Increments}\label{mixedtimeandspaceincrements}

\begin{lemma}\label{mixedtimeincrement}
	It holds for almost every $t\in [s,T]$
	\begin{equation}
		\mathbb{E}^s_{\nu,t} \left [ \int_{\Lambda^2} P^{\eta}_{t+\tau}(\lambda,\mu) z (\lambda) ( \bar x_{t+\tau} - \bar x_t ) (\mu) \mathrm{d}\lambda \mathrm{d}\mu \right ] = o\left ( \tau + \|z\|_{L^2(\Lambda)}^2 \right ),
	\end{equation}
	$\tau>0$, $z\in L^2(\Lambda)$, $\mathbb{P}$--almost surely.
\end{lemma}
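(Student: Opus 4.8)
The plan is to follow the It\^o-based scheme of \Cref{firsttimeincrement} and \Cref{secondtimeincrement}: apply It\^o's formula for variational solutions to SPDEs to the scalar product $\langle P^{\eta}_r, z\otimes(\bar x_r - \bar x_t)\rangle_{L^2(\Lambda^2)}$ on the interval $[t,t+\tau]$ and take the conditional expectation $\mathbb{E}_t[\,\cdot\,]$, which annihilates the stochastic integrals. Since $z$ is frozen in time, only the factor $\bar x_r - \bar x_t$ carries dynamics, and its value at the left endpoint $r=t$ is the zero tensor, so the lower boundary term vanishes. We are then left with three groups of terms: the drift of $z\otimes(\bar x_r-\bar x_t)$ tested against $P^{\eta}_r$ (the pieces $z\otimes\Delta\bar x_r$ and $z\otimes b(\bar x_r,\bar u_r)$ coming from \eqref{stateequation}), the drift of $P^{\eta}_r$ from \eqref{mollifiedsecondorderadjoint} tested against $z\otimes(\bar x_r-\bar x_t)$, and the joint quadratic variation $\int_t^{t+\tau}\langle z\otimes\sigma(\bar x_r,\bar u_r), Q^{\eta}_r\rangle_{L_2(\Xi,L^2(\Lambda^2))}\,\mathrm{d}r$.

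The essential point, exactly as in \Cref{firsttimeincrement}, is that one must \emph{not} estimate $\langle P^{\eta}_{t+\tau}, z\otimes(\bar x_{t+\tau}-\bar x_t)\rangle$ directly: the naive Cauchy--Schwarz bound gives only $C\sqrt{\tau}\,\|z\|_{L^2(\Lambda)}$, which is merely $O(\tau+\|z\|^2_{L^2(\Lambda)})$ and not $o(\tau+\|z\|^2_{L^2(\Lambda)})$ (the two are comparable along $\|z\|_{L^2(\Lambda)}=\sqrt\tau$). Taking conditional expectation after the It\^o expansion removes the order-$\sqrt\tau$ martingale fluctuation of $\bar x_{t+\tau}-\bar x_t$ and leaves only order-$\tau$ contributions. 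Concretely, the first drift group obeys
\begin{equation*}
\left| \mathbb{E}_t \left[ \int_t^{t+\tau} \langle P^{\eta}_r, z\otimes(\Delta \bar x_r + b(\bar x_r,\bar u_r)) \rangle \, \mathrm{d}r \right] \right| \leq C\tau\, \| z \|_{L^2(\Lambda)},
\end{equation*}
by Cauchy--Schwarz, the $H^{-1}(\Lambda)$--$H^1_0(\Lambda)$ duality in the $\mu$-variable, the linear growth of $b$, and the Lebesgue-point property of $r\mapsto\|P^{\eta}_r\|^2_{H^1_0(\Lambda^2)}$ and $r\mapsto\|\bar x_r\|^2_{H^1_0(\Lambda)}$; the quadratic-variation term is bounded analogously by $C\tau\|z\|_{L^2(\Lambda)}$ using the linear growth of $\sigma$ and $Q^{\eta}\in L^2$; and every term coming from $\mathrm{d}P^{\eta}_r$ carries the vanishing factor $\bar x_r-\bar x_t$, hence is controlled by $C\tau^{3/2}\|z\|_{L^2(\Lambda)}$ via the time regularity $\mathbb{E}_t[\|\bar x_r-\bar x_t\|^2_{L^2(\Lambda)}]\le C(r-t)$ from \eqref{timeregularityx}. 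Since $\tau\|z\|_{L^2(\Lambda)}\le \tfrac{\sqrt\tau}{2}(\tau+\|z\|^2_{L^2(\Lambda)})$ and $\tau^{3/2}\|z\|_{L^2(\Lambda)}\le\tfrac{\tau}{2}(\tau+\|z\|^2_{L^2(\Lambda)})$ by Young's inequality, every contribution is bounded by $\varepsilon(\tau)(\tau+\|z\|^2_{L^2(\Lambda)})$ with $\varepsilon(\tau)\to 0$, which is precisely the claimed asymptotic.

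The one genuine obstacle is regularity. To test the Laplacian part $\Delta P^{\eta}_r\in H^{-1}(\Lambda^2)$ of $\mathrm{d}P^{\eta}_r$ against $z\otimes(\bar x_r-\bar x_t)$ through the Gelfand triple on $\Lambda^2$, one needs $z\otimes(\bar x_r-\bar x_t)\in H^1_0(\Lambda^2)$, i.e.\ $z\in H^1_0(\Lambda)$ rather than merely $z\in L^2(\Lambda)$. I would therefore first run the entire It\^o computation for $z\in H^1_0(\Lambda)$, where the variational It\^o formula (\cite{pardoux2021,krylov2013}) applies verbatim, obtaining the bound $\varepsilon(\tau)(\tau+\|z\|^2_{L^2(\Lambda)})$ with $\varepsilon(\tau)$ \emph{independent of $z$}. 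The functional $z\mapsto\mathbb{E}_t[\langle P^{\eta}_{t+\tau}, z\otimes(\bar x_{t+\tau}-\bar x_t)\rangle]$ is linear and $L^2(\Lambda)$-continuous (by the crude Cauchy--Schwarz bound above, whose constant is $\mathbb{P}$-a.s.\ finite), and the upper bound depends continuously on $\|z\|_{L^2(\Lambda)}$ only; hence, approximating an arbitrary $z\in L^2(\Lambda)$ by $z_n\in H^1_0(\Lambda)$ with $z_n\to z$ in $L^2(\Lambda)$ and passing to the limit upgrades the estimate to all $z\in L^2(\Lambda)$, which completes the proof.
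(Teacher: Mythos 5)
Your diagnosis of the difficulty is correct (naive Cauchy--Schwarz gives only $C\sqrt\tau\,\|z\|_{L^2(\Lambda)}$, which is $O(\tau+\|z\|^2_{L^2(\Lambda)})$ but not $o$, so the conditional expectation must be used to kill the martingale part of $\bar x_{t+\tau}-\bar x_t$), but your implementation via It\^o's formula on $\langle P^{\eta}_r, z\otimes(\bar x_r-\bar x_t)\rangle_{L^2(\Lambda^2)}$ has a genuine gap in the treatment of the $\mathrm{d}P^{\eta}_r$-terms. The drift of $P^{\eta}$ contains $\Delta P^{\eta}_r=(\Delta_\lambda+\Delta_\mu)P^{\eta}_r\in H^{-1}(\Lambda^2)$, and pairing it with $z\otimes(\bar x_r-\bar x_t)$ through the Gelfand triple on $\Lambda^2$ forces an integration by parts in \emph{both} variables; the $\Delta_\lambda$ half necessarily puts the derivative on $z$:
\begin{equation*}
\langle \Delta_\lambda P^{\eta}_r, z\otimes(\bar x_r-\bar x_t)\rangle_{H^{-1}(\Lambda^2)\times H^1_0(\Lambda^2)} = -\int_{\Lambda^2}\partial_\lambda P^{\eta}_r(\lambda,\mu)\, z^{\prime}(\lambda)\,(\bar x_r-\bar x_t)(\mu)\,\mathrm{d}\lambda\,\mathrm{d}\mu .
\end{equation*}
(The $\Delta_\mu$ half partially cancels against $\langle P^{\eta}_r, z\otimes\Delta\bar x_r\rangle$ coming from the forward drift, but nothing cancels the $\Delta_\lambda$ half, and symmetry of $P^{\eta}$ does not help.) The natural bound for this term is therefore $C\tau^{3/2}\|z\|_{H^1_0(\Lambda)}$, not the $C\tau^{3/2}\|z\|_{L^2(\Lambda)}$ you claim, and $\tau^{3/2}\|z\|_{H^1_0(\Lambda)}$ is \emph{not} $o(\tau+\|z\|^2_{L^2(\Lambda)})$: take $z$ a single high-frequency Dirichlet mode scaled so that $\|z\|_{L^2(\Lambda)}=\sqrt\tau$ and $\|z\|_{H^1_0(\Lambda)}=\tau^{-1}$; then $\tau^{3/2}\|z\|_{H^1_0(\Lambda)}=\sqrt\tau$ while $\tau+\|z\|^2_{L^2(\Lambda)}=2\tau$. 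So your intermediate claim for $z\in H^1_0(\Lambda)$ -- a bound $\varepsilon(\tau)(\tau+\|z\|^2_{L^2(\Lambda)})$ with $\varepsilon(\tau)$ independent of $z$ -- is not what the It\^o computation delivers. This in turn sinks the density step: since the $L^2$-continuity bound of the functional at fixed $\tau$ is only $C\sqrt\tau\|\cdot\|_{L^2(\Lambda)}$ (again merely $O$, not $o$, of $\tau+\|z\|^2$), approximating $z$ by $z_n\in H^1_0(\Lambda)$ requires balancing $C\sqrt\tau\|z-z_n\|_{L^2(\Lambda)}$ against $C\tau^{3/2}\|z_n\|_{H^1_0(\Lambda)}$, and for sequences where $z$ oscillates at frequency much larger than $\tau^{-3/2}$ no choice of $z_n$ makes both contributions $o(\tau+\|z\|^2_{L^2(\Lambda)})$.

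For comparison, the paper's proof avoids It\^o's formula on the tensor product entirely, precisely so that $\Delta$ never acts on anything involving $z$. It splits $P^{\eta}_{t+\tau}=(P^{\eta}_{t+\tau}-P^{\eta}_t)+P^{\eta}_t$: the increment term is bounded by $\mathbb{E}_t[\|P^{\eta}_{t+\tau}-P^{\eta}_t\|^2_{L^2(\Lambda^2)}]^{1/2}\,\|z\|_{L^2(\Lambda)}\,\mathbb{E}_t[\|\bar x_{t+\tau}-\bar x_t\|^2_{L^2(\Lambda)}]^{1/2}$, where the last two factors are $O(\sqrt{\tau+\|z\|^2_{L^2(\Lambda)}})$ each and the first tends to zero (uniformly in $z$) by pathwise continuity of $P^{\eta}$ in $L^2(\Lambda^2)$; for the frozen term, Cauchy--Schwarz is applied with the conditional expectation \emph{inside}, giving $\|P^{\eta}_t\|_{L^2(\Lambda^2)}\|z\|_{L^2(\Lambda)}\|\mathbb{E}_t[\bar x_{t+\tau}-\bar x_t]\|_{L^2(\Lambda)}$, and the identity $\|\mathbb{E}_t[\bar x_{t+\tau}-\bar x_t]\|^2_{L^2(\Lambda)}=2\int_t^{t+\tau}\langle\mathbb{E}_t[\bar x_r-\bar x_t],\mathbb{E}_t[\Delta\bar x_r+b(\bar x_r,\bar u_r)]\rangle_{H^1_0(\Lambda)\times H^{-1}(\Lambda)}\mathrm{d}r$ combined with Lebesgue differentiation yields $\|\mathbb{E}_t[\bar x_{t+\tau}-\bar x_t]\|_{L^2(\Lambda)}=o(\sqrt\tau)$, whence the product is $o(\tau+\|z\|^2_{L^2(\Lambda)})$ by Young's inequality. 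This realizes your ``kill the martingale part'' idea while applying the dynamics only to $\bar x$, so no regularity of $z$ beyond $L^2(\Lambda)$ is ever needed; if you want to salvage your route, you would have to find a replacement for the $\Delta_\lambda P^{\eta}$ estimate that does not spend derivatives on $z$, and I do not see one.
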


\begin{proof}
	First note
	\begin{equation}\label{separate}
		\begin{split}
			&\mathbb{E}^s_{\nu,t} \left [ \int_{\Lambda^2} P^{\eta}_{t+\tau}(\lambda,\mu) z(\lambda) ( \bar x_{t+\tau} - \bar x_t ) (\mu) \mathrm{d}\lambda \mathrm{d}\mu \right ]\\
			&= \mathbb{E}^s_{\nu,t} \left [ \int_{\Lambda^2} (P^{\eta}_{t+\tau}-P^{\eta}_t)(\lambda,\mu) z(\lambda) ( \bar x_{t+\tau} - \bar x_t ) (\mu) \mathrm{d}\lambda \mathrm{d}\mu \right ]\\
			&\quad + \mathbb{E}^s_{\nu,t} \left [ \int_{\Lambda^2} P^{\eta}_t(\lambda,\mu) z(\lambda) ( \bar x_{t+\tau} - \bar x_t ) (\mu) \mathrm{d}\lambda \mathrm{d}\mu \right ].
		\end{split}
	\end{equation}
	For the first expectation, we have
	\begin{equation}
		\begin{split}
			&\mathbb{E}^s_{\nu,t} \left [ \int_{\Lambda^2} (P^{\eta}_{t+\tau}-P^{\eta}_t)(\lambda,\mu) z(\lambda) ( \bar x_{t+\tau} - \bar x_t ) (\mu) \mathrm{d}\lambda \mathrm{d}\mu \right ]\\
			&\leq \mathbb{E}^s_{\nu,t} \left [ \| P^{\eta}_{t+\tau}-P^{\eta}_t \|_{L^2(\Lambda^2)}^2 \right ]^{\frac12}  \| z \|_{L^2(\Lambda)} \mathbb{E}^s_{\nu,t} \left [ \| \bar x_{t+\tau} - \bar x_t \|^2_{L^2(\Lambda)} \right ]^{\frac12}.
		\end{split}
	\end{equation}
	Since the second and third term are each of order $O\left ( \sqrt{\tau + \|z\|_{L^2(\Lambda)}^2} \right )$, and $P^{\eta}$ is continuous with values in $L^2(\Lambda^2)$ $\mathbb{P}$-almost surely, the whole expression is of order $o\left ( \tau + \|z\|_{L^2(\Lambda)}^2 \right )$.
	
	For the second expectation in equation \eqref{separate}, we have
	\begin{equation}
		\begin{split}
			&\mathbb{E}^s_{\nu,t} \left [ \int_{\Lambda^2} P^{\eta}_t(\lambda,\mu) z(\lambda) ( \bar x_{t+\tau} - \bar x_t ) (\mu) \mathrm{d}\lambda \mathrm{d}\mu \right ]\\
			&\leq \| P^{\eta}_t \|_{L^2(\Lambda^2)} \| z \|_{L^2(\Lambda)} \left \| \mathbb{E}^s_{\nu,t} \left [ \bar x_{t+\tau} - \bar x_t \right ] \right \|_{L^2(\Lambda)}.
		\end{split}
	\end{equation}
	Since
	\begin{equation}
		\left \| \mathbb{E}^s_{\nu,t} \left [ \bar x_{t+\tau} - \bar x_t \right ] \right \|_{L^2(\Lambda)}^2 = 2 \int_t^{t+\tau} \langle \mathbb{E}^s_{\nu,t} [ \bar x_r - \bar x_t ] , \mathbb{E}^s_{\nu,t} [ \Delta \bar x_r + b(\bar x_r,\bar u_r) ] \rangle_{H^1_0(\Lambda)\times H^{-1}(\Lambda)} \mathrm{d}r,
	\end{equation}
	by Lebesgue's differentiation theorem, we have for almost every $t\in [s,T]$
	\begin{equation}
		\left \| \mathbb{E}^s_{\nu,t} \left [ \bar x_{t+\tau} - \bar x_t \right ] \right \|_{L^2(\Lambda)} = o\left (\sqrt{\tau} \right )
	\end{equation}
	$\mathbb{P}$-almost surely.	Therefore, for almost every $t\in [s,T]$
	\begin{equation}
		\mathbb{E}^s_{\nu,t} \left [ \int_{\Lambda^2} P^{\eta}_t(\lambda,\mu) z(\lambda) ( \bar x_{t+\tau} - \bar x_t ) (\mu) \mathrm{d}\lambda \mathrm{d}\mu \right ] = o\left ( \tau + \|z\|_{L^2(\Lambda)}^2 \right ),
	\end{equation}
	$\mathbb{P}$--almost surely, which concludes the proof.
\end{proof}

\begin{lemma}\label{timeincrementpmixed}
	It holds for almost every $t\in [s,T]$
	\begin{equation}
		\mathbb{E}^s_{\nu,t} \left [ \int_{\Lambda} ( p_{t+\tau}(\lambda) - p_t(\lambda) ) z(\lambda) \mathrm{d}\lambda \right ] = o\left ( \tau + \|z\|_{L^2(\Lambda)}^2 \right ),
	\end{equation}
	$\tau>0$, $z\in L^2(\Lambda)$, $\mathbb{P}$--almost surely.
\end{lemma}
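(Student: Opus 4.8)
The plan is to reduce the whole statement to the single estimate
\begin{equation}
\bigl\| \mathbb{E}_t[p_{t+\tau} - p_t] \bigr\|_{L^2(\Lambda)} = o\bigl(\sqrt{\tau}\bigr), \qquad \text{for a.e. } t\in[s,T],\ \mathbb{P}\text{-a.s.}
\end{equation}
Since $z$ is fixed, one pulls it out of the conditional expectation and applies Cauchy--Schwarz to obtain $\bigl|\mathbb{E}_t[\langle p_{t+\tau}-p_t, z\rangle_{L^2(\Lambda)}]\bigr| \le \|\mathbb{E}_t[p_{t+\tau}-p_t]\|_{L^2(\Lambda)}\,\|z\|_{L^2(\Lambda)}$. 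Writing the first factor as $\varepsilon(\tau)\sqrt{\tau}$ with $\varepsilon(\tau)\to 0$ and using $\sqrt{\tau}\,\|z\|_{L^2(\Lambda)} \le \tfrac12(\tau + \|z\|_{L^2(\Lambda)}^2)$, the product is bounded by $\tfrac12\varepsilon(\tau)(\tau + \|z\|_{L^2(\Lambda)}^2) = o(\tau + \|z\|_{L^2(\Lambda)}^2)$, as required (note that $\tau + \|z\|_{L^2(\Lambda)}^2\to 0$ forces $\tau\to 0$).

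To prove the displayed estimate I would argue as in the treatment of $\bar x$ in \Cref{mixedtimeincrement}, replacing the forward state equation by the backward adjoint equation \eqref{firstadjoint}. Set $n_r := \mathbb{E}_t[p_r - p_t]$, so that $n_t = 0$. Taking $\mathbb{E}_t$ in \eqref{firstadjoint} annihilates the stochastic integral and yields
\begin{equation}
n_r = -\int_t^r \mathbb{E}_t\bigl[\Delta p_\rho + f_\rho\bigr]\,\mathrm{d}\rho, \qquad f_\rho := b_x(\bar x_\rho,\bar u_\rho)p_\rho + \langle \sigma_x(\bar x_\rho,\bar u_\rho), q_\rho\rangle_{L_2(\Xi,\mathbb{R})} + l_x(\bar x_\rho,\bar u_\rho).
\end{equation}
Hence $n$ is an absolutely continuous $H^{-1}(\Lambda)$-valued path with $\dot n_r = -\mathbb{E}_t[\Delta p_r + f_r] \in L^2([t,T];H^{-1}(\Lambda))$, while $n \in L^2([t,T];H^1_0(\Lambda))$. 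The variational energy identity on the Gelfand triple then gives
\begin{equation}
\bigl\|\mathbb{E}_t[p_{t+\tau}-p_t]\bigr\|_{L^2(\Lambda)}^2 = \|n_{t+\tau}\|_{L^2(\Lambda)}^2 = 2\int_t^{t+\tau} \langle \dot n_r, n_r\rangle_{H^{-1}(\Lambda)\times H^1_0(\Lambda)}\,\mathrm{d}r.
\end{equation}

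The conclusion comes from Cauchy--Schwarz in $r$, namely
\begin{equation}
\|n_{t+\tau}\|_{L^2(\Lambda)}^2 \le 2\left(\int_t^{t+\tau}\|n_r\|_{H^1_0(\Lambda)}^2\,\mathrm{d}r\right)^{\!1/2}\left(\int_t^{t+\tau}\|\dot n_r\|_{H^{-1}(\Lambda)}^2\,\mathrm{d}r\right)^{\!1/2},
\end{equation}
together with conditional Jensen and Lebesgue's differentiation theorem. For the second factor, $\|\dot n_r\|_{H^{-1}(\Lambda)}^2 \le \mathbb{E}_t\|\Delta p_r + f_r\|_{H^{-1}(\Lambda)}^2$ is integrable, so the integral is $O(\tau)$ and the factor is $O(\sqrt{\tau})$. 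The crucial point is the first factor: using $\|n_r\|_{H^1_0(\Lambda)}^2 \le \mathbb{E}_t\|p_r - p_t\|_{H^1_0(\Lambda)}^2$ and the fact---already invoked in the proof of \Cref{firsttimeincrement}---that for a.e.\ $t$ one has $\frac1\tau\int_t^{t+\tau}\mathbb{E}_t\|p_r - p_t\|_{H^1_0(\Lambda)}^2\,\mathrm{d}r \to 0$ $\mathbb{P}$-a.s., the integral is $o(\tau)$ and the factor is $o(\sqrt{\tau})$. Multiplying gives $\|n_{t+\tau}\|_{L^2(\Lambda)}^2 = o(\sqrt{\tau})\cdot O(\sqrt{\tau}) = o(\tau)$, which is the desired estimate.

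The only genuine difficulty is that the drift of $p$ contains the unbounded term $\Delta p_r$, which lives in $H^{-1}(\Lambda)$ and therefore cannot be paired with $z\in L^2(\Lambda)$ directly, nor integrated naively in the $L^2(\Lambda)$-norm; this forces one to work on the Gelfand triple via the energy identity rather than by a crude $L^2(\Lambda)$ bound. Moreover, a plain estimate there would only deliver the borderline rate $O(\sqrt{\tau})$, which yields $O(\tau + \|z\|_{L^2(\Lambda)}^2)$ and is \emph{not} sufficient; the improvement to $o(\sqrt{\tau})$ rests entirely on the $H^1_0(\Lambda)$-Lebesgue-point property of $p$ encoded in the vanishing of the first factor, exactly the mechanism already used for the state $\bar x$ in \Cref{mixedtimeincrement}.
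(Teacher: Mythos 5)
Your proposal is correct and follows essentially the same route as the paper: the paper likewise reduces the claim, via Cauchy--Schwarz in $z$, to the estimate $\|\mathbb{E}_t[p_{t+\tau}-p_t]\|_{L^2(\Lambda)} = o(\sqrt{\tau})$, which it obtains ``as in the proof of \Cref{mixedtimeincrement},'' i.e., by the Gelfand-triple energy identity for $\mathbb{E}_t[p_r - p_t]$ (the martingale part of the backward equation \eqref{firstadjoint} vanishing under $\mathbb{E}_t$) combined with Lebesgue's differentiation theorem. Your write-up simply makes explicit the details that the paper delegates to that cross-reference, including the $H^1_0(\Lambda)$-Lebesgue-point mechanism that upgrades the borderline rate $O(\sqrt{\tau})$ to $o(\sqrt{\tau})$.
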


\begin{proof}
	As in the proof of Lemma \ref{mixedtimeincrement}, we have
	\begin{equation}
		\left \| \mathbb{E}^s_{\nu,t} \left [ p_{t+\tau}- p_t \right ] \right \|_{L^2(\Lambda)} = o\left (\sqrt{\tau} \right ).
	\end{equation}
	Therefore,
	\begin{equation}
		\begin{split}
			&\mathbb{E}^s_{\nu,t} \left [ \int_{\Lambda} ( p_{t+\tau}(\lambda) - p_t(\lambda) ) z(\lambda) \mathrm{d}\lambda \right ]\\
			&\leq \left \| \mathbb{E}^s_{\nu,t} \left [ p_{t+\tau}(\lambda) - p_t(\lambda) \right ] \right \|_{L^2(\Lambda)} \| z\|_{L^2(\Lambda)} \\
			&= o\left ( \tau + \|z\|_{L^2(\Lambda)}^2 \right ),
		\end{split}
	\end{equation}
	which concludes the proof.
\end{proof}

\subsubsection{Proof of Theorem \ref{spacetime}}\label{proof}

Fix $t\in [s,T]$ such that all the preceding lemmata hold $\mathbb{P}$--almost surely, and let $\tau > 0$. From the Markov property of the solution of the state equation, it follows that
\begin{equation}
	V(t,\bar x_t) = \mathbb{E}^s_{\nu,t} \left [  \int_t^T \int_{\Lambda} l(\bar x_r(\lambda),\bar u_r) \mathrm{d}\lambda \mathrm{d}r + \int_{\Lambda} h(\bar x_T(\lambda)) \mathrm{d}\lambda \right ],
\end{equation}
see \cite[Chapter 4, Lemma 3.2 and Theorem 3.4]{yong1999} and \cite[Section 2.3.3]{fabbri2017} for the necessary modifications in the infinite dimensional case. Furthermore, we have
\begin{equation}
	V(t+\tau, \bar x_t+z) \leq \mathbb{E}^s_{\nu,t} \left [ \int_{t+\tau}^T \int_{\Lambda} l( x^{\tau,z}_r(\lambda), \bar u_r) \mathrm{d}\lambda \mathrm{d}r + \int_{\Lambda} h(x^{\tau,z}_T(\lambda)) \mathrm{d}\lambda \right ]
\end{equation}
Thus, we obtain for almost every $t\in [s,T]$\begin{equation}
	\begin{split}
		&V(t+\tau, \bar x_t+z) - V(t,\bar x_t)\\
		&\leq \mathbb{E}^s_{\nu,t} \left [ - \int_t^{t+\tau} \int_{\Lambda} l(\bar x_r(\lambda), \bar u_r) \mathrm{d}\lambda \mathrm{d}r + \int_{t+\tau}^T \int_{\Lambda} l( x^{\tau,z}_r(\lambda), \bar u_r) - l(\bar x_r(\lambda),\bar u_r) \mathrm{d}\lambda \mathrm{d}r \right ]\\
		&\quad + \mathbb{E}^s_{\nu,t} \left [ \int_{\Lambda} h(x^{\tau,z}_T(\lambda)) - h(\bar x_T(\lambda)) \mathrm{d}\lambda \right ]\\
		&= \mathbb{E}^s_{\nu,t} \left [ - \int_t^{t+\tau} \int_{\Lambda} l(\bar x_r(\lambda), \bar u_r) \mathrm{d}\lambda \mathrm{d}r + \int_{t+\tau}^T \int_{\Lambda} l_x(\bar x_r(\lambda), \bar u_r) y^{\tau,z}_r(\lambda) \mathrm{d}\lambda \mathrm{d}r \right ]\\
		&\quad+ \mathbb{E}^s_{\nu,t} \left [ \int_{\Lambda} h_x(\bar x_T(\lambda)) y^{\tau,z}_T(\lambda) \mathrm{d}\lambda + \frac12 \int_{t+\tau}^T \int_{\Lambda} l_{xx}(\bar x_r(\lambda), \bar u_r) y^{\tau,z}_r(\lambda) y^{\tau,z}_r(\lambda) \mathrm{d}\lambda \mathrm{d}r \right ]\\
		&\quad+ \mathbb{E}^s_{\nu,t} \left [ \frac12 \int_{\Lambda} h_{xx}(\bar x_T(\lambda)) y^{\tau,z}_T(\lambda) y^{\tau,z}_T(\lambda) \mathrm{d}\lambda \right ] +  o\left (\tau + \|z\|_{L^2(\Lambda)}^2 \right ),
	\end{split}
\end{equation}
$\mathbb{P}$-almost surely, where the remainder terms of the Taylor expansion are of order $o\left (\tau + \|z\|_{L^2(\Lambda)}^2 \right )$ for the same reason as in \eqref{order1}. Using the duality relations from Lemma \ref{duality11} and Lemma \ref{duality22}, and the estimates for the remainder terms of the duality relations, we obtain
\begin{equation}\label{test}
	\begin{split}
		&V(t+\tau, \bar x_t+z) - V(t,\bar x_t)\\
		&\leq \mathbb{E}^s_{\nu,t} \left [ - \int_t^{t+\tau} \int_{\Lambda} l(\bar x_r(\lambda), \bar u_r) \mathrm{d}\lambda \mathrm{d}r + \int_{\Lambda} p_{t+\tau}(\lambda) y^{\tau,z}_{t+\tau}(\lambda) \mathrm{d}\lambda \right ] \\
		&\quad + \mathbb{E}^s_{\nu,t} \left [ \frac12 \int_{\Lambda^2} P^{\eta}_{t+\tau}(\lambda,\mu) y^{\tau,z}_{t+\tau}(\lambda) y^{\tau,z}_{t+\tau}(\mu) \mathrm{d}\lambda \mathrm{d}\mu \right ] \\
		&\quad + \frac12 \mathbb{E}^s_{\nu,t} \left [ \int_{\Lambda} h_{xx}(\bar x_T(\lambda)) y^{\tau,z}_T(\lambda) y^{\tau,z}_T(\lambda) \mathrm{d}\lambda - \int_{\Lambda^2} h^{\eta}_{xx}(\lambda,\mu) y^{\tau,z}_T(\lambda) y^{\tau,z}_T(\mu) \mathrm{d}\lambda \mathrm{d}\mu \right ] \\
		&\quad + o\left (\tau + \|z\|_{L^2(\Lambda)}^2 \right ).
	\end{split}
\end{equation}
Plugging in the initial condition
\begin{equation}
	\begin{split}
		y^{\tau,z}_{t+\tau} &= z+\bar x_t -\bar x_{t+\tau}\\
		&= z - \left ( \int_t^{t+\tau} \Delta \bar x_r + b(\bar x_r,\bar u_r) \mathrm{d}r + \int_t^{t+\tau} \sigma(\bar x_r,\bar u_r) \mathrm{d}W_r \right ),
	\end{split}
\end{equation}
we have
\begin{equation}
	\begin{split}
		&V(t+\tau,\bar x_t+ z) - V(t,\bar x_t)\\
		&\leq \mathbb{E}^s_{\nu,t} \left [ - \int_t^{t+\tau} \int_{\Lambda} l(\bar x_r(\lambda), \bar u_r) \mathrm{d}\lambda \mathrm{d}r + \int_{\Lambda} p_{t+\tau}(\lambda) z(\lambda) \mathrm{d}\lambda \right ]\\
		&\quad -\mathbb{E}^s_{\nu,t} \left [ \int_{\Lambda} p_{t+\tau}(\lambda) ( \bar x_{t+\tau} - \bar x_t ) (\lambda) \mathrm{d}\lambda \right ]\\
		&\quad+ \frac12 \mathbb{E}^s_{\nu,t} \left [ \int_{\Lambda^2} P^{\eta}_{t+\tau}(\lambda,\mu) z(\lambda) z(\mu) \mathrm{d}\lambda \mathrm{d}\mu \right ] \\
		&\quad+ \frac12 \mathbb{E}^s_{\nu,t} \left [ \int_{\Lambda^2} P^{\eta}_{t+\tau}(\lambda,\mu) ( \bar x_{t+\tau} - \bar x_t ) (\lambda) ( \bar x_{t+\tau} - \bar x_t ) (\mu) \mathrm{d}\lambda \mathrm{d}\mu \right ] \\
		&\quad- \mathbb{E}^s_{\nu,t} \left [ \int_{\Lambda^2} P^{\eta}_{t+\tau}(\lambda,\mu) z(\lambda) ( \bar x_{t+\tau} - \bar x_t ) (\mu) \mathrm{d}\lambda \mathrm{d}\mu \right ]\\
		&\quad+ \frac12 \mathbb{E}^s_{\nu,t} \left [ \int_{\Lambda} h_{xx}(\bar x_T(\lambda)) y^{\tau,z}_T(\lambda) y^{\tau,z}_T(\lambda) \mathrm{d}\lambda - \int_{\Lambda^2} h^{\eta}_{xx}(\lambda,\mu) y^{\tau,z}_T(\lambda) y^{\tau,z}_T(\mu) \mathrm{d}\lambda \mathrm{d}\mu \right ]\\
		&\quad+ o\left (\tau + \|z\|_{L^2(\Lambda)}^2 \right )
	\end{split}
\end{equation}
Now we apply Lemma \ref{firsttimeincrement}, Lemma \ref{secondtimeincrement} and Lemma \ref{mixedtimeincrement}, and obtain
\begin{equation}
	\begin{split}
		&V(t+\tau, \bar x_t+z) - V(t, \bar x_t) \\
		&\leq \tau \mathbb{E}^s_{\nu,t} \left [ \vphantom{\int_{\Lambda^2}} - \left \langle p_t, \Delta\bar x_t + b(\bar x_t,\bar u_t) \right \rangle_{H^1_0(\Lambda)\times H^{-1}(\Lambda)} - \langle q_t, \sigma(\bar x_t,\bar u_t) \rangle_{L_2(\Xi,L^2(\Lambda))}\right.\\
		&\quad- \int_{\Lambda} l(\bar x_t(\lambda),\bar u_t) \mathrm{d}\lambda + \left.\frac12 \int_{\Lambda^2} P^{\eta}_t(\lambda,\mu) \langle \sigma(\bar x_t(\lambda),\bar u_t), \sigma(\bar x_t(\mu),\bar u_t) \rangle_{L_2(\Xi,\mathbb{R})} \mathrm{d}\lambda \mathrm{d}\mu \right ]\\
		&\quad+ \mathbb{E}^s_{\nu,t} \left [ \int_{\Lambda} p_{t+\tau}(\lambda) z(\lambda) \mathrm{d}\lambda + \frac12 \int_{\Lambda^2} P^{\eta}_{t+\tau}(\lambda,\mu) z(\lambda) z(\mu) \mathrm{d}\lambda \mathrm{d}\mu \right ]\\
		&\quad + \frac12 \mathbb{E}^s_{\nu,t} \left [ \int_{\Lambda} h_{xx}(\bar x_T(\lambda)) y^{\tau,z}_T(\lambda) y^{\tau,z}_T(\lambda) \mathrm{d}\lambda - \int_{\Lambda^2} h^{\eta}_{xx}(\lambda,\mu) y^{\tau,z}_T(\lambda) y^{\tau,z}_T(\mu) \mathrm{d}\lambda \mathrm{d}\mu \right ]\\
		&\quad + o\left ( \tau + \|z\|_{L^2(\Lambda)}^2 \right ).
	\end{split}
\end{equation}
Using the definition of the Hamiltonian, we can rewrite this as
\begin{equation}
	\begin{split}
		&V(t+\tau, \bar x_t+z) - V(t, \bar x_t)\\
		&\leq \tau \left [ -\langle \Delta \bar x_t ,p_t \rangle_{H^{-1}(\Lambda)\times H^1_0(\Lambda)} -\mathcal{H}(\bar x_t,\bar u_t,p_t,P^{\eta}_t) - \text{tr} \left ( \sigma(\bar x_t,\bar u_t)^{\ast} \left [ q_t - P^{\eta}_t \sigma(\bar x_t,\bar u_t) \right ] \right ) \right ]\\
		&\quad+ \mathbb{E}^s_{\nu,t} \left [ \int_{\Lambda} p_{t+\tau}(\lambda) z(\lambda) \mathrm{d}\lambda + \frac12 \int_{\Lambda^2} P^{\eta}_{t+\tau}(\lambda,\mu) z(\lambda) z(\mu) \mathrm{d}\lambda \mathrm{d}\mu \right ]\\
		&\quad + \frac12 \mathbb{E}^s_{\nu,t} \left [ \int_{\Lambda} h_{xx}(\bar x_T(\lambda)) y^{\tau,z}_T(\lambda) y^{\tau,z}_T(\lambda) \mathrm{d}\lambda - \int_{\Lambda^2} h^{\eta}_{xx}(\lambda,\mu) y^{\tau,z}_T(\lambda) y^{\tau,z}_T(\mu) \mathrm{d}\lambda \mathrm{d}\mu \right ]\\
		&\quad + o\left ( \tau + \|z\|_{L^2(\Lambda)}^2 \right ).
	\end{split}
\end{equation}
Adding a zero and rearranging terms yields
\begin{equation}
	\begin{split}
		&V(t+\tau, \bar x_t+z) - V(t, \bar x_t) - \left ( - \langle \Delta \bar x_t ,p_t \rangle_{H^{-1}(\Lambda)\times H^1_0(\Lambda)} - \mathcal{G}(t,\bar x_t,\bar u_t) \right ) \tau\\
		&- \int_{\Lambda} p_t(\lambda) z(\lambda) \mathrm{d}\lambda - \frac12 \int_{\Lambda^2} P_t(\lambda,\mu) z(\lambda) z(\mu) \mathrm{d}\lambda \mathrm{d}\mu\\
		&\leq \frac12 \text{tr} \left ( \sigma(\bar x_t,\bar u_t)^{\ast}  ( P^{\eta}_t - P_t) \sigma(\bar x_t,\bar u_t) \right ) \tau\\
		&\quad+ \mathbb{E}^s_{\nu,t} \left [ \int_{\Lambda} ( p_{t+\tau}(\lambda) - p_t(\lambda) ) z(\lambda) \mathrm{d}\lambda \right ]\\
		&\quad+ \mathbb{E}^s_{\nu,t} \left [ \frac12 \int_{\Lambda^2} ( P^{\eta}_{t+\tau}(\lambda,\mu) - P_t(\lambda,\mu) ) z(\lambda) z(\mu) \mathrm{d}\lambda \mathrm{d}\mu \right ]\\
		&\quad + \frac12 \mathbb{E}^s_{\nu,t} \left [ \int_{\Lambda} h_{xx}(\bar x_T(\lambda)) y^{\tau,z}_T(\lambda) y^{\tau,z}_T(\lambda) \mathrm{d}\lambda - \int_{\Lambda^2} h^{\eta}_{xx}(\lambda,\mu) y^{\tau,z}_T(\lambda) y^{\tau,z}_T(\mu) \mathrm{d}\lambda \mathrm{d}\mu \right ]\\
		&\quad + o\left ( \tau + \|z\|_{L^2(\Lambda)}^2 \right ).
	\end{split}
\end{equation}
Using Lemma \ref{timeincrementpmixed} and elementary estimates for the right-hand side, we obtain
\begin{equation}
	\begin{split}
		&V(t+\tau, \bar x_t+z) - V(t, \bar x_t) - \left ( - \langle \Delta \bar x_t ,p_t \rangle_{H^{-1}(\Lambda)\times H^1_0(\Lambda)} - \mathcal{G}(t,\bar x_t,\bar u_t) \right ) \tau\\
		&- \int_{\Lambda} p_t(\lambda) z(\lambda) \mathrm{d}\lambda - \frac12 \int_{\Lambda^2} P_t(\lambda,\mu) z(\lambda) z(\mu) \mathrm{d}\lambda \mathrm{d}\mu\\
		&\leq \frac12 \| P^{\eta}_t - P_t \|_{L^2(\Lambda^2)} \| \sigma(\bar x_t,\bar u_t) \|_{L_2(\Xi,L^2(\Lambda))}^2 \tau + \frac12 \|P^{\eta}_{t+\tau}-P_t\|_{L^2(\Lambda^2)} \|z\|_{L^2(\Lambda)}^2\\
		&\quad + \frac12 \mathbb{E}^s_{\nu,t} \left [ \int_{\Lambda} h_{xx}(\bar x_T(\lambda)) y^{\tau,z}_T(\lambda) y^{\tau,z}_T(\lambda) \mathrm{d}\lambda - \int_{\Lambda^2} h^{\eta}_{xx}(\lambda,\mu) y^{\tau,z}_T(\lambda) y^{\tau,z}_T(\mu) \mathrm{d}\lambda \mathrm{d}\mu \right ]\\
		&\quad + o\left ( \tau + \|z\|_{L^2(\Lambda)}^2 \right ).
	\end{split}
\end{equation}
Now, let $(\tau_k,z_k) \to 0$, $\tau_k >0$, be a sequence, which realizes the limit superior of the left-hand side divided by $\tau_k+ \|z_k\|^2_{L^2(\Lambda)}$. By Lemma \ref{regularityterminalcondition} and using the compact embedding $H^{\gamma}_0(\Lambda) \subset\subset L^2(\Lambda)$ (see, e.g., \cite[Theorem 4.54]{demengel2012}), we can extract a subsequence --again denoted by $(\tau_k,z_k)$-- such that $y^{\tau_k,z_k}_T/\sqrt{\tau_k+\|z_k\|^2_{L^2(\Lambda)}}$ converges in $L^2(\Lambda)$ to some limit $\tilde y_T$. Therefore, dividing the inequality by $\tau_k + \|z_k\|^2_{L^2(\Lambda)}$ and sending $(\tau_k,z_k)$ to zero yields
\begin{equation}
	\begin{split}
		&\limsup_{\tau\downarrow 0, z\to 0} \frac{1}{\tau +\|z\|^2_{L^2(\Lambda)}} \Bigg \{ V(t+\tau, \bar x_t+z) - V(t, \bar x_t) - \left ( - \langle \Delta \bar x_t ,p_t \rangle_{L^2(\Lambda)} - \mathcal{G}(t,\bar x_t,\bar u_t) \right ) \tau\\
		&\quad - \int_{\Lambda} p_t(\lambda) z(\lambda) \mathrm{d}\lambda + \frac12 \int_{\Lambda^2} P_t(\lambda,\mu) z(\lambda) z(\mu) \mathrm{d}\lambda \mathrm{d}\mu \Bigg \} \\
		&\leq \frac12 \| P^{\eta}_t - P_t \|_{L^2(\Lambda^2)} \| \sigma(\bar x_t,\bar u_t) \|_{L_2(\Xi,L^2(\Lambda))}^2 + \frac12 \|P^{\eta}_t-P_t\|_{L^2(\Lambda^2)}\\
		&\quad + \frac12 \mathbb{E}^s_{\nu,t} \left [ \int_{\Lambda} h_{xx}(\bar x_T(\lambda)) \tilde y_T(\lambda) \tilde y_T(\lambda) \mathrm{d}\lambda - \int_{\Lambda^2} h^{\eta}_{xx}(\lambda,\mu) \tilde y_T(\lambda) \tilde y_T(\mu) \mathrm{d}\lambda \mathrm{d}\mu \right ].
	\end{split}
\end{equation}
Taking the limit $\eta \to 0$, the right-hand side vanishes, which concludes the proof of the first claim.

The second claim \eqref{4.85} follows along the same lines as in the finite dimensional case with similar modifications as above. \qed

\subsection{Space-Derivatives}\label{sectionspace}

In this section, we consider the case with differentials only in the spatial variable. To this end, we first recall the definition of viscosity super- and subdifferentials.

\begin{definition}
	For $v\in C([s,T]\times L^2(\Lambda))$ the first order viscosity superdifferential in the space-variable of $v$ at $(t,x)\in [s,T]\times L^2(\Lambda)$ is the set
	\begin{equation}
		D^{1,+}_{x} v(t,x) := \left \{ p \in L^2(\Lambda) \middle |	\limsup_{z\to 0} \frac{v(t,x+z) - v(t,x) - \langle p, z\rangle_{L^2(\Lambda)}}{ \|z\|_{L^2(\Lambda)}} \leq 0 \right  \}.
	\end{equation}
	The first order viscosity subdifferential $D^{1,-}_{x} v$ is defined analogously with the $\limsup$ replaced by $\liminf$ and the $\leq$ replaced by $\geq$.
\end{definition}

Concerning the first order derivative, we obtain the following result.

\begin{corollary}\label{firstorderspace}
	It holds for almost every $t\in [s,T]$
	\begin{equation}
		D^{1,-}_x V(t,\bar x_t) \subset \{ p_t \} \subset D^{1,+}_x V(t,\bar x_t)
	\end{equation}
	$\mathbb{P}$--almost surely.
\end{corollary}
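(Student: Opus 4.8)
The plan is to derive both inclusions from \Cref{spacetime} by specializing the parabolic viscosity superdifferential to purely spatial increments, and then to invoke the elementary fact that a spatial super- and subgradient at the same point must coincide.

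First I would show $\{p_t\} \subset D^{1,+}_x V(t,\bar x_t)$. Fix $t$ so that \eqref{4.84} holds $\mathbb{P}$-almost surely, and choose $G := -\langle \Delta \bar x_t, p_t\rangle_{H^{-1}(\Lambda)\times H^1_0(\Lambda)} - \mathcal{G}(t,\bar x_t,\bar u_t)$ and $P := P_t \in \mathcal{S}_{\succeq P_t}(L^2(\Lambda))$; then \eqref{4.84} gives $(G,p_t,P_t) \in D^{1,2,+}_{t+,x} V(t,\bar x_t)$. Specializing the limit superior in \Cref{parabolicviscositydifferential} to the pure spatial direction $\tau = 0$ (admissible in the parabolic limit, and a restriction can only lower the $\limsup$) yields
\begin{equation*}
\limsup_{z\to 0} \frac{V(t,\bar x_t+z) - V(t,\bar x_t) - \langle p_t, z\rangle_{L^2(\Lambda)} - \tfrac12\langle z, P_t z\rangle_{L^2(\Lambda)}}{\|z\|_{L^2(\Lambda)}^2} \leq 0.
\end{equation*}
Since $|\langle z, P_t z\rangle_{L^2(\Lambda)}| \leq \|P_t\|_{L^2(\Lambda^2)}\,\|z\|_{L^2(\Lambda)}^2$, the quadratic term is $O(\|z\|_{L^2(\Lambda)})$ after division by $\|z\|_{L^2(\Lambda)}$, while the remaining numerator equals $\|z\|_{L^2(\Lambda)}$ times a quantity of nonpositive limit superior and therefore also has nonpositive limit superior; hence
\begin{equation*}
\limsup_{z\to 0} \frac{V(t,\bar x_t+z) - V(t,\bar x_t) - \langle p_t, z\rangle_{L^2(\Lambda)}}{\|z\|_{L^2(\Lambda)}} \leq 0,
\end{equation*}
that is, $p_t \in D^{1,+}_x V(t,\bar x_t)$.

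Next I would prove $D^{1,-}_x V(t,\bar x_t) \subset \{p_t\}$. Let $p \in D^{1,-}_x V(t,\bar x_t)$, fix $e \in L^2(\Lambda)$ with $\|e\|_{L^2(\Lambda)} = 1$, and set $a(r) := (V(t,\bar x_t+re) - V(t,\bar x_t))/r$ for $r > 0$. Restricting the defining limits to $z = re$, $r\downarrow 0$, the superdifferential property of $p_t$ gives $\limsup_{r\downarrow 0} a(r) \leq \langle p_t, e\rangle_{L^2(\Lambda)}$, while the subdifferential property of $p$ gives $\liminf_{r\downarrow 0} a(r) \geq \langle p, e\rangle_{L^2(\Lambda)}$. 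Combining these with $\liminf \leq \limsup$ yields $\langle p - p_t, e\rangle_{L^2(\Lambda)} \leq 0$; replacing $e$ by $-e$ gives the reverse inequality, so $\langle p - p_t, e\rangle_{L^2(\Lambda)} = 0$ for every $e$, whence $p = p_t$. Together with the previous paragraph this proves the asserted chain of inclusions.

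The one delicate point is the passage from the parabolic (joint time--space) superdifferential to the purely spatial one. This is immediate if $\tau = 0$ is admitted in the limit of \Cref{parabolicviscositydifferential}; if one insists on $\tau > 0$, I would instead take $\tau = \|z\|_{L^2(\Lambda)}^{3} = o(\|z\|_{L^2(\Lambda)}^2)$ along the sequence realizing the limit superior, so that $\tau + \|z\|_{L^2(\Lambda)}^2 \sim \|z\|_{L^2(\Lambda)}^2$ and the term $G\tau$ is negligible, and absorb the time-increment $V(t+\tau,\bar x_t+z) - V(t,\bar x_t+z)$ using continuity of $V$; the remainder of the argument is unchanged. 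All assertions hold for almost every $t \in [s,T]$ and $\mathbb{P}$-almost surely, inherited directly from \Cref{spacetime}.
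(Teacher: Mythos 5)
Your proof is correct and follows essentially the same route as the paper: the paper's own proof consists exactly of restricting the $\limsup$ in \Cref{spacetime} to $\tau = 0$ and estimating $\langle z, P_t z \rangle_{L^2(\Lambda)} \leq \|P_t\|_{L^2(\Lambda^2)} \|z\|_{L^2(\Lambda)}^2$, with the uniqueness step $D^{1,-}_x V(t,\bar x_t) \subset \{p_t\}$ left implicit, which you spell out in the standard directional way. One minor caveat: your fallback argument for the reading of \Cref{parabolicviscositydifferential} in which $\tau > 0$ is insisted upon does not work as written, since mere continuity of $V$ gives no rate for the time increment $V(t+\tau,\bar x_t+z) - V(t,\bar x_t+z)$ and hence no control at order $o(\|z\|_{L^2(\Lambda)})$; this is moot, however, because the parabolic differential here (as in Yong--Zhou, and as the paper's own proof confirms by ``restricting to $\tau=0$'') admits $\tau = 0$, exactly as your main argument assumes.
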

This follows from Theorem \ref{spacetime} by restricting the $\limsup$ to $\tau = 0$ and estimating
\begin{equation}
	\langle z, P_t z \rangle_{L^2(\Lambda)} \leq \|P_t \|_{L^2(\Lambda^2)} \|z \|_{L^2(\Lambda)}^2.
\end{equation}
Next, we consider the second order viscosity differentials in the space-variable.
\begin{definition}
	For $v\in C([s,T]\times L^2(\Lambda))$ the second order viscosity superdifferential in the space-variable of $v$ at $(t,x)\in [s,T]\times L^2(\Lambda)$ is the set
	\begin{multline}
		D^{2,+}_{x} v(t,x) := \left \{ (p,P) \in L^2(\Lambda) \times \mathcal{S}(L^2(\Lambda)) \middle | \vphantom{\frac{v(\hat t,x) - v(\hat t, \hat x) - \langle p, x-\hat x\rangle_{L^2(\Lambda)} - \frac12 \langle x-\hat x, P(x-\hat x) \rangle_{L^2(\Lambda)}}{ \| x-\hat x\|_{L^2(\Lambda)}^2}}\right.\\
		\left. \limsup_{z\to 0} \frac{v(t,x+z) - v(t,x) - \langle p, z \rangle_{L^2(\Lambda)} - \frac12 \langle z, Pz \rangle_{L^2(\Lambda)}}{ \| z\|_{L^2(\Lambda)}^2} \leq 0 \right  \}.
	\end{multline}
	The second order viscosity subdifferential $D^{2,-}_{x} v$ is defined analogously with the $\limsup$ replaced by $\liminf$ and the $\leq$ replaced by $\geq$.
\end{definition}

\begin{corollary}\label{space}
	It holds for almost every $t\in [s,T]$
	\begin{equation}\label{claim1}
		\{ p_t \} \times \mathcal{S}_{\succeq P_t}(L^2(\Lambda)) \subset D_x^{2,+} V(t,\bar x_t),
	\end{equation}
	$\mathbb{P}$--almost surely, and for almost every $t\in [s,T]$
	\begin{equation}\label{claim2}
		D_x^{2,-} V(t,\bar x_t) \subset \{ p_t \} \times \mathcal{S}_{\preceq P_t}(L^2(\Lambda)).
	\end{equation}
	$\mathbb{P}$--almost surely, where
	\begin{equation}
		\mathcal{S}_{\succeq P_t}(L^2(\Lambda)) := \{ S\in \mathcal{S}(L^2(\Lambda)): S-P_t \text{ is a positive operator}\},
	\end{equation}
	and $\mathcal{S}_{\preceq P_t}(L^2(\Lambda))$ is defined analogously.
\end{corollary}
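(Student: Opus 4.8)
The plan is to deduce both inclusions directly from \Cref{spacetime}, treating the two halves asymmetrically. The spatial second-order superjet \eqref{claim1} will come from the parabolic superjet inclusion \eqref{4.84} simply by freezing the time-increment at $\tau=0$, whereas the subjet inclusion \eqref{claim2} will be obtained \emph{not} by an analogous restriction of \eqref{4.85}, but by pairing \eqref{claim1} against the defining inequality of $D^{2,-}_x$ through a two-sided Taylor comparison at the single point $(t,\bar x_t)$.

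For \eqref{claim1}, fix $S\in\mathcal{S}_{\succeq P_t}(L^2(\Lambda))$ and set $G_0 := -\langle\Delta\bar x_t,p_t\rangle_{H^{-1}(\Lambda)\times H^1_0(\Lambda)} - \mathcal{G}(t,\bar x_t,\bar u_t)$. By \eqref{4.84} the triple $(G_0,p_t,S)$ lies in $D^{1,2,+}_{t+,x}V(t,\bar x_t)$, so the defining $\limsup$ over the joint approach $\tau\downarrow 0$, $z\to 0$ is $\leq 0$. Restricting this approach to the slice $\tau=0$ can only decrease the $\limsup$, and along that slice the terms $G_0\tau$ and $V(t+\tau,\cdot)-V(t,\cdot)$ drop out, leaving exactly $\limsup_{z\to 0}\|z\|_{L^2(\Lambda)}^{-2}\big[V(t,\bar x_t+z)-V(t,\bar x_t)-\langle p_t,z\rangle_{L^2(\Lambda)}-\tfrac12\langle z,Sz\rangle_{L^2(\Lambda)}\big]\leq 0$, i.e. $(p_t,S)\in D^{2,+}_xV(t,\bar x_t)$. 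Letting $S$ range over $\mathcal{S}_{\succeq P_t}(L^2(\Lambda))$ gives \eqref{claim1}; in particular, taking $S=P_t$ shows $(p_t,P_t)\in D^{2,+}_xV(t,\bar x_t)$.

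For \eqref{claim2}, let $(p,P)\in D^{2,-}_xV(t,\bar x_t)$. Combining its defining $\liminf\geq 0$ with the superjet membership $(p_t,P_t)\in D^{2,+}_xV(t,\bar x_t)$ just obtained, for every $\varepsilon>0$ and all sufficiently small $z$ one has $\langle p,z\rangle_{L^2(\Lambda)}+\tfrac12\langle z,Pz\rangle_{L^2(\Lambda)}-\varepsilon\|z\|_{L^2(\Lambda)}^2 \leq V(t,\bar x_t+z)-V(t,\bar x_t) \leq \langle p_t,z\rangle_{L^2(\Lambda)}+\tfrac12\langle z,P_tz\rangle_{L^2(\Lambda)}+\varepsilon\|z\|_{L^2(\Lambda)}^2$, hence $\langle p-p_t,z\rangle_{L^2(\Lambda)}+\tfrac12\langle z,(P-P_t)z\rangle_{L^2(\Lambda)}\leq 2\varepsilon\|z\|_{L^2(\Lambda)}^2$. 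Inserting $z=\rho w$, dividing by $\rho$ and sending $\rho\downarrow 0$ forces $\langle p-p_t,w\rangle_{L^2(\Lambda)}\leq 0$ for all $w$, whence $p=p_t$; dividing instead by $\rho^2$ and sending $\rho\downarrow 0$ forces $\langle w,(P-P_t)w\rangle_{L^2(\Lambda)}\leq 0$, i.e. $P\in\mathcal{S}_{\preceq P_t}(L^2(\Lambda))$. This is \eqref{claim2}.

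The one point that requires care is precisely why I route \eqref{claim2} through \eqref{claim1} rather than restricting \eqref{4.85}: the restriction device works for the superjet because shrinking the approach set lowers a $\limsup$, but lifting a spatial subjet back to a parabolic subjet would instead demand a one-sided lower bound on the time-increment $V(t+\tau,\cdot)-V(t,\cdot)$ of the value function, a time-regularity that is delicate here because of the unbounded drift $\Delta$. The two-sided comparison at the single point $(t,\bar x_t)$ sidesteps this entirely, so that beyond \Cref{spacetime} the argument uses only an elementary Taylor expansion and the restriction to $\tau=0$.
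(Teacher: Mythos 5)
Your proof is correct and follows essentially the paper's own route: the superjet inclusion \eqref{claim1} is obtained, exactly as in the paper, by restricting the $\limsup$ in \eqref{4.84} to the slice $\tau=0$ (legitimate since $V$ is continuous in time, so the slice value is a limit of values with $\tau>0$). Your two-sided comparison for \eqref{claim2} is precisely the fleshing-out of the paper's terse ``restrict to $\tau=0$'' remark --- the subjet claim indeed cannot be read off \eqref{4.85} by naive restriction, for exactly the reason you identify, and pairing the spatial subjet definition against $(p_t,P_t)\in D^{2,+}_x V(t,\bar x_t)$ is the intended (and standard) way to close it.
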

The proof follows again from Theorem \ref{spacetime} by restricting the $\limsup$ to $\tau =0$.

\begin{remark}
	This result together with our main result Theorem \ref{spacetime} extends the necessary condition in Pontryagin's maximum principle by adjoint state inclusions. In the deterministic infinite dimensional case, the differential inclusion for the first order adjoint state was proven in \cite{cannarsa1992}. In \cite{zhou19912}, Zhou proved the corresponding result for the case of controlled SPDEs. However, going only to first order does not fully reflect the stochastic nature of the problem. In the finite dimensional case, Zhou proved the differential inclusion for the second order adjoint state in \cite{zhou1991}. The main obstacle in the generalization of this result to infinite dimensions was the missing representation of the second order adjoint state, which appears due to the It\^o correction term arising in stochastic calculus.
\end{remark}

\subsection{Time-Derivatives}\label{sectiontime}

In this section, we consider the case with differentials only in the time-variable.
\begin{definition}
	For $v\in C([s,T]\times L^2(\Lambda))$ the first order viscosity superdifferential in the time-variable of $v$ at $(t,x)\in [s,T)\times L^2(\Lambda)$ is the set
	\begin{equation}
		D^{1,+}_{t+} v(t,x) := \left \{ G \in \mathbb{R} \middle |	\limsup_{\tau\downarrow 0} \frac{v(t+\tau,x) - v(t,x) - G\tau}{\tau} \leq 0 \right  \}.
	\end{equation}
	The first order viscosity subdifferential $D^{1,-}_{t+} v$ is defined analogously with the $\limsup$ replaced by $\liminf$ and the $\leq$ replaced by $\geq$.
\end{definition}

\begin{corollary}\label{time}
	It holds for almost every $t\in [s,T]$
	\begin{equation}\label{4.67}
		- \langle \Delta\bar x_t, p_t\rangle_{H^{-1}(\Lambda)\times H^1_0(\Lambda)} - \mathcal{G}(t,\bar x_t, \bar u_t) \in D_{t+}^{1,+} V(t,\bar x_t)
	\end{equation}
	$\mathbb{P}$--almost surely.
\end{corollary}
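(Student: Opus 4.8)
The plan is to read this statement off directly from the main theorem \Cref{spacetime}, specializing the parabolic superdifferential inclusion \eqref{4.84} to the purely temporal approach direction $z=0$. Write $G_0 := - \langle \Delta\bar x_t, p_t\rangle_{H^{-1}(\Lambda)\times H^1_0(\Lambda)} - \mathcal{G}(t,\bar x_t, \bar u_t)$ for the claimed element of $D^{1,+}_{t+} V(t,\bar x_t)$. First I would observe that $G_0$ is the left endpoint of the interval $[G_0,\infty)$ and that $P_t \in \mathcal{S}_{\succeq P_t}(L^2(\Lambda))$, since $P_t - P_t = 0$ is trivially a positive operator. Hence the triple $(G_0, p_t, P_t)$ belongs to the set on the left-hand side of \eqref{4.84}, and \Cref{spacetime} yields $(G_0,p_t,P_t)\in D^{1,2,+}_{t+,x} V(t,\bar x_t)$ for almost every $t\in[s,T]$, $\mathbb{P}$--almost surely; the quantifiers ``for a.e.\ $t$'' and ``$\mathbb{P}$--a.s.'' are therefore inherited directly.

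By \Cref{parabolicviscositydifferential}, this membership unpacks to
\begin{equation*}
\limsup_{\tau\downarrow 0, z\to 0} \frac{1}{\tau + \| z \|_{L^2(\Lambda)}^2}\left [ V(t+\tau,\bar x_t+z) - V(t, \bar x_t) - G_0\tau - \langle p_t, z \rangle_{L^2(\Lambda)} - \frac12 \langle z, P_t z \rangle_{L^2(\Lambda)} \right ] \leq 0.
\end{equation*}
The next step is to restrict this joint limit superior to the coordinate direction $z=0$. Along that path the denominator becomes $\tau + \|z\|_{L^2(\Lambda)}^2 = \tau$, and the two terms $\langle p_t, z\rangle_{L^2(\Lambda)}$ and $\frac12\langle z, P_t z\rangle_{L^2(\Lambda)}$ vanish, so the difference quotient collapses to
\begin{equation*}
\frac{V(t+\tau,\bar x_t) - V(t,\bar x_t) - G_0\tau}{\tau},
\end{equation*}
which is precisely the quantity appearing in the definition of $D^{1,+}_{t+} V(t,\bar x_t)$.

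Finally I would invoke the elementary fact that a limit superior taken over a subset of approach directions is dominated by the one over all directions: restricting $(\tau,z)\to 0$ to the line $z=0$ can only decrease the value. Combining this with the displayed inequality gives
\begin{equation*}
\limsup_{\tau\downarrow 0} \frac{V(t+\tau,\bar x_t) - V(t,\bar x_t) - G_0\tau}{\tau} \leq 0,
\end{equation*}
which is exactly the condition for $G_0 \in D^{1,+}_{t+} V(t,\bar x_t)$. No substantial obstacle arises here: the whole argument is a specialization of \Cref{spacetime}, mirroring the way \Cref{firstorderspace} and \Cref{space} are obtained by instead setting $\tau=0$. The only point requiring even minor care is the monotonicity of $\limsup$ under restriction of the approach region, which is what legitimizes the passage from the parabolic to the purely temporal superdifferential.
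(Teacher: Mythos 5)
Your proposal is correct and coincides with the paper's own proof, which consists precisely of the observation that $(G_0,p_t,P_t)$ lies in the left-hand side of \eqref{4.84} and that restricting the $\limsup$ in \Cref{parabolicviscositydifferential} to the path $z=0$ can only decrease it, yielding the defining inequality for $D^{1,+}_{t+}V(t,\bar x_t)$. The paper states this in one line; your write-up is simply a more detailed rendering of the same specialization argument used for \Cref{firstorderspace} and \Cref{space}.
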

The proof follows from Theorem \ref{spacetime} by restricting the $\limsup$ to $z=0$.

\subsection{Non-Positivity of the Correction Term}\label{nonpositivity}

As another corollary of Theorem \ref{spacetime}, we derive non-positivity of the correction term arising in non-smooth stochastic control problems.
\begin{corollary}\label{gleqh}
	Let
	\begin{equation}\label{higherregularity}
		\begin{cases}
			G\in L^2([s,T]\times\Omega;\mathbb{R})\\
			p\in L^2([s,T]\times\Omega;H^1_0(\Lambda))\\
			P\in L^2([s,T]\times\Omega;L_2(L^2(\Lambda)))
		\end{cases}
	\end{equation}
	be adapted processes such that for almost every $t\in [s,T]$
	\begin{equation}
		(G_t,p_t,P_t) \in D^{1,2,+}_{t+,x} V(t, \bar x_t)
	\end{equation}
	$\mathbb{P}$-almost surely.	Then it holds for almost every $t\in [s,T]$
	\begin{equation}\label{necessary}
		G_t + \langle \Delta \bar x_t, p_t\rangle_{H^{-1}(\Lambda)\times H^1_0(\Lambda)} + \mathcal{H}( \bar x_t, \bar u_t,p_t,P_t) \geq 0,
	\end{equation}
	$\mathbb{P}$--almost surely.
\end{corollary}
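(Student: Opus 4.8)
The plan is to probe the superdifferential along the optimal trajectory and to combine it with the dynamic programming principle in its sharp (equality) form; note that $(G_t,p_t,P_t)$ is an \emph{arbitrary} element of the parabolic superdifferential, so the statement cannot be read off directly from the specific adjoint inclusion \eqref{4.84}. Since $\bar u$ is optimal, the optimality principle yields, for almost every $t\in[s,T]$ and every $\tau>0$,
\[
V(t,\bar x_t)=\mathbb{E}_t\left[\int_t^{t+\tau}\int_{\Lambda} l(\bar x_r(\lambda),\bar u_r)\,\mathrm{d}\lambda\,\mathrm{d}r+V(t+\tau,\bar x_{t+\tau})\right]
\]
$\mathbb{P}$-almost surely, in contrast to the mere inequality used in the proof of \Cref{spacetime}. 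On the other hand, the hypothesis $(G_t,p_t,P_t)\in D^{1,2,+}_{t+,x}V(t,\bar x_t)$ provides, for $\mathbb{P}$-almost every $\omega$, the pointwise bound
\[
V(t+\tau,\bar x_t+z)-V(t,\bar x_t)\le G_t\tau+\langle p_t,z\rangle_{L^2(\Lambda)}+\frac12\langle z,P_tz\rangle_{L^2(\Lambda)}+o\left(\tau+\|z\|_{L^2(\Lambda)}^2\right)
\]
valid for $\tau$ and $z$ in a (random) neighbourhood of the origin. The central step is to insert the random increment $z=\bar x_{t+\tau}-\bar x_t$, take the conditional expectation $\mathbb{E}_t$, and use the identity above to replace $\mathbb{E}_t[V(t+\tau,\bar x_{t+\tau})]-V(t,\bar x_t)$ by $-\mathbb{E}_t[\int_t^{t+\tau}\int_{\Lambda} l\,\mathrm{d}\lambda\,\mathrm{d}r]$.

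I would then divide by $\tau$ and let $\tau\downarrow 0$, evaluating the resulting increments as in \Cref{firsttimeincrement} and \Cref{secondtimeincrement}. Splitting the increment as $\bar x_{t+\tau}-\bar x_t=D_\tau+N_\tau$ with drift part $D_\tau=\int_t^{t+\tau}(\Delta\bar x_r+b(\bar x_r,\bar u_r))\,\mathrm{d}r$ and martingale part $N_\tau=\int_t^{t+\tau}\sigma(\bar x_r,\bar u_r)\,\mathrm{d}W_r$, the linear term is $\frac1\tau\langle p_t,\mathbb{E}_t[D_\tau]\rangle$, which by Lebesgue's differentiation theorem converges to $\langle\Delta\bar x_t,p_t\rangle_{H^{-1}(\Lambda)\times H^1_0(\Lambda)}+\langle p_t,b(\bar x_t,\bar u_t)\rangle_{L^2(\Lambda)}$; here the regularity $p_t\in H^1_0(\Lambda)$ from \eqref{higherregularity} is precisely what renders the pairing with $\Delta\bar x_r\in H^{-1}(\Lambda)$ meaningful. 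For the quadratic term I would exploit that $\bar x_t\in H^1_0(\Lambda)$ for almost every $t$, so that strong continuity and smoothing of the heat semigroup $S$ give $\mathbb{E}_t[\|D_\tau\|_{L^2(\Lambda)}^2]=o(\tau)$ — the leading contribution $\|(S_\tau-\id)\bar x_t\|_{L^2(\Lambda)}^2$ being $o(\tau)$ by a dominated-convergence argument in the Dirichlet eigenbasis (using $(1-e^{-a\tau})^2\le a\tau$). Hence the drift--drift and drift--noise contributions vanish after rescaling, only the It\^o correction of $N_\tau$ survives, and $\frac{1}{2\tau}\mathbb{E}_t[\langle\bar x_{t+\tau}-\bar x_t,P_t(\bar x_{t+\tau}-\bar x_t)\rangle_{L^2(\Lambda)}]\to\frac12\,\text{tr}(\sigma(\bar x_t,\bar u_t)^{\ast}P_t\sigma(\bar x_t,\bar u_t))$. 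Combined with $\frac1\tau\mathbb{E}_t[\int_t^{t+\tau}\int_\Lambda l\,\mathrm{d}\lambda\,\mathrm{d}r]\to\int_\Lambda l(\bar x_t,\bar u_t)\,\mathrm{d}\lambda$, collecting the limits and recognising the surviving terms as $\mathcal{H}(\bar x_t,\bar u_t,p_t,P_t)$ leaves
\[
-\int_\Lambda l(\bar x_t,\bar u_t)\,\mathrm{d}\lambda-\langle p_t,b(\bar x_t,\bar u_t)\rangle_{L^2(\Lambda)}-\frac12\text{tr}(\sigma(\bar x_t,\bar u_t)^{\ast}P_t\sigma(\bar x_t,\bar u_t))-\langle\Delta\bar x_t,p_t\rangle_{H^{-1}(\Lambda)\times H^1_0(\Lambda)}-G_t\le 0,
\]
which upon multiplying by $-1$ is exactly \eqref{necessary}.

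The hard part is the rigorous execution of the substitution in the first paragraph: plugging a random increment into a bound that is only guaranteed on an $\omega$-dependent neighbourhood of the origin, and then interchanging $\mathbb{E}_t$ with the limit $\tau\downarrow 0$. I would handle this by recording the superdifferential remainder as $\theta_\omega(\tau+\|z\|_{L^2(\Lambda)}^2)$ with $\theta_\omega(r)/r\to 0$ as $r\downarrow 0$, and splitting $\Omega$ according to whether $\tau+\|\bar x_{t+\tau}-\bar x_t\|_{L^2(\Lambda)}^2$ lies inside the neighbourhood of validity. On the good event, Cauchy--Schwarz together with the moment bounds $\mathbb{E}_t[\|\bar x_{t+\tau}-\bar x_t\|_{L^2(\Lambda)}^{2k}]\le C\tau^k$ from \eqref{timeregularityx} shows the rescaled remainder is $o(1)$; on the complementary event, whose conditional probability is $O(\tau^k)$ for every $k$ by the same bounds and Chebyshev's inequality, the at most quadratic growth of $V$ and H\"older's inequality make the contribution $o(\tau)$. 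Sending the auxiliary parameter hidden in the $o$-modulus to zero then gives \eqref{necessary} for almost every $t\in[s,T]$, $\mathbb{P}$-almost surely.
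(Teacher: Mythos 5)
Your strategy is genuinely different from the paper's. The paper never substitutes the random increment into the superdifferential inequality: it builds an explicit Fleming--Soner-type test function $\phi$ out of the modulus of the superdifferential, replaces $P_t$ by finite-rank approximations $P^n_t$ with range in $H^1_0(\Lambda)$ so that It\^o's formula can be applied to $\phi^n(r,\bar x_r)$ on the regular conditional probability space, and passes to the limit through the delicate \Cref{asconvergence}. Your route --- the dynamic programming principle in equality form along the optimal pair, insertion of $z=\bar x_{t+\tau}-\bar x_t$, conditional expectation, and a good/bad event splitting for the remainder --- is the ``direct'' argument whose naive version is precisely the flawed step in \cite[Chapter 5, Lemma 5.2]{yong1999} that the paper is at pains to avoid. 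Your handling of that step is nevertheless sound: conditionally on $\mathcal{F}^s_{\nu,t}$ the data $(\bar x_t,G_t,p_t,P_t)$ are constants, the canonical monotone modulus $\theta$ of the superdifferential satisfies $\theta(r)\leq C_{\omega}(1+r)$ globally (quadratic growth of $V$) and $\theta(r)/r\to 0$ near $0$, and the all-order conditional moment bounds \eqref{timeregularityx} together with Chebyshev's inequality make the bad event contribute $o(\tau)$. If completed, this would be a real simplification, since it avoids both the test-function construction and the finite-rank approximation of $P$.

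There is, however, a concrete gap in the quadratic term. You split $\bar x_{t+\tau}-\bar x_t=D_\tau+N_\tau$ with the \emph{variational} drift $D_\tau=\int_t^{t+\tau}(\Delta\bar x_r+b(\bar x_r,\bar u_r))\,\mathrm{d}r$ and claim $\mathbb{E}_t[\|D_\tau\|_{L^2(\Lambda)}^2]=o(\tau)$, justifying it only by the spectral estimate for $(S_\tau-\id)\bar x_t$. But the variational drift is not the mild drift: by Duhamel's formula and stochastic Fubini,
\begin{equation*}
D_\tau=(S_\tau-\id)\bar x_t+\int_t^{t+\tau}S_{t+\tau-r}\,b(\bar x_r,\bar u_r)\,\mathrm{d}r+\int_t^{t+\tau}(S_{t+\tau-r}-\id)\,\sigma(\bar x_r,\bar u_r)\,\mathrm{d}W_r,
\end{equation*}
and the stochastic convolution term has conditional second moment $\mathbb{E}_t[\int_t^{t+\tau}\|(S_{t+\tau-r}-\id)\sigma(\bar x_r,\bar u_r)\|_{L_2(\Xi,L^2(\Lambda))}^2\mathrm{d}r]$, which the stated bounds control only by $O(\tau)$, not $o(\tau)$. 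Without $o(\tau)$ here, the cross term $\tau^{-1}\mathbb{E}_t[\langle D_\tau,P_tN_\tau\rangle_{L^2(\Lambda)}]$ need not vanish, so the It\^o correction $\mathrm{tr}(\sigma^{\ast}P_t\sigma)$ is not isolated and the proof stalls. Two repairs are available. (i) Keep your decomposition and prove the convolution estimate: for almost every $t$ one has $\bar x_t\in H^1_0(\Lambda)$, and since $\sigma_x$ is bounded the Nemytskii operator satisfies $\|\sigma(\bar x_t,u)\|_{L_2(\Xi,H^{\gamma}_0(\Lambda))}\leq C(1+\|\bar x_t\|_{H^1_0(\Lambda)}+\|u\|_{\mathcal{U}})$ uniformly for $\gamma\in(0,1/2)$, whence $\|(S_h-\id)\sigma(\bar x_t,u)\|_{L_2(\Xi,L^2(\Lambda))}\leq Ch^{\gamma/2}(1+\|\bar x_t\|_{H^1_0(\Lambda)}+\|u\|_{\mathcal{U}})$; the difference $\sigma(\bar x_r,\bar u_r)-\sigma(\bar x_t,\bar u_r)$ is handled by Lipschitz continuity and \eqref{timeregularityx}. (ii) Alternatively, organize the whole computation around the mild decomposition itself: the mild drift is $o(\tau^{1/2})$ in $L^2(\Lambda)$ exactly as you argued, the cross terms vanish by conditional centering and Cauchy--Schwarz, and the noise contribution $\tau^{-1}\mathbb{E}_t[\int_t^{t+\tau}\mathrm{tr}(\sigma_r^{\ast}S_{t+\tau-r}P_tS_{t+\tau-r}\sigma_r)\mathrm{d}r]$ converges to $\mathrm{tr}(\sigma(\bar x_t,\bar u_t)^{\ast}P_t\sigma(\bar x_t,\bar u_t))$ because $\|S_hP_tS_h-P_t\|_{L(L^2(\Lambda))}\to0$ as $h\downarrow0$ by compactness of the Hilbert--Schmidt operator $P_t$, combined with the Lebesgue-point arguments the paper itself uses in \Cref{firsttimeincrement} and \Cref{secondtimeincrement}. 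With either repair your argument is complete.
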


\begin{remark}
	The higher regularity assumptions $p$ and $P$ in \eqref{higherregularity} are necessary due to the unbounded term in \eqref{necessary}. Notice that the adjoint states given by \eqref{firstadjoint} and \eqref{secondadjoint}, respectively, satisfy this higher regularity. In case $p$ and $P$ are the adjoint states, and
	\begin{equation}
		G_t = - \langle \Delta \bar x_t, p_t\rangle_{H^{-1}(\Lambda)\times H^1_0(\Lambda)} - \mathcal{G}(t,\bar x_t, \bar u_t),
	\end{equation}
	equation \eqref{necessary} is equivalent to
	\begin{equation}
		\mathcal{G}(t,\bar x_t, \bar u_t) \leq \mathcal{H}(t, \bar x_t, \bar u_t, p_t, P_t).
	\end{equation}
\end{remark}

The proof of Corollary \ref{gleqh} in the finite dimensional case uses the fact that $V$ is a viscosity solution of the HJB equation combined with the following correspondence between test functions and points in the parabolic viscosity superdifferential, see e.g. \cite[Chapter V, Lemma 4.1]{fleming2006}.

\begin{proposition}\label{testfunction}
	Let $v:(s,T)\times L^2(\Lambda)\to \mathbb{R}$ be upper semicontinuous. For $(t,x)\in (s,T)\times L^2(\Lambda)$, it holds $(G,p,P) \in D_{t+,x}^{1,2,+} v(t,x)$ if and only if there exists a function $\phi \in C^{1,2}((s,T)\times L^2(\Lambda))$ such that $v-\phi$ attains its strict global maximum over the set $[t,T)\times L^2(\Lambda)$ at the point $(t,x)$, and
	\begin{equation}\label{testfunctionproperty}
		(\phi(t,x), \partial_t\phi(t,x), D\phi (t,x), D^2 \phi(t,x)) = (v(t,x),G,p,P).
	\end{equation}
\end{proposition}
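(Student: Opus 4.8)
The plan is to prove the two implications separately: the ``if'' direction is an immediate second-order Taylor expansion, while the ``only if'' direction requires constructing the test function explicitly from a modulus of continuity. For the ``if'' direction, suppose a $\phi\in C^{1,2}$ as described exists. Since $v-\phi$ attains its global maximum at $(t,x)$, every admissible increment satisfies $v(t+\tau,x+z)-v(t,x)\le \phi(t+\tau,x+z)-\phi(t,x)$. Expanding $\phi$ to second order at $(t,x)$ and inserting the identification \eqref{testfunctionproperty} gives $v(t+\tau,x+z)-v(t,x)-G\tau-\langle p,z\rangle_{L^2(\Lambda)}-\tfrac12\langle z,Pz\rangle_{L^2(\Lambda)}\le o(\tau+\|z\|_{L^2(\Lambda)}^2)$, so dividing by $\tau+\|z\|_{L^2(\Lambda)}^2$ and passing to the $\limsup$ yields $(G,p,P)\in D^{1,2,+}_{t+,x}v(t,x)$.

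For the ``only if'' direction I would first normalize. Subtracting the polynomial $\Pi(s,y):=v(t,x)+G(s-t)+\langle p,y-x\rangle_{L^2(\Lambda)}+\tfrac12\langle y-x,P(y-x)\rangle_{L^2(\Lambda)}$, which lies in $C^{1,2}$ and carries exactly the prescribed $1$-jet at $(t,x)$, and setting $w:=v-\Pi$, the membership condition becomes $\limsup_{\tau\downarrow0,z\to0}w(t+\tau,x+z)/(\tau+\|z\|_{L^2(\Lambda)}^2)\le0$ with $w(t,x)=0$. It then suffices to find $\psi\in C^{1,2}$ with vanishing value and vanishing first and second derivatives at $(t,x)$ such that $w-\psi$ has a strict global maximum there, for then $\phi:=\Pi+\psi$ is the required test function. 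I would introduce the modulus $m(r):=\big(\sup\{w(t+\tau,x+z):\tau\ge0,\ \tau+\|z\|_{L^2(\Lambda)}^2\le r\}\big)\vee0$, which is non-decreasing and, by the $\limsup$ condition, satisfies $m(r)/r\to0$ as $r\downarrow0$. A standard one-dimensional smoothing produces $f\in C^2([0,\infty))$ with $f(0)=f'(0)=0$, $f'\ge0$, and $f\ge m$; composing with the smooth Hilbert-space square-norm, I set $\psi(s,y):=f\big((s-t)+\|y-x\|_{L^2(\Lambda)}^2\big)$. Since $y\mapsto\|y-x\|_{L^2(\Lambda)}^2$ is $C^\infty$ and $f\in C^2$, we have $\psi\in C^{1,2}$, and the chain rule together with $f(0)=f'(0)=0$ shows that $\psi$ has the required vanishing $1$-jet at $(t,x)$, while $\psi\ge w$ near $(t,x)$ by construction.

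Strictness would be obtained by adding a non-negative $C^{1,2}$ term with vanishing $1$-jet that is strictly positive away from $(t,x)$, such as $(s-t)^2+\|y-x\|_{L^2(\Lambda)}^4$, turning the local maximum into a strict one; globality requires an additional coercive correction that again leaves the $1$-jet unchanged. The main obstacle I anticipate is precisely this passage from a strict \emph{local} maximum to a strict \emph{global} maximum over the unbounded, non-locally-compact set $[t,T)\times L^2(\Lambda)$: one must dominate the merely upper semicontinuous $w$ far from $(t,x)$ by a coercive $C^{1,2}$ penalty while preserving the full $1$-jet, which is delicate in the absence of local compactness. This is exactly the step that breaks down once the unbounded operator $\Delta$ enters the Hamilton--Jacobi--Bellman equation, and is the reason the verification theorem in Section \ref{verificationtheorem} avoids this correspondence in favour of an explicit classical approximation of the value function along the regular state trajectory.
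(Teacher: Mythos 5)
Your ``if'' direction is correct and standard. The genuine gap is in the ``only if'' direction, at the smoothing step, and it is not a technicality: the object you claim exists does not exist in general. Take $m(r)=r^{3/2}$, which is a perfectly admissible modulus ($m$ non-decreasing, $m(r)/r\to 0$; it is realized, e.g., by $w(t+\tau,x+z)=(\tau+\|z\|_{L^2(\Lambda)}^2)^{3/2}$, i.e.\ by a continuous $v$). Any $f\ge m$ with $f(0)=f'(0)=0$ satisfies $f(r)/r^2\ge r^{-1/2}\to\infty$, so $f$ cannot be twice differentiable at $0$; there is no such $f\in C^2([0,\infty))$. This is precisely the difficulty that the construction the paper relies on is built to circumvent (the paper quotes \cite[Chapter V, Lemma 4.1]{fleming2006} for \Cref{testfunction} and carries the construction out explicitly in the proofs of \Cref{gleqh} and \Cref{asconvergence}): there one takes the modulus $g(\beta)$ of the \emph{quotient} $w^+/(\tau^2+\|z\|^4)^{1/2}$ rather than of $w$ itself, forms the doubly averaged primitive $F(a)=\frac{2}{3a}\int_a^{2a}\int_\xi^{2\xi}g(\beta)\,\mathrm{d}\beta\,\mathrm{d}\xi$, which satisfies $F(a)\ge a\,g(a)$, $F(0)=F'(0)=0$, $|F'(a)|\to 0$ and $a|F''(a)|\to 0$ but is \emph{not} $C^2$ up to $a=0$, and --- crucially --- composes $F$ not with your $\rho(\theta,y)=(\theta-t)+\|y-x\|^2$ but with $a(\theta,y)=((\theta-t)^2+\|y-x\|^4)^{1/2}$. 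The quartic-in-space, square-root structure gives $\|Da\|\le 2\|y-x\|$ and $\|D^2a\|$ bounded, whence $\|D^2(F\circ a)\|\lesssim a|F''(a)|+|F'(a)|\to 0$; so $F\circ a\in C^{1,2}$ with vanishing derivatives at $(t,x)$ even though $F''(0)$ does not exist.

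Your composite $\rho=(s-t)+\|y-x\|^2$ fails for two further reasons. First, $\phi$ must be $C^{1,2}$ on all of $(s,T)\times L^2(\Lambda)$, so $f$ must be defined for negative arguments. Second, and worse, the zero set of $\rho$ is not the single point $(t,x)$ but the whole backward paraboloid $\{s-t=-\|y-x\|_{L^2(\Lambda)}^2\}$; since by the counterexample above $f''$ must blow up at $0$, the expression $D^2\psi=4f''(\rho)\,(y-x)\otimes(y-x)+2f'(\rho)I$ blows up along that set, where $\|y-x\|$ is of order one, so $\psi\notin C^{1,2}$. Repairing both defects forces you essentially back to the symmetric quartic $a$. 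Finally, your closing diagnosis misplaces the difficulty. With the quotient modulus, $w\le a\,g(a)\le F(a)$ holds on the entire region where $g$ is finite, so under quadratic growth (which the value function has) the maximum is \emph{automatically global}, and strictness follows by adding $(s-t)^2+\|y-x\|^4$ as you propose; no coercive correction is needed there. The reason the paper cannot simply invoke \Cref{testfunction} in its HJB analysis is not the local-to-global passage but that viscosity solutions of HJB equations containing the unbounded term $\langle \Delta x, D\varphi\rangle$ are defined through a restricted class of test functions (\cite[Definition 3.32]{fabbri2017}), to which a generic $C^{1,2}$ function produced by the proposition need not belong.
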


\begin{remark}\label{remarktestfunction}
	The proof of Proposition \ref{testfunction} in the infinite dimensional case follows along the same lines as in the finite dimensional case, see \cite[Proposition 2.1]{chen2022}. However, in the theory of $B$-continuous viscosity solutions for equations involving unbounded operators, test functions are of the form $\psi=\varphi +h(t,\|x\|_{L^2(\Lambda)})$ with additional assumptions imposed on $\varphi$ and $h$. In particular, $\Delta D\varphi$ is required to be continuous, see Definition \ref{testfunctiondefinition} below. If $p$ is not in the domain of the Laplace operator, equation \eqref{testfunctionproperty} cannot be satisfied for such a test function. Therefore, in the proof of Corollary \ref{gleqh}, we cannot apply Proposition \ref{testfunction} to obtain a test function in the sense of Definition \ref{testfunctiondefinition}. Instead, we have to carry out the argument from the finite dimensional case by hand. In addition to dealing with technical difficulties already arising in the proof of the verification theorem within the framework of viscosity solutions (see \cite{gozzi2005}), we have to perform a delicate regularity analysis due to the unbounded operator in the state equation \eqref{stateequation}.
\end{remark}

Now let us get to the proof of Corollary \ref{gleqh}.
\begin{proof}
	Fix $t \in[s,T]$ such that
	\begin{equation}
		(G_t,p_t,P_t) \in D^{1,2,+}_{t+,x} V(t,\bar x_t)
	\end{equation} 
	$\mathbb{P}$--almost surely. Following the idea from the finite dimensional case (see \cite[Chapter V, Lemma 4.1]{fleming2006}), we define for $\beta>0$
	\begin{multline}
		g(\beta) := \sup \Bigg \{ \frac{\left (V(t+\tau,\bar x_t+z) - V(t,\bar x_t) - G_t \tau - \langle p_t,z \rangle_{L^2(\Lambda)} - \frac12 \langle z, P_t z \rangle_{L^2(\Lambda)} \right )^+}{\left (\tau^2+\|z\|_{L^2(\Lambda)}^4 \right )^{\frac12}} \Bigg |\\
		(t+\tau,z)\in (s,T)\times L^2(\Lambda), 0<(\tau^2+\|z\|_{L^2(\Lambda)}^4)^{\frac12} \leq \beta \Bigg \},
	\end{multline}
	and set $g(0):=0$. Since
	\begin{equation}
		\limsup_{\tau\downarrow 0,z\to 0} \frac{\tau+\|z\|_{L^2(\Lambda)}^2}{\left (\tau^2 + \|z\|_{L^2(\Lambda)}^4 \right )^{\frac12}} <\infty
	\end{equation}
	and $(G_t,p_t,P_t)\in D^{1,2,+}_{t+,x} V(t,\bar x_t)$, $g$ is continuous and non-decreasing on $[0,\infty)$. Using $g$, we define
	\begin{equation}\label{aandf}
		\begin{split}
			a(\theta,x) &:= \left ( (\theta-t)^2 +\|x-\bar x_t\|_{L^2(\Lambda)}^4 \right )^{\frac12}\\
			F(a) &:= \frac{2}{3a} \int_a^{2a} \int_{\xi}^{2\xi} g(\beta) \mathrm{d}\beta \mathrm{d}\xi, \quad F(0)=0
		\end{split}
	\end{equation}
	and construct the test function $\phi : (s,T)\times L^2(\Lambda) \to \mathbb{R}$ as
	\begin{multline}
		\phi(\theta,x) := F(a(\theta,x)) + V(t,\bar x_t) + G_t(\theta-t)\\
		+ \langle p_t ,x-\bar x_t \rangle_{L^2(\Lambda)} + \frac12 \langle x-\bar x_t, P_t(x-\bar x_t) \rangle_{L^2(\Lambda)}.
	\end{multline}
	In order to obtain higher regularity for the term $D\phi (r,\bar x_r)$, we replace the process $P$ by an approximation.	Let $(e_l)_{l\geq 1} \subset H^1_0(\Lambda)$ be an orthonormal basis of $L^2(\Lambda)$ and define $P^n x := \sum_{l=1}^n \langle Px, e_l \rangle_{L^2(\Lambda)} e_l$. Then we have for every $n\in \mathbb{N}$
	\begin{itemize}
		\item $P^n \in L^2([0,T]\times \Omega ; L(L^2(\Lambda)))$, and $\|P^n_t \|_{L_2(L^2(\Lambda))} \leq \|P_t \|_{L_2(L^2(\Lambda))}$ $\mathrm{d}t\otimes\mathbb{P}$--almost everywhere;
		\item $P^n_t(H^1_0(\Lambda)) \subset H^1_0(\Lambda)$ $\mathrm{d}t\otimes\mathbb{P}$--almost everywhere;
		\item $P^n_t \to P_t$ in the uniform operator topology $\mathrm{d}t\otimes\mathbb{P}$--almost everywhere.
	\end{itemize}
	Note that these conditions also imply $P^n \to P$ in $L^2([0,T]\times\Omega; L(L^2(\Lambda)))$. We approximate $\phi$ by
	\begin{multline}\label{phin}
		\phi^n(\theta,x) := F(a(\theta,x)) + V(t, x^{\ast}_t) + G_t(\theta-t)\\
		+ \langle p_t ,x- x^{\ast}_t \rangle_{L^2(\Lambda)} + \frac12 \langle x- x^{\ast}_t, P^n_t(x- x^{\ast}_t) \rangle_{L^2(\Lambda)}.
	\end{multline}
	
	Since $V-\phi$ attains its maximum at $(t,\bar x_t)$ and by the dynamic programming principle, we have for every $\tau \geq 0$,
	\begin{equation}\label{viscosity}
		\begin{split}
			0 &\geq \mathbb{E}^s_{\nu,t} \left [ V(t+\tau,\bar x_{t+\tau}) - \phi(t+\tau,\bar x_{t+\tau}) - (V(t,\bar x_t) - \phi(t,\bar x_t)) \right ]\\
			&= \mathbb{E}^s_{\nu,t} \left [ - \int_t^{t+\tau} \int_{\Lambda} l(\bar x_r(\lambda),\bar u_r) \mathrm{d}\lambda \mathrm{d}r - \phi^{n}(t+\tau,\bar x_{t+\tau}) + \phi^{n}(t,\bar x_t) \right ]\\
			&\quad + \mathbb{E}^s_{\nu,t} \left [ \phi^{n}(t+\tau,\bar x_{t+\tau}) - \phi(t+\tau,\bar x_{t+\tau}) - \phi^{n}(t,\bar x_t) + \phi(t,\bar x_t) \right ].
		\end{split}
	\end{equation}
	Now, we want to apply It\^o's formula to $\phi^n$. However, $\phi^n$ implicitly depends on $\omega$ via $\bar x_t$, $G_t$, $p_t$ and $P_t$. Therefore, we fix an $\omega\in\Omega$ and switch to the probability space $(\Omega,\mathcal{F}^{\nu}, \mathbb{P}(\,\cdot\,|\mathcal{F}^s_{\nu,t})(\omega))$, where $\mathbb{P}(\,\cdot\,|\mathcal{F}^s_{\nu,t})(\cdot)$ denotes the regular conditional probability given $\mathcal{F}^s_{\nu,t}$. On this probability space, $\bar x_t$, $G_t$, $p_t$ and $P_t$ are almost surely constant, and are equal to $\bar x_t(\omega)$, $G_t(\omega)$, $p_t(\omega)$ and $P_t(\omega)$. See also \cite{gozzi2005} for more details on this. In the following, we denote by $\mathbb{E}^s_{\nu,t} [\, \cdot\, ](\omega)$ the expectation with respect to $\mathbb{P}(\,\cdot\,|\mathcal{F}^s_{\nu,t})(\omega)$. Thus, we derive
	\begin{equation}\label{itosformula}
		\begin{split}
			& \mathbb{E}^s_{\nu,t} \left [ \phi^{n}(t+\tau,\bar x_{t+\tau}) - \phi^{n}(t,\bar x_t) \right ](\omega)\\
			&= \mathbb{E}^s_{\nu,t} \left [ \int_t^{t+\tau} \partial_{\theta} \phi^{n}(r,\bar x_r) + \langle D \phi^{n}(r,\bar x_r), \Delta \bar x_r + b(\bar x_r,\bar u_r) \rangle_{H^1_0(\Lambda)\times H^{-1}(\Lambda)} \right.\\
			&\qquad\qquad\qquad\qquad\qquad\qquad + \left. \frac12 \vphantom{\int_t^{t+\tau}} \text{tr}\left (\sigma(\bar x_r,\bar u_r)^{\ast} D^2 \phi^{n}(r,\bar x_r) \sigma(\bar x_r,\bar u_r) \right ) \mathrm{d}r \right ](\omega).
		\end{split}
	\end{equation}
	From Lemma \ref{asconvergence} it follows
	\begin{equation}
		\begin{split}
			&\mathbb{E}^s_{\nu,t} \left [ \frac{1}{\tau_k} \int_t^{t+\tau_k} \partial_{\theta} \phi^{n}(r,\bar x_r) + \langle D \phi^{n}(r,\bar x_r), \Delta \bar x_r + b(\bar x_r,\bar u_r) \rangle_{H^1_0(\Lambda)\times H^{-1}(\Lambda)} \right.\\
			&\qquad\qquad\qquad\qquad\qquad\qquad + \left. \frac12 \text{tr}\left (\sigma(\bar x_r,\bar u_r)^{\ast} D^2 \phi^{n}(r,\bar x_r) \sigma(\bar x_r,\bar u_r) \right ) \mathrm{d}r \right ](\omega) \\
			&\to G_t(\omega) + \langle p_t(\omega), \Delta \bar x_t(\omega) + b( \bar x_t(\omega), \bar u_t(\omega)) \rangle\\
			&\qquad\qquad\qquad\qquad\qquad\qquad + \frac12 \text{tr}(\sigma ( \bar x_t(\omega), \bar u_t(\omega))^{\ast} P^{n}_t(\omega) \sigma( \bar x_t(\omega), \bar u_t(\omega))),
		\end{split}
	\end{equation}
	$\mathrm{d}t\otimes\mathbb{P}$--almost everywhere.
	
	For the last line in equation \eqref{viscosity}, we note that
	\begin{equation}
		\phi^{n}(\theta,x) - \phi(\theta,x) = \frac12 \langle x- \bar x_t, (P^{n}_t - P_t)(x-\bar x_t) \rangle_{L^2(\Lambda)}.
	\end{equation}
	Therefore, $\mathbb{E}^s_{\nu,t} [ \phi^{n}(t,\bar x_t) - \phi(t,\bar x_t) ]$ vanishes, and
	\begin{equation}\label{Pvanishes}
		\left | \mathbb{E}^s_{\nu,t} \left [ \phi^{n}(t+\tau_k,\bar x_{t+\tau_k}) - \phi(t+\tau_k,\bar x_{t+\tau_k}) \right ] \right | \leq \frac12 \| P^{n}_t - P_t \|_{L^2(\Lambda^2)}^2 \mathbb{E}^s_{\nu,t} \left [ \| \bar x_{t+\tau_k} - \bar x_t \|_{L^2(\Lambda)}^2 \right ].
	\end{equation}
	Dividing by $\tau_k$, taking the limit $k\to\infty$, and exploiting equation \eqref{timeregularityx}, we obtain
	\begin{equation}
		\lim_{k\to\infty} \frac{1}{\tau_k} \left | \mathbb{E}^s_{\nu,t} \left [ \phi^{n}(t+\tau_k,\bar x_{t+\tau_k}) - \phi(t+\tau_k,\bar x_{t+\tau_k}) \right ] \right | \leq C \| P^{n}_t - P_t \|_{L^2(\Lambda^2)}^2.
	\end{equation}
	Altogether, we derive from equation \eqref{viscosity} for almost every $t\in [s,T]$
	\begin{equation}
		\begin{split}
			0 \geq& \lim_{k\to\infty} \frac{1}{\tau_k} \mathbb{E}^s_{\nu,t} \left [ V(t+\tau_k,\bar x_{t+\tau_k}) - \phi(t+\tau_k,\bar x_{t+\tau_k}) - V(t,\bar x_t) + \phi(t,\bar x_t) \right ]\\
			\geq& \text{tr} \left ( \sigma(\bar x_t,\bar u_t)^{\ast} \left [ q_t - P^{n}_t \sigma(\bar x_t,\bar u_t) \right ] \right ) - C \| P^{n}_t - P_t \|_{L^2(\Lambda^2)}^2
		\end{split}
	\end{equation}
	$\mathbb{P}$--almost surely. Taking the limit $n \to \infty$ concludes the proof.
\end{proof}

\begin{lemma}\label{asconvergence}
	For almost every $t\in[s,T]$, it holds
	\begin{equation}\label{convergence}
		\begin{split}
			&\mathbb{E}^s_{\nu,t} \left [ \frac{1}{\tau} \int_t^{t+\tau} \partial_{\theta} \phi^{n}(r,\bar x_r) + \langle D \phi^{n}(r,\bar x_r), \Delta \bar x_r + b(\bar x_r,\bar u_r) \rangle_{H^1_0(\Lambda)\times H^{-1}(\Lambda)} \right.\\
			&\qquad\qquad\qquad\qquad\qquad\qquad + \left. \frac12 \text{tr}\left (\sigma(\bar x_r,\bar u_r)^{\ast} D^2 \phi^{n}(r,\bar x_r) \sigma(\bar x_r,\bar u_r) \right ) \mathrm{d}r \right ](\omega) \\
			&\to G_t(\omega) + \langle p_t(\omega), \Delta \bar x_t(\omega) + b( \bar x_t(\omega), \bar u_t(\omega)) \rangle\\
			&\qquad\qquad\qquad\qquad\qquad\qquad+ \frac12 \text{tr}(\sigma ( \bar x_t(\omega), \bar u_t(\omega))^{\ast} P^{n}_t(\omega) \sigma( \bar x_t(\omega), \bar u_t(\omega))),
		\end{split}
	\end{equation}
	as $\tau \to 0$, $\mathbb{P}$--almost surely. Here, $\phi^{n}$ is given by \eqref{phin}.
\end{lemma}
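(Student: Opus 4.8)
The plan is to expand the integrand in \eqref{convergence} using the explicit form of $\phi^n$ in \eqref{phin}, separating the contributions coming from the radial correction $F(a(\cdot,\cdot))$ from those coming from the quadratic polynomial $G_t(\theta-t) + \langle p_t, x-\bar x_t\rangle + \tfrac12\langle x - \bar x_t, P^n_t(x-\bar x_t)\rangle$. Writing $w := x - \bar x_t$ and $a = a(\theta,x)$, a direct computation gives
\begin{align*}
\partial_\theta\phi^n &= F'(a)\,\partial_\theta a + G_t,\\
D\phi^n &= F'(a)\,D_x a + p_t + P^n_t w,\\
D^2\phi^n &= F''(a)\,D_x a\otimes D_x a + F'(a)\,D^2_x a + P^n_t.
\end{align*}
Evaluating at $(r,\bar x_r)$ and inserting into the left-hand side of \eqref{convergence}, the polynomial part produces exactly the three terms $G_t$, $\langle p_t, \Delta\bar x_r + b(\bar x_r,\bar u_r)\rangle$ and $\tfrac12\mathrm{tr}(\sigma(\bar x_r,\bar u_r)^\ast P^n_t\sigma(\bar x_r,\bar u_r))$, together with a cross term $\langle P^n_t(\bar x_r - \bar x_t),\, \Delta\bar x_r + b(\bar x_r,\bar u_r)\rangle$. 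I would first dispose of the $F$-contributions and then treat the polynomial part by Lebesgue differentiation.

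For the correction term, the key input is the construction of $F$ in \eqref{aandf}. Using that $g$ is continuous, non-decreasing, and $g(0)=0$, the iterated integral yields, as $a\downarrow 0$,
\begin{equation*}
F'(a)\to 0,\qquad |F''(a)|\leq C\,\frac{g(4a)}{a}.
\end{equation*}
On the other hand, from $a^2 = (\theta-t)^2 + \|w\|_{L^2(\Lambda)}^4$ one reads off the geometric bounds $|\partial_\theta a|\leq 1$, $\|w\|_{L^2(\Lambda)}^2\leq a$, and $\|D_x a\|_{L^2(\Lambda)}^2\leq 4\|w\|_{L^2(\Lambda)}^2$; since $D_x a$ is a scalar multiple of $w\in H^1_0(\Lambda)$, one also gets $\|D_x a\|_{H^1_0(\Lambda)}\leq 2\|w\|_{H^1_0(\Lambda)}$. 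Contracting $D^2_x a$ and $D_x a\otimes D_x a$ against the Hilbert--Schmidt operator $\sigma = \sigma(\bar x_r,\bar u_r)$ and using $\|w\|^2_{L^2(\Lambda)}\leq a$, a short calculation gives
\begin{equation*}
|\mathrm{tr}(\sigma^\ast D^2_x a\,\sigma)|\leq C\|\sigma\|_{L_2(\Xi,L^2(\Lambda))}^2,\qquad |F''(a)|\,\mathrm{tr}(\sigma^\ast (D_x a\otimes D_x a)\sigma)\leq C\,g(4a)\,\|\sigma\|_{L_2(\Xi,L^2(\Lambda))}^2.
\end{equation*}
Since $\bar x\in C([s,T];L^2(\Lambda))$ $\mathbb{P}$-almost surely, we have $\sup_{r\in[t,t+\tau]}a(r,\bar x_r)\to 0$ as $\tau\downarrow 0$, hence $\sup_r|F'(a(r,\bar x_r))|\to0$ and $\sup_r g(4a(r,\bar x_r))\to0$. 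Combined with the a priori bounds that $\tfrac1\tau\int_t^{t+\tau}\|\bar x_r\|_{H^1_0(\Lambda)}^2\,\mathrm{d}r$ and $\tfrac1\tau\int_t^{t+\tau}\|\sigma(\bar x_r,\bar u_r)\|_{L_2(\Xi,L^2(\Lambda))}^2\,\mathrm{d}r$ stay bounded (from \Cref{aprioriy} and the growth of $\sigma$), every $F$-contribution vanishes in the limit.

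It then remains to pass to the limit in the polynomial part. After switching to the regular conditional probability so that $G_t$, $p_t$, $P^n_t$ are constants, I would invoke Lebesgue's differentiation theorem exactly as in \Cref{firsttimeincrement} and \Cref{mixedtimeincrement}: for almost every $t$ one has $\tfrac1\tau\int_t^{t+\tau}\langle p_t, \Delta\bar x_r + b(\bar x_r,\bar u_r)\rangle\,\mathrm{d}r\to\langle p_t,\Delta\bar x_t + b(\bar x_t,\bar u_t)\rangle$ and $\tfrac1{2\tau}\int_t^{t+\tau}\mathrm{tr}(\sigma_r^\ast P^n_t\sigma_r)\,\mathrm{d}r\to\tfrac12\mathrm{tr}(\sigma_t^\ast P^n_t\sigma_t)$, using $p_t\in H^1_0(\Lambda)$ to make sense of the duality pairing with $\Delta\bar x_r\in H^{-1}(\Lambda)$. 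The cross term is controlled via $\|P^n_t(\bar x_r-\bar x_t)\|_{H^1_0(\Lambda)}\leq C_n\|P_t\|\,\|\bar x_r-\bar x_t\|_{L^2(\Lambda)}$ (finite rank of $P^n_t$) and Cauchy--Schwarz, and vanishes because $\tfrac1\tau\int_t^{t+\tau}\|\bar x_r-\bar x_t\|_{L^2(\Lambda)}^2\,\mathrm{d}r\to0$ at Lebesgue points. Collecting the surviving terms gives the right-hand side of \eqref{convergence}.

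The main obstacle is the second-order trace term $\tfrac12 F''(a)\,\mathrm{tr}(\sigma^\ast(D_x a\otimes D_x a)\sigma)$: here $F''$ genuinely blows up like $g(4a)/a$ as $a\downarrow0$, and the rank-one tensor $D_x a\otimes D_x a$ must be paired against the trace-class operator $\sigma\sigma^\ast$ rather than merely estimated in operator norm. The estimate closes only because of the exact cancellation $\|D_x a\|_{L^2(\Lambda)}^2\leq 4\|w\|_{L^2(\Lambda)}^2\leq 4a$, which absorbs the $1/a$ singularity of $F''$ and leaves the residual factor $g(4a)\to0$. This balance is precisely the reason for the iterated-integral mollification of $g$ in the definition of $F$ in \eqref{aandf}.
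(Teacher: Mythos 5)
Your proposal is correct and follows essentially the same route as the paper's proof: expand the derivatives of $\phi^n$ from \eqref{phin} into the quadratic polynomial part plus the $F(a(\cdot,\cdot))$ correction, kill every $F$-contribution using the bounds on $F'$, $F''$, $Da$, $D^2a$, control the cross term $\langle P^n_t(\bar x_r-\bar x_t), \Delta\bar x_r + b(\bar x_r,\bar u_r)\rangle_{H^1_0(\Lambda)\times H^{-1}(\Lambda)}$ through the finite-rank ($H^1_0$-valued) structure of $P^n_t$, and pass to the limit in the surviving polynomial terms by Lebesgue's differentiation theorem after switching to the regular conditional probability. The only difference is one of emphasis: where the paper invokes the bounds \eqref{fprime}--\eqref{fprimeprime} together with continuity of $F''(a)\,Da\otimes Da$, you make the cancellation $|F''(a)|\,\|Da\otimes Da\|_{L(L^2(\Lambda))}\leq C\,g(4a)\to 0$ explicit, which is a cleaner justification of the same step.
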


\begin{proof}
	We fix $t\in [s,T]$ such that \eqref{4.84} holds and the convergence in all the applications of Lebesgue's differentiation theorem below holds for $t$. Note that the set of such $t$ is a set of full measure. Now, let us first discuss some properties of $\phi^{n}$ that can be proven similarly as in the finite dimensional case, see \cite[Chapter V, Lemma 4.1]{fleming2006}. Recall the definition of the approximated test function
	\begin{multline}
		\phi^{n}(\theta,x) := F(a(\theta,x)) + V(t,\bar x_t) + G_t(\theta-t)\\
		+ \langle p_t ,x-\bar x_t \rangle_{L^2(\Lambda)} + \frac12 \langle x-\bar x_t, P^{n}_t(x-\bar x_t) \rangle_{L^2(\Lambda)}.
	\end{multline}
	where $F$ and $a$ are given by \eqref{aandf}. The derivatives of $F$ and $a$ are given by
	\begin{equation}
		\begin{cases}
			F^{\prime}(a) = \frac{4}{3a} \int_{2a}^{4a} g(\xi) \mathrm{d}\xi - \frac{2}{3a} \int_a^{2a} g(\xi)\mathrm{d}\xi - \frac{1}{a} F(a),\quad F^{\prime}(0)=0\\
			F^{\prime\prime}(a) = \frac{2}{3a}(8g(4a) - 6g(2a) + g(a)) - \frac{2}{a} F^{\prime}(a),\quad F^{\prime\prime}(0)=0
		\end{cases}
	\end{equation}
	and
	\begin{equation}
		\begin{cases}
			\partial_{\theta} a(\theta,x) = \frac{\theta - t}{a(\theta,x)}\\
			Da(\theta,x) = \frac{2 \|x - \bar x_t\|_{L^2(\Lambda)}^2}{a(\theta,x)} (x - \bar x_t)\\
			D^2a(\theta,x) = \left ( \frac{4}{a(\theta,x)} - \frac{4\| x- \bar x_t \|_{L^2(\Lambda)}^4}{a(\theta,x)^3} \right ) (x-\bar x_t)\otimes (x-\bar x_t) + \frac{2 \| x-\bar x_t \|_{L^2(\Lambda)}^2}{a(\theta,x)} \langle \cdot,\cdot \rangle_{L^2(\Lambda)}.
		\end{cases}	
	\end{equation}
	The first and second derivative of $\phi^{n}$ are given by
	\begin{equation}
		\begin{cases}
			\partial_{\theta} \phi^{n}(\theta,x) = \frac{\theta-t}{a(\theta,x)} F^{\prime}(a(\theta,x)) + G_t\\
			D \phi^{n}(\theta,x) = F^{\prime}(a(\theta,x)) Da(\theta,x) + p_t + P^{n}_t(x - \bar x_t)\\
			D^2\phi^{n}(\theta,x) = F^{\prime\prime}(a(\theta,x)) Da(\theta,x) \otimes Da(\theta,x) + F^{\prime}(a(\theta,x)) D^2a(\theta,x) + P^{n}_t
		\end{cases}
	\end{equation}
	Thus, $\phi^{n} \in C^{1,2}((s,T)\times L^2(\Lambda))$, and
	\begin{equation}\label{phi}
		(\phi^{n}(t,\bar x_t), \partial_{\theta}\phi^{n}(t,\bar x_t), D\phi^{n} (t,\bar x_t), D^2 \phi^{n}(t,\bar x_t)) = (V(t,\bar x_t),G_t,p_t,P^{n}_t).
	\end{equation}
	Furthermore, $F(a(\theta,x)) \in C^{1,2}([s,T]\times L^2(\Lambda))$ with vanishing time derivative and first and second order space derivative at $(\theta,x) = (t,\bar x_t)$. Furthermore, note that
	\begin{equation}\label{fprime}
		|F^{\prime}(a(\theta,x))| \leq C\left (1+ \| x \|_{L^2(\Lambda)}^2\right )
	\end{equation}
	and
	\begin{equation}\label{fprimeprime}
		| F^{\prime\prime}(a(\theta,x)) | \leq C\left (1+\|x\|_{L^2(\Lambda)}^2\right ).
	\end{equation}
	
	Now let us start with the proof of \eqref{convergence}. Let $(\tau_k)_{k\in\mathbb{N}}$ be an arbitrary sequence converging to zero. We fix $\omega\in\Omega$, and, as described in the discussion before \eqref{itosformula}, we switch to the probability space $(\Omega,\mathcal{F}^{\nu}, \mathbb{P}(\,\cdot\,|\mathcal{F}^s_{\nu,t})(\omega))$. First, we consider the term involving the time derivative of $\phi^{n}$. We have
	\begin{equation}
		\partial_{\theta} \phi^{n}(r,\bar x_r) = \frac{r-t}{a(r,\bar x_r)} F^{\prime}(a(r,\bar x_r)) + G_t.
	\end{equation}
	By the almost sure continuity of
	\begin{equation}
		r \mapsto F^{\prime}(a(r,\bar x_r)) \frac{r-t}{a(r,\bar x_r)},
	\end{equation}
	and the dominated convergence theorem, we have
	\begin{equation}\label{timephi}
		\lim_{k\to\infty} \mathbb{E}^s_{\nu,t} \left [ \left | \frac{1}{\tau_k} \int_t^{t+\tau_k} F^{\prime}(a(r,\bar x_r)) \frac{r-t}{a(r,\bar x_r)} \mathrm{d}r \right | \right ](\omega) =0.
	\end{equation}
	
	Now, let us consider the first space derivative of $\phi^{n}$. We have
	\begin{equation}
		D \phi^{n}(r,\bar x_r) = F^{\prime}(a(r,\bar x_r)) Da(r,\bar x_r) + p_t + P^{n}_t(\bar x_r - \bar x_t),
	\end{equation}
	thus we have to show
	\begin{equation}\label{spacederivativefirst}
		\lim_{k\to\infty} \mathbb{E}^s_{\nu,t} \left [ \left | \frac{1}{\tau_k} \int_t^{t+\tau_k} \langle F^{\prime}(a(r,\bar x_r)) Da(r,\bar x_r), \Delta \bar x_r + b(\bar x_r,\bar u_r) \rangle_{H^1_0(\Lambda)\times H^{-1}(\Lambda)} \mathrm{d}r \right | \right ](\omega) =0,
	\end{equation}
	and
	\begin{equation}\label{spacederivativesecond}
		\lim_{k\to\infty} \mathbb{E}^s_{\nu,t} \left [ \left | \frac{1}{\tau_k} \int_t^{t+\tau_k} \langle p_t, \Delta (\bar x_r - \bar x_t) + b(\bar x_r,\bar u_r) - b(\bar x_t,\bar u_t) \rangle_{H^1_0(\Lambda)\times H^{-1}(\Lambda)} \mathrm{d}r \right | \right ](\omega) =0,
	\end{equation}
	and
	\begin{equation}\label{spacederivativethird}
		\lim_{k\to\infty} \mathbb{E}^s_{\nu,t} \left [ \left | \frac{1}{\tau_k} \int_t^{t+\tau_k} \langle P^{n}_t(\bar x_r - \bar x_t), \Delta \bar x_r + b(\bar x_r,\bar u_r) \rangle_{H^1_0(\Lambda)\times H^{-1}(\Lambda)} \mathrm{d}r \right | \right ](\omega) =0.
	\end{equation}
	We only consider the terms involving $\Delta \bar x_r$; the terms involving $b(\bar x_r,\bar u_r)$ can be handled similarly.
	
	Let us start with \eqref{spacederivativefirst}. Using \eqref{fprime} and the bound $\| Da(r,\bar x_r) \|_{H^1_0(\Lambda)} \leq 2 \|\bar x_r-\bar x_t\|_{H^1_0(\Lambda)}$, we obtain
	\begin{equation}
		\begin{split}
			&\mathbb{E}^s_{\nu,t} \left [ \left | \frac{1}{\tau_k} \int_t^{t+\tau_k} \langle F^{\prime}(a(r,\bar x_r)) Da(r,\bar x_r), \Delta \bar x_r \rangle_{H^1_0(\Lambda)\times H^{-1}(\Lambda)} \mathrm{d}r \right | \right ](\omega)\\
			&\leq \mathbb{E}^s_{\nu,t} \left [ \frac{1}{\tau_k} \int_t^{t+\tau_k} C(1+ \| \bar x_r \|_{L^2(\Lambda)}^2) \| \bar x_r-\bar x_t \|_{H^1_0(\Lambda)} \| \bar x_r \|_{H^1_0(\Lambda)} \mathrm{d}r \right ](\omega).
		\end{split}
	\end{equation}
	Using H\"older's inequality, we obtain
	\begin{equation}\label{hoelder}
		\begin{split}
			& \mathbb{E}^s_{\nu,t} \left [ \frac{1}{\tau_k} \int_t^{t+\tau_k} \| \bar x_r \|_{L^2(\Lambda)}^2 \| \bar x_r-\bar x_t \|_{H^1_0(\Lambda)} \| \bar x_r \|_{H^1_0(\Lambda)} \mathrm{d}r \right ](\omega)\\
			&\leq \left ( \frac{1}{\tau_k} \int_t^{t+\tau_k} \mathbb{E}^s_{\nu,t} \left [ \| \bar x_r-\bar x_t \|_{H^1_0(\Lambda)}^2 \right ](\omega) \mathrm{d}r \right )^{\frac12} \left ( \frac{1}{\tau_k} \int_t^{t+\tau_k} \mathbb{E}^s_{\nu,t} \left [ \| \bar x_r \|_{L^2(\Lambda)}^4 \| \bar x_r \|_{H^1_0(\Lambda)}^2 \right ](\omega) \mathrm{d}r \right )^{\frac12}.
		\end{split}
	\end{equation}
	Since $\bar x \in L^2([s,T]\times\Omega;H^1_0(\Lambda))$,
	\begin{equation}
		\lim_{k\to\infty} \frac{1}{\tau_k} \int_t^{t+\tau_k} \mathbb{E} \left [ \| \bar x_r-\bar x_t \|_{H^1_0(\Lambda)}^2 \right ] \mathrm{d}r = 0.
	\end{equation}
	Thus, the first factor of \eqref{hoelder} converges to zero $\mathbb{P}$-almost surely along some subsequence. Since
	\begin{equation}
		\| \bar x \|_{L^2(\Lambda)}^{4} \| \bar x \|_{H^1_0(\Lambda)}^2 \in L^1([s,T]\times\Omega),
	\end{equation}
	(see \cite[Lemma 5.1.5]{liu2015}), the second factor is finite in the limit $k\to\infty$ along some subsequence.
	
	Now, let us consider \eqref{spacederivativesecond}. We have
	\begin{equation}
		\begin{split}
			&\mathbb{E}^s_{\nu,t} \left [ \left | \frac{1}{\tau_k} \int_t^{t+\tau_k} \langle p_t, \Delta (\bar x_r - \bar x_t) \rangle_{H^1_0(\Lambda)\times H^{-1}(\Lambda)} \mathrm{d}r \right | \right ](\omega)\\
			&\leq \| p_t \|_{H^1_0(\Lambda)} \left ( \frac{1}{\tau_k} \int_t^{t+\tau_k} \mathbb{E}^s_{\nu,t} \left [ \| \bar x_r - \bar x_t \|_{H^1_0(\Lambda)}^2 \right ](\omega) \mathrm{d}r \right )^{\frac12}.
		\end{split}
	\end{equation}
	The first factor is finite for almost every $t$, and the second factor again converges to zero along some subsequence.
	
	Finally, for \eqref{spacederivativethird}, we obtain using H\"older's inequality
	\begin{equation}\label{spacederivativethird2}
		\begin{split}
			&\mathbb{E}^s_{\nu,t} \left [ \left | \frac{1}{\tau_k} \int_t^{t+\tau_k} \langle P^{n}_t(\bar x_r - \bar x_t), \Delta \bar x_r \rangle_{H^1_0(\Lambda)\times H^{-1}(\Lambda)} \mathrm{d}r \right | \right ](\omega)\\
			&\leq \| P^{n}_t \|_{H^1_0(\Lambda^2)} \left ( \frac{1}{\tau_k} \int_t^{t+\tau_k} \mathbb{E}^s_{\nu,t} \left [ \| \bar x_r - \bar x_t \|_{L^2(\Lambda)}^2 \right ](\omega) \right )^{\frac12}\left ( \frac{1}{\tau_k} \int_t^{t+\tau_k} \mathbb{E}^s_{\nu,t} \left [ \| \bar x_r \|_{H^1_0(\Lambda)}^2 \right ](\omega) \mathrm{d}r \right )^{\frac12}
		\end{split}
	\end{equation}
	The second factor converges to zero and the third factor is finite along some subsequence, which shows that the right-hand side of equation \eqref{spacederivativethird2} converges to zero.
	
	Now, let us consider the second space derivative of $\phi^{n}$. We have
	\begin{equation}
		D^2 \phi^{n}(r,\bar x_r) = F^{\prime\prime}(a(r,\bar x_r)) Da(r,\bar x_r) \otimes Da(r,\bar x_r) + F^{\prime}(a(r,\bar x_r)) D^2a(r,\bar x_r) + P^{n}_t,
	\end{equation}
	thus we have to show
	\begin{equation}\label{secondspacederivativefirst}
		\lim_{k\to\infty} \mathbb{E}^s_{\nu,t}\! \left [ \left | \frac{1}{\tau_k}\! \int_t^{t+\tau_k}\!\!\! \text{tr}\left (\sigma(\bar x_r,\bar u_r)^{\ast} F^{\prime\prime}(a(r,\bar x_r)) Da(r,\bar x_r) \otimes Da(r,\bar x_r) \sigma(\bar x_r,\bar u_r) \right ) \mathrm{d}r \right | \right ]\!(\omega) =0.
	\end{equation}
	and
	\begin{equation}\label{secondspacederivativesecond}
		\lim_{k\to\infty} \mathbb{E}^s_{\nu,t} \left [ \left | \frac{1}{\tau_k} \int_t^{t+\tau_k} \text{tr}\left (\sigma(\bar x_r,\bar u_r)^{\ast} F^{\prime}(a(r,\bar x_r)) D^2a(r,\bar x_r) \sigma(\bar x_r,\bar u_r) \right ) \mathrm{d}r \right | \right ](\omega) =0.
	\end{equation}
	and
	\begin{equation}\label{secondspacederivativethird}
		\lim_{k\to\infty} \mathbb{E}^s_{\nu,t} \left [ \left | \frac{1}{\tau_k} \int_t^{t+\tau_k} \text{tr}\left ( (\sigma(\bar x_r,\bar u_r) \sigma(\bar x_r,\bar u_r)^{\ast} - \sigma(\bar x_t,\bar u_t) \sigma(\bar x_t,\bar u_t)^{\ast} ) P^{n}_t \right ) \mathrm{d}r \right | \right ](\omega) =0.
	\end{equation}
	For the first term, we have
	\begin{equation}
		\begin{split}
			&\mathbb{E}^s_{\nu,t} \left [ \left | \frac{1}{\tau_k} \int_t^{t+\tau_k} \text{tr}\left (\sigma(\bar x_r,\bar u_r)^{\ast} F^{\prime\prime}(a(r,\bar x_r)) Da(r,\bar x_r) \otimes Da(r,\bar x_r) \sigma(\bar x_r,\bar u_r) \right ) \mathrm{d}r \right | \right ](\omega)\\
			&\leq \left ( \frac{1}{\tau_k} \int_t^{t+\tau_k} \mathbb{E}^s_{\nu,t} \left [ \| \sigma(\bar x_r,\bar u_r) \|^2_{L_2(\Xi,L^2(\Lambda))} \right ](\omega) \mathrm{d}r \right )^{\frac12}\\
			&\qquad \left ( \frac{1}{\tau_k} \int_t^{t+\tau_k} \mathbb{E}^s_{\nu,t} \left [ \| F^{\prime\prime}(a(r,\bar x_r)) Da(r,\bar x_r) \otimes Da(r,\bar x_r)  \|^2_{L(L^2(\Lambda))} \right ](\omega) \mathrm{d}r \right )^{\frac12}.
		\end{split}
	\end{equation}
	Since $\sigma(\bar x,\bar u)\in L^2([s,t]\times\Omega;L_2(\Xi,L^2(\Lambda)))$, the first factor is finite along some subsequence, and using continuity of the second derivative of $F$ and the first derivative of $a$, we obtain
	\begin{equation}
		\lim_{k\to\infty} \frac{1}{\tau_k} \int_t^{t+\tau_k} \mathbb{E}^s_{\nu,t} \left [ \| F^{\prime\prime}(a(r,\bar x_r)) Da(r,\bar x_r) \otimes Da(r,\bar x_r)  \|^2_{L(L^2(\Lambda))} \right ](\omega) \mathrm{d}r = 0.
	\end{equation}
	Similar arguments can be employed to prove \eqref{secondspacederivativesecond}. Finally, for \eqref{secondspacederivativethird}, we have
	\begin{equation}
		\begin{split}
			&\mathbb{E}^s_{\nu,t} \left [ \left | \frac{1}{\tau_k} \int_t^{t+\tau_k} \text{tr}\left ( (\sigma(\bar x_r,\bar u_r) \sigma(\bar x_r,\bar u_r)^{\ast} - \sigma(\bar x_t,\bar u_t) \sigma(\bar x_t,\bar u_t)^{\ast} ) P^{n}_t \right ) \mathrm{d}r \right | \right ](\omega)\\
			&\leq \| P^{n}_t \|_{L(L^2(\Lambda))} \frac{1}{\tau_k} \int_t^{t+\tau_k} \mathbb{E}^s_{\nu,t} \left [ \| \sigma(\bar x_r,\bar u_r) \sigma(\bar x_r,\bar u_r)^{\ast} - \sigma(\bar x_t,\bar u_t) \sigma(\bar x_t,\bar u_t)^{\ast} \|_{L_1(L^2(\Lambda))} \right ](\omega) \mathrm{d}r.
		\end{split}
	\end{equation}
	The first factor is finite for almost every $t$, and the second factor converges to zero along some subsequence since $\sigma(x,u)\in L^2([s,T]\times\Omega;L_2(\Xi,L^2(\Lambda)))$.
\end{proof}

\section{Verification Theorem}\label{verificationtheorem}

It turns out that the necessary optimality conditions from Theorem \ref{spacetime} and Corollary \ref{gleqh} are closely related to a non-smooth version of the classical verification theorem, which is a sufficient optimality condition. This relationship was first observed in the finite dimensional case in \cite{zhou1997}, and proven in \cite{gozzi2005,gozzi2010}. We are going to show how to extend this result to the case of controlled SPDEs. The results we are going to prove are mathematically independent of Section \ref{necessaryoptimalityconditions} and in fact hold in a more general setting. In particular, we do not use the adjoint states and therefore we drop the assumption that the coefficients of the control problem are Nemytskii operators and we do not restrict to one-dimensional space domains. Let us therefore first formulate the more general setting.  

Let $\Lambda\subset \mathbb{R}^d$, $d\in\mathbb{N}$, be a bounded domain and fix $T>0$. Consider the following control problem: Minimize
\begin{equation}\label{ch4:costfunctional}
	J(s,x;u) := \mathbb{E} \left [ \int_s^T L(x^u_t,u_t) \mathrm{d}t + H(x^u_T) \right ]
\end{equation}
over $u\in \mathcal{U}_s$ subject to
\begin{equation}\label{ch4:stateequation}
	\begin{cases}
		\mathrm{d}x^u_t = [ \Delta x^u_t + B(x^u_t, u_t) ] \mathrm{d}t + \Sigma(x^u_t,u_t) \mathrm{d}W_t,\quad t\in[s,T]\\
		x^u_s=x\in L^2(\Lambda),
	\end{cases}
\end{equation}
where $L: L^2(\Lambda)\times U \to \mathbb{R}$, $H:L^2(\Lambda)\to\mathbb{R}$ are continuous and satisfy
\begin{equation}\label{growthlh}
	|L(x,u)|, |H(x)| \leq C\left (1+ \| x\|_{L^2(\Lambda)}^2 \right )
\end{equation}
for all $x\in L^2(\Lambda)$, and $B:L^2(\Lambda)\times U \to L^2(\Lambda)$ and $\Sigma : L^2(\Lambda)\times U \to L_2(\Xi,L^2(\Lambda))$ satisfy
\begin{equation}\label{assumptionsbsigma}
	\begin{split}
		\| B(x,u) - B(\tilde x,u) \|_{L^2(\Lambda)} &\leq C \| x-\tilde x\|_{L^2(\Lambda)}\\
		\| B(x,u) \|_{L^2(\Lambda)} &\leq C\left (1+ \| x \|_{L^2(\Lambda)} \right )\\
		\| \Sigma(x,u) - \Sigma(\tilde x,u) \|_{L_2(\Xi,L^2(\Lambda))} &\leq C \| x-\tilde x\|_{L^2(\Lambda)}\\
		\| \Sigma(x,u) \|_{L_2(\Xi,L^2(\Lambda))} &\leq C \left (1+ \| x \|_{L^2(\Lambda)} \right ),
	\end{split}
\end{equation}
for all $x,\tilde x\in L^2(\Lambda)$ and all $u\in U$, and
\begin{equation}
	\| \Sigma(x,u) \|_{L_2(\Xi,H^1_0(\Lambda))} \leq C \left (1+ \| x \|_{H^1_0(\Lambda)} \right )
\end{equation}
for all $x \in H^1_0(\Lambda)$ and all $u\in U$. In this setting, the Hamiltonian $\mathcal{H}: L^2(\Lambda) \times U \times L^2(\Lambda) \times L(L^2(\Lambda)) \to \mathbb{R}$ is given by
\begin{equation}
	\mathcal{H}(x,u,p,P) := L(x,u) + \langle p, B(x,u) \rangle_{L^2(\Lambda)} + \frac12 \text{tr} \left ( \Sigma(x,u)^{\ast} P \Sigma(x,u) \right ).
\end{equation}

\begin{remark}
	In this section, $u^{\ast}$ denotes an arbitrary admissible control and $x^{\ast}$ denotes the corresponding controlled state.
\end{remark}

\begin{theorem}\label{verification}
	Assume there exists a constant $C$ such that for every $t\in [s,T)$ and $x\in H^1_0(\Lambda)$, 
	\begin{equation}\label{growthcondition}
		V(t+\tau, x) - V(t,x) \leq C \left ( 1 + \|x\|_{H^1_0(\Lambda)}^2 \right ) \tau,
	\end{equation}
	and let $V$ be semiconcave uniformly in $t$, i.e. there exists a constant $C\geq 0$ such that for every $t\in (s,T]$ it holds
	\begin{equation}\label{semiconcavity}
		V(t,\cdot) - C\| \cdot \|_{L^2(\Lambda)}^2
	\end{equation}
	is concave on $L^2(\Lambda)$. Suppose further that there are adapted processes
	\begin{equation}
		\begin{cases}
			G\in L^2([s,T]\times\Omega;\mathbb{R})\\
			p\in L^2([s,T]\times\Omega;H^1_0(\Lambda))\\
			P\in L^2([s,T]\times\Omega;L_2(L^2(\Lambda)))
		\end{cases}
	\end{equation}
	such that for almost every $t\in [s,T]$
	\begin{equation}\label{differentialinclusion}
		(G_t,p_t,P_t) \in D^{1,2,+}_{t+,x} V(t, x^{\ast}_t)
	\end{equation}
	$\mathbb{P}$-almost surely, and
	\begin{equation}\label{sufficientassumption}
		\mathbb{E} \left [ \int_s^T G_t + \langle \Delta x^{\ast}_t, p_t\rangle_{H^{-1}(\Lambda)\times H^1_0(\Lambda)} + \mathcal{H}( x^{\ast}_t, u^{\ast}_t,p_t,P_t) \mathrm{d}t \right ] \leq 0.
	\end{equation}
	Then $( x^{\ast}, u^{\ast})$ is an optimal pair.
\end{theorem}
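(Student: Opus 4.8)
The plan is to prove the single nontrivial inequality $J(s,x;u^{\ast}) \le V(s,x)$; since $V(s,x) \le J(s,x;u^{\ast})$ holds automatically from the definition of the value function, this forces equality and hence optimality of $(x^{\ast},u^{\ast})$. Introduce $\psi(t) := \mathbb{E}[V(t,x^{\ast}_t)]$, so that $\psi(s) = V(s,x)$ and, by the terminal condition $V(T,\cdot)=H(\cdot)$, $\psi(T) = \mathbb{E}[H(x^{\ast}_T)]$. The target is then equivalent to the differential statement
\begin{equation*}
\psi(T) - \psi(s) \le -\,\mathbb{E}\int_s^T L(x^{\ast}_t,u^{\ast}_t)\,\mathrm{d}t ,
\end{equation*}
which I will obtain by integrating an infinitesimal one-sided bound on the increments of $\psi$.

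The core step is that bound. Fix $t$ with $(G_t,p_t,P_t)\in D^{1,2,+}_{t+,x}V(t,x^{\ast}_t)$ as in \eqref{differentialinclusion}. Reusing the explicit construction from the proof of \Cref{gleqh}, I would build for this $t$ a function $\phi\in C^{1,2}$, together with its truncation $\phi^n$ (obtained by replacing $P_t$ with the finite-rank $P^n_t$), such that $V-\phi$ attains a global maximum over $[t,T)\times L^2(\Lambda)$ at $(t,x^{\ast}_t)$ and $(\phi,\partial_\theta\phi,D\phi,D^2\phi)(t,x^{\ast}_t)=(V(t,x^{\ast}_t),G_t,p_t,P_t)$, in the spirit of \Cref{testfunction}. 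The maximum property gives $V(t+\tau,x^{\ast}_{t+\tau})-V(t,x^{\ast}_t)\le \phi(t+\tau,x^{\ast}_{t+\tau})-\phi(t,x^{\ast}_t)$. Applying It\^o's formula for variational solutions to $\phi^n(r,x^{\ast}_r)$ on $[t,t+\tau]$ (switching to the regular conditional probability given $\mathcal{F}^s_{\nu,t}$ to freeze the $\omega$-dependence of $G_t,p_t,P_t,x^{\ast}_t$, exactly as in \Cref{gleqh}), and invoking the \Cref{asconvergence}-type regularity analysis now along $x^{\ast}$ — whose $L^2([s,T]\times\Omega;H^1_0(\Lambda))$-regularity follows from \eqref{assumptionsbsigma} and the $H^1_0$-bound on $\Sigma$ — I expect, after $\tau\downarrow 0$ and then $n\to\infty$, that for almost every $t$
\begin{equation*}
\limsup_{\tau\downarrow 0}\frac1\tau\Big(\mathbb{E}_t[V(t+\tau,x^{\ast}_{t+\tau})]-V(t,x^{\ast}_t)\Big)\le \Theta_t,
\end{equation*}
where $\Theta_t := G_t + \langle \Delta x^{\ast}_t,p_t\rangle_{H^{-1}(\Lambda)\times H^1_0(\Lambda)} + \langle p_t, B(x^{\ast}_t,u^{\ast}_t)\rangle_{L^2(\Lambda)} + \tfrac12\,\text{tr}(\Sigma(x^{\ast}_t,u^{\ast}_t)^{\ast}P_t\Sigma(x^{\ast}_t,u^{\ast}_t))$. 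Here the drift of $x^{\ast}$ produces the $\langle p_t,\Delta x^{\ast}_t + B\rangle$ contribution, the unbounded $\Delta$ being controlled by $p_t\in H^1_0(\Lambda)$, and the quadratic variation produces the trace term.

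To pass to $\psi$ and integrate, I take full expectations. The growth hypothesis \eqref{growthcondition} supplies a $\mathrm{d}t\otimes\mathbb{P}$-integrable majorant for the time-direction difference quotient, while the uniform semiconcavity \eqref{semiconcavity} upgrades the merely infinitesimal superdifferential estimate to a \emph{global} quadratic upper bound on the spatial increment; together these justify a reverse Fatou passage yielding $\limsup_{\tau\downarrow 0}\tau^{-1}(\psi(t+\tau)-\psi(t))\le \theta(t)$ for a.e.\ $t$, with $\theta(t):=\mathbb{E}[\Theta_t]\in L^1([s,T])$. The dynamic programming principle applied to the \emph{suboptimal} control $u^{\ast}$ gives the matching lower bound $\psi(t+\tau)-\psi(t)\ge -\mathbb{E}\int_t^{t+\tau}L\,\mathrm{d}r\ge -C\tau$, so $\psi$ is Lipschitz, hence absolutely continuous, and a standard real-analysis lemma converts the a.e.\ Dini bound into $\psi(T)-\psi(s)\le \int_s^T\theta(t)\,\mathrm{d}t = \mathbb{E}\int_s^T\Theta_t\,\mathrm{d}t$. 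Finally, the Hamiltonian identity $\langle p_t,B\rangle + \tfrac12\text{tr}(\Sigma^{\ast}P_t\Sigma) = \mathcal{H}(x^{\ast}_t,u^{\ast}_t,p_t,P_t)-L(x^{\ast}_t,u^{\ast}_t)$ rewrites the right-hand side as $\mathbb{E}\int_s^T[G_t+\langle\Delta x^{\ast}_t,p_t\rangle+\mathcal{H}(x^{\ast}_t,u^{\ast}_t,p_t,P_t)]\,\mathrm{d}t - \mathbb{E}\int_s^T L\,\mathrm{d}t$, which is $\le -\mathbb{E}\int_s^T L\,\mathrm{d}t$ by assumption \eqref{sufficientassumption}. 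This is precisely the target, so $J(s,x;u^{\ast})\le V(s,x)$ and $(x^{\ast},u^{\ast})$ is optimal.

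The main obstacle I anticipate is exactly the core step: converting the pointwise superdifferential, which is a joint $\limsup$ as $(\tau,z)\to 0$, into a conditional-expectation increment bound in which $z=x^{\ast}_{t+\tau}-x^{\ast}_t$ is genuinely random and correlated with $\tau$, while simultaneously taming the unbounded operator $\Delta$. The explicit $C^{1,2}$ test-function construction plus \Cref{asconvergence} circumvents the randomness issue, but re-establishing those delicate estimates for a \emph{general} admissible trajectory $x^{\ast}$ (rather than the more regular optimal $\bar x$ of Section~\ref{necessaryoptimalityconditions}), and securing the reverse-Fatou domination through \eqref{growthcondition} and \eqref{semiconcavity}, is where the real work lies.
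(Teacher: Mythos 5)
Your core step is exactly the paper's: the explicit construction of $\phi$ and its finite-rank approximation $\phi^n$ from the proof of \Cref{gleqh} (in the spirit of \Cref{testfunction}), the switch to the regular conditional probability to freeze $(G_t,p_t,P_t,x^{\ast}_t)$, It\^o's formula for $\phi^n$ along $x^{\ast}$, and \Cref{asconvergence}; your identification of the roles of \eqref{growthcondition} and \eqref{semiconcavity} (they are what the paper packages as \Cref{fatou}) is also correct, as is the DPP lower bound and the final Hamiltonian bookkeeping via \eqref{sufficientassumption}. The genuine gap is in your last step. You assert that the one-sided DPP bound $\psi(t+\tau)-\psi(t)\ge -C\tau$ makes $\psi(t):=\mathbb{E}[V(t,x^{\ast}_t)]$ ``Lipschitz, hence absolutely continuous.'' It does not: it only says that $\psi(t)+Ct$ is non-decreasing. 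The only upper bound your hypotheses provide --- the one coming from \eqref{growthcondition} and \eqref{semiconcavity}, i.e.\ \Cref{fatou} --- has the form $\psi(t+\tau)-\psi(t)\le \rho_2(t)\,\tau$ with $\rho_2$ merely in $L^1(s,T)$, not bounded, so no two-sided Lipschitz estimate follows. And without absolute continuity, the ``standard real-analysis lemma'' you invoke is simply false: a continuous function whose upper right Dini derivative is bounded a.e.\ by an integrable function need not satisfy the corresponding integral inequality (the Cantor function has upper right derivative $0$ off a Lebesgue-null set yet increases by $1$); a singular part can hide inside the exceptional null set, exactly where your estimates give no control.

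The paper closes this hole by reversing the order of limit and integration instead of arguing pointwise in $t$: using only continuity of $\psi$ (after extending $V(t,x^{\ast}_t):=V(T,x^{\ast}_T)$ for $t>T$) it writes
\begin{equation*}
\mathbb{E}\left[V(T,x^{\ast}_T)\right]-\mathbb{E}\left[V(s,x^{\ast}_s)\right]
=\lim_{\tau\downarrow 0}\int_s^T \frac{1}{\tau}\,\mathbb{E}\left[V(t+\tau,x^{\ast}_{t+\tau})-V(t,x^{\ast}_t)\right]\mathrm{d}t,
\end{equation*}
then applies Fatou's lemma in $t$ (with the integrable majorant $\rho_2$ from \Cref{fatou}) to pull the $\limsup$ inside the time integral, and only afterwards evaluates the pointwise $\limsup$ by the core step; no Dini-derivative lemma and no absolute continuity of $\psi$ are ever needed. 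Your route could in principle be repaired, but not by a standard lemma: from the bound $\psi(t+\tau)-\psi(t)\le\rho_2(t)\tau$ valid for a.e.\ $t$ and \emph{all} $\tau$ one can in fact rule out a singular part of $\mathrm{d}(\psi(t)+Ct)$ (for a non-zero, non-atomic, singular measure $\nu$ one has $\frac{1}{u}\nu((t,t+u])\to 0$ for a.e.\ $t$ while $\int_s^T\frac{1}{u}\nu((t,t+u])\,\mathrm{d}t\to\nu((s,T])>0$, contradicting dominated convergence under an $L^1$ majorant), whence $\psi$ is absolutely continuous and your argument goes through --- but nothing of this sort appears in the proposal. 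This is not a pedantic point: this precise limiting passage is what generated the correction history of the finite-dimensional verification theorem \cite{zhou1997,gozzi2005,gozzi2010}.
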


\begin{remark}
	\begin{enumerate}
		\item Together with Corollary \ref{gleqh}, this result implies
		\begin{equation}
			G_t + \langle \Delta x^{\ast}_t, p_t\rangle_{H^{-1}(\Lambda)\times H^1_0(\Lambda)} + \mathcal{H}( x^{\ast}_t, u^{\ast}_t,p_t,P_t) = 0
		\end{equation}
		$\mathrm{d}t\otimes \mathbb{P}$-almost everywhere.
		\item Conditions under which the growth condition \eqref{growthcondition} and the semiconcavity \eqref{semiconcavity} hold are given in the subsequent Proposition \ref{propertiesofvi} and Proposition \ref{propertiesofv}.
	\end{enumerate}
\end{remark}

In the proof of Theorem \ref{verification}, we need the following lemma.

\begin{lemma}\label{fatou}
	Let $V$ satisfy \eqref{growthcondition} and \eqref{semiconcavity}. Then, for almost every $t\in[s,T)$, there is a function $\rho_1 \in L^1(\Omega)$ such that for every $0<\tau\leq T-t$
	\begin{equation}\label{vlipschitzomega}
		\frac{1}{\tau} \mathbb{E}^s_{\nu,t} \left [ V(t+\tau,x^{\ast}_{t+\tau}) - V(t,x^{\ast}_t) \right ] \leq \rho_1(\omega)
	\end{equation}
	$\mathbb{P}$-almost surely. Furthermore, there is a function $\rho_2\in L^1(s,T)$ such that for almost every $t\in [s,T]$ and for every $0<\tau \leq T-t$
	\begin{equation}\label{vlipschitzt}
		\frac{1}{\tau} \mathbb{E} \left [ V(t+\tau,x_{t+\tau}^{\ast}) - V(t,x_t^{\ast}) \right ] \leq \rho_2(t).
	\end{equation}
\end{lemma}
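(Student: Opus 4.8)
The plan is to produce, uniformly in $\tau\in(0,T-t]$, an upper bound for the forward difference quotient $\tfrac1\tau\mathbb{E}_t[V(t+\tau,x^\ast_{t+\tau})-V(t,x^\ast_t)]$ by an $\mathcal{F}^s_{\nu,t}$-measurable random variable that is integrable for a.e.\ fixed $t$. The second assertion then follows from the first by taking full expectations, since $\mathbb{E}[\tfrac1\tau\mathbb{E}_t[\,\cdot\,]]=\tfrac1\tau\mathbb{E}[\,\cdot\,]$ and the dominators I construct are built from $\|x^\ast_t\|_{H^1_0(\Lambda)}^2$ and quantities whose time–space integral is finite because $x^\ast\in L^2([s,T]\times\Omega;H^1_0(\Lambda))$, so that $\rho_2(t):=\mathbb{E}[\rho_1]$ lies in $L^1(s,T)$.

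The first step is to split the increment into a pure time part and a pure space part,
\[
V(t+\tau,x^\ast_{t+\tau})-V(t,x^\ast_t)=\big[V(t+\tau,x^\ast_t)-V(t,x^\ast_t)\big]+\big[V(t+\tau,x^\ast_{t+\tau})-V(t+\tau,x^\ast_t)\big].
\]
The time part is bounded by the growth condition \eqref{growthcondition} by $C(1+\|x^\ast_t\|_{H^1_0(\Lambda)}^2)\tau$, which after division by $\tau$ is $\mathcal{F}^s_{\nu,t}$-measurable and in $L^1(\Omega)$ for a.e.\ $t$. For the space part I would use that by \eqref{semiconcavity} the map $V(t+\tau,\cdot)-C\|\cdot\|_{L^2(\Lambda)}^2$ is concave and continuous, hence admits a supergradient $\xi$ at $x^\ast_t$; selecting $\xi$ measurably and using that $V$ is deterministic while $x^\ast_t$ is $\mathcal{F}^s_{\nu,t}$-measurable makes $\xi$ itself $\mathcal{F}^s_{\nu,t}$-measurable, and
\[
V(t+\tau,x^\ast_{t+\tau})-V(t+\tau,x^\ast_t)\le\langle\xi,x^\ast_{t+\tau}-x^\ast_t\rangle_{L^2(\Lambda)}+C\|x^\ast_{t+\tau}-x^\ast_t\|_{L^2(\Lambda)}^2.
\]
Under $\mathbb{E}_t$ the quadratic term is controlled by the conditional time–regularity estimate $\mathbb{E}_t[\|x^\ast_{t+\tau}-x^\ast_t\|_{L^2(\Lambda)}^2]\le C\tau(1+\|x^\ast_t\|_{H^1_0(\Lambda)}^2)$, obtained from It\^o's formula and the coercivity of $\Delta$ exactly as in \eqref{timeregularityx}, again giving a uniform $L^1(\Omega)$ contribution after dividing by $\tau$; moreover the stochastic-integral part of $x^\ast_{t+\tau}-x^\ast_t$ vanishes under $\mathbb{E}_t$, so only the drift $\langle\xi,\mathbb{E}_t[\int_t^{t+\tau}(\Delta x^\ast_r+B(x^\ast_r,u^\ast_r))\,\mathrm{d}r]\rangle$ survives, of which the $B$–contribution is of order $\tau$ in $L^2(\Lambda)$ and hence harmless.

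The main obstacle is the unbounded Laplacian contribution $\langle\xi,\mathbb{E}_t[\int_t^{t+\tau}\Delta x^\ast_r\,\mathrm{d}r]\rangle$: because $\Delta x^\ast_r$ only lies in $H^{-1}(\Lambda)$ and the conditional mean $\mathbb{E}_t[x^\ast_{t+\tau}-x^\ast_t]$ is only of order $\tau^{1/2}$ in $L^2(\Lambda)$ (the heat drift being half-order in time), a crude $L^2(\Lambda)$ pairing produces a term of order $\tau^{-1/2}$, so neither the martingale cancellation nor the semiconcavity alone is enough. This is precisely the difficulty already resolved in \Cref{asconvergence}, and I would treat it by the same delicate regularity analysis: rewrite the transport term through the semigroup identity $\int_t^{t+\tau}\Delta S_{r-t}\,\mathrm{d}r=S_\tau-\mathrm{id}$ and pass to the $H^1_0(\Lambda)$–$H^{-1}(\Lambda)$ duality, where the contraction $\|S_\sigma\|_{L(H^1_0(\Lambda))}\le1$ turns the estimate into one of the form $\|\xi\|_{H^1_0(\Lambda)}\,\tfrac1\tau\int_t^{t+\tau}\|x^\ast_r\|_{H^1_0(\Lambda)}\,\mathrm{d}r$, which is uniformly bounded in $\tau$ by H\"older's inequality and $x^\ast\in L^2([s,T]\times\Omega;H^1_0(\Lambda))$ together with Lebesgue's differentiation theorem. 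The role of the growth condition \eqref{growthcondition}, assumed precisely on $H^1_0(\Lambda)$, is exactly to encode the boundedness of the transport $\langle DV,\Delta x\rangle$ and thereby to absorb this contribution into the same $C(1+\|x^\ast_t\|_{H^1_0(\Lambda)}^2)$ budget supplied by the time part; collecting the three uniform $L^1(\Omega)$ contributions yields $\rho_1$, and taking expectations yields $\rho_2$. I expect the $H^1_0(\Lambda)$-regularity bookkeeping for the supergradient $\xi$, and hence the uniformity in $\tau$ of this Laplacian term, to be the only genuinely technical point.
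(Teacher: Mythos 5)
Your proposal founders on exactly the point you defer to the end as ``regularity bookkeeping'': the $H^1_0(\Lambda)$-regularity of the supergradient $\xi$. A supergradient produced abstractly from the concavity of $V(t+\tau,\cdot)-C\|\cdot\|^2_{L^2(\Lambda)}$ is merely an element of $L^2(\Lambda)$; nothing in \eqref{growthcondition} or \eqref{semiconcavity} gives it any Sobolev regularity, and your entire treatment of the Laplacian term --- the $H^1_0$--$H^{-1}$ duality pairing and the bound $\|\xi\|_{H^1_0(\Lambda)}\,\tfrac{1}{\tau}\int_t^{t+\tau}\|x^{\ast}_r\|_{H^1_0(\Lambda)}\,\mathrm{d}r$ --- presupposes $\xi\in H^1_0(\Lambda)$, and later, to get $\rho_1\in L^1(\Omega)$ and $\rho_2\in L^1(s,T)$, it presupposes square-integrability of $\|\xi\|_{H^1_0(\Lambda)}$ in $\omega$ and $t$. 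This is not a technicality to be checked but the crux, and it cannot be repaired within your framework. The problem is compounded by your choice of decomposition (time increment at $x^{\ast}_t$ first, space increment at time $t+\tau$ second): the supergradient you need is then one of $V(t+\tau,\cdot)$, hence a family $\xi_\tau$ varying with $\tau$, so you would need the $H^1_0$ bound \emph{uniformly} in $\tau$; no candidate object with these properties exists under your hypotheses.

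The paper resolves precisely this issue by reading the lemma under the standing hypotheses of \Cref{verification}: there one \emph{assumes} adapted processes with $(G_t,p_t,P_t)\in D^{1,2,+}_{t+,x}V(t,x^{\ast}_t)$ and, crucially, $p\in L^2([s,T]\times\Omega;H^1_0(\Lambda))$. The paper's splitting is the opposite of yours --- space increment at the \emph{frozen} time $t$, time increment at the state $x^{\ast}_{t+\tau}$ --- exactly so that the fixed, $\tau$-independent, $H^1_0$-regular $p_t$ can serve as the supergradient: semiconcavity upgrades the viscosity superdifferential element to a global one, giving $V(t,x^{\ast}_{t+\tau})-V(t,x^{\ast}_t)\le\langle p_t,x^{\ast}_{t+\tau}-x^{\ast}_t\rangle_{L^2(\Lambda)}+C\|x^{\ast}_{t+\tau}-x^{\ast}_t\|^2_{L^2(\Lambda)}$. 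The Laplacian is then absorbed not through a maximal-function argument but through
\begin{equation*}
\mathbb{E}\left[\langle p_t,x^{\ast}_{t+\tau}-x^{\ast}_t\rangle_{L^2(\Lambda)}\right]\le\mathbb{E}\left[\|p_t\|^2_{H^1_0(\Lambda)}\right]^{\frac12}\mathbb{E}\left[\left\|\mathbb{E}_t\left[x^{\ast}_{t+\tau}-x^{\ast}_t\right]\right\|^2_{H^{-1}(\Lambda)}\right]^{\frac12},
\end{equation*}
combined with the mild formulation and the analyticity of the heat semigroup, $\|(S_\tau-I)x^{\ast}_t\|_{H^{-1}(\Lambda)}\le C\tau\|x^{\ast}_t\|_{H^1_0(\Lambda)}$, which yields the Lipschitz bound \eqref{lipschitztrajectory} of order $\tau$: your $\tau^{-1/2}$ obstruction disappears once the conditional mean is measured in $H^{-1}(\Lambda)$ rather than $L^2(\Lambda)$. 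The resulting majorant $C(1+\mathbb{E}[\|x^{\ast}_t\|^2_{H^1_0(\Lambda)}+\|p_t\|^2_{H^1_0(\Lambda)}])$ lies in $L^1(s,T)$ by the assumed regularity of $x^{\ast}$ and $p$ --- integrability your abstract $\xi$ cannot deliver. (The one genuine advantage of your ordering, avoiding the It\^o/Gr\"onwall step that controls $\mathbb{E}[\|x^{\ast}_{t+\tau}\|^2_{H^1_0(\Lambda)}]$ by $C(1+\mathbb{E}[\|x^{\ast}_t\|^2_{H^1_0(\Lambda)}])$, is far outweighed by the loss of a usable supergradient.)
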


\begin{proof}
	The proof follows along the same lines as in the finite dimensional case, see \cite{gozzi2010}. In order to obtain Lipschitz continuity of the state trajectories in $H^{-1}(\Lambda)$ we rely on the analyticity of the heat semigroup.
	
	First, we split up the increment into separate space and time increments:
	\begin{equation}
		V(t+\tau,x_{t+\tau}^{\ast}) - V(t,x_t^{\ast}) = V(t, x^{\ast}_{t+\tau}) - V(t,x^{\ast}_t) + V(t+\tau,x_{t+\tau}^{\ast}) - V(t, x^{\ast}_{t+\tau}).
	\end{equation}
	For the space increment, we observe that by the semiconcavity of $V(t,\cdot)$, we have for $(G_{t},p_{t},P_{t}) \in D^{1,2,+}_{t+,x}V(t,x^{\ast}_{t})$
	\begin{equation}
		V(t, x^{\ast}_{t+\tau}) - V(t,x^{\ast}_t) \leq \langle p_t, x^{\ast}_{t+\tau}-x^{\ast}_t\rangle_{L^2(\Lambda)} + C \| x^{\ast}_{t+\tau}-x^{\ast}_t \|_{L^2(\Lambda)}^2.
	\end{equation}
	For the time increment, we apply the growth condition \eqref{growthcondition}. Therefore, we obtain altogether
	\begin{equation}\label{estimatev}
		\begin{split}
			&V(t+\tau, x^{\ast}_{t+\tau}) - V(t,x^{\ast}_t)\\
			&\leq \langle p_{t}, x^{\ast}_{t+\tau}-x^{\ast}_t\rangle_{L^2(\Lambda)} + C \| x^{\ast}_{t+\tau}-x^{\ast}_t \|_{L^2(\Lambda)}^2 + C\left (1+\|x^{\ast}_{t+\tau}\|_{H^1_0(\Lambda)}^2 \right )\tau.
		\end{split}
	\end{equation}
	For the first term, we have
	\begin{equation}\label{estimatepx}
		\mathbb{E} \left [ \langle p_t , x^{\ast}_{t+\tau}-x^{\ast}_t \rangle_{L^2(\Lambda)} \right ]\leq \mathbb{E} \left [ \| p_t \|_{H^1_0(\Lambda)}^2 \right ]^{\frac12} \mathbb{E} \left [ \left \| \mathbb{E}^s_{\nu,t} \left [ x^{\ast}_{t+\tau} - x^{\ast}_t \right ] \right \|_{H^{-1}(\Lambda)}^2 \right ]^{\frac12}.
	\end{equation}
	For the second factor, we first note
	\begin{equation}\label{mildsolution}
		\mathbb{E}^s_{\nu,t} \left [ x^{\ast}_{t+\tau} - x^{\ast}_{t} \right ] = (S_{\tau}-I) x^{\ast}_{t} + \mathbb{E}^s_{\nu,t} \left [ \int_{t}^{t+\tau} S_{t+\tau-r} B(x^{\ast}_r,u^{\ast}_r) \mathrm{d}r \right ].
	\end{equation}
	Since the heat semigroup is analytic and $0$ is in the resolvent set of the Laplace operator with Dirichlet boundary conditions, using \cite[Chapter 2, Theorem 6.13]{pazy1983} we obtain for the first term
	\begin{equation}
		\| (S_{\tau} - I) x^{\ast}_t \|_{H^{-1}(\Lambda)} = \| (S_{\tau} - I) \Delta^{-\frac12} x^{\ast}_t \|_{L^2(\Lambda)} \leq C \tau \| x^{\ast}_t \|_{H^1_0(\Lambda)}.
	\end{equation}
	For the second term in \eqref{mildsolution}, we have by \eqref{assumptionsbsigma}
	\begin{equation}
		\begin{split}
			\mathbb{E}^s_{\nu,t} \left [ \int_{t}^{t+\tau} \left \| S_{t+\tau-r} B(x^{\ast}_r,u^{\ast}_r) \right \|_{H^{-1}(\Lambda)} \mathrm{d}r \right ] &\leq C\tau \sup_{r\in [t,t+\tau]} \mathbb{E}^s_{\nu,t} \left [ \| B(x^{\ast}_r,u^{\ast}_r) \|_{L^2(\Lambda)} \right ]\\
			&\leq C \left (1+\|x^{\ast}_{t}\|_{L^2(\Lambda)} \right )\tau.
		\end{split}
	\end{equation}
	Thus, we obtain
	\begin{equation}\label{lipschitztrajectory}
		\left \| \mathbb{E}^s_{\nu,t} \left [ x^{\ast}_{t+\tau} - x^{\ast}_t \right ] \right \|_{L^2(\Lambda)} \leq C\left (1+\|x^{\ast}_t\|_{H^1_0(\Lambda)} \right ) \tau,
	\end{equation}
	and therefore together with \eqref{estimatepx},
	\begin{equation}
		\mathbb{E} \left [ \langle p_t , x^{\ast}_{t+\tau}-x^{\ast}_t \rangle_{L^2(\Lambda)} \right ] \leq C\left (1+ \mathbb{E} \left [ \| x^{\ast}_t\|_{H^1_0(\Lambda)}^2 + \| p_t \|_{H^1_0(\Lambda)}^2 \right ] \right ) \tau.
	\end{equation}
	For the second term in \eqref{estimatev}, we obatin using standard regularity arguments for solutions to SPDEs
	\begin{equation}
		\mathbb{E} \left [ \| x^{\ast}_{t+\tau} - x^{\ast}_t \|_{L^2(\Lambda)}^2 \right ] \leq C \tau.
	\end{equation}
	For the last term in \eqref{estimatev}, we first observe
	\begin{equation}
		\begin{split}
			\| x^{\ast}_{t+\tau} \|_{H^1_0(\Lambda)}^2 &= \| x^{\ast}_{t} \|_{H^1_0(\Lambda)}^2 + \int_t^{t+\tau} \langle \Delta x_r^{\ast} + B(x^{\ast}_r,u^{\ast}_r), x^{\ast}_r \rangle_{H^1_0(\Lambda)} \mathrm{d}r\\
			&\quad + \int_t^{t+\tau} \| \Sigma(x^{\ast}_r,u^{\ast}_r) \|_{L_2(\Xi,H^1_0(\Lambda))}^2 \mathrm{d}r + \int_t^{t+\tau} \langle x^{\ast}_r , \Sigma(x_r^{\ast},u^{\ast}_r) \mathrm{d}W_r \rangle_{H^1_0(\Lambda)}.
		\end{split}
	\end{equation}
	Therefore,
	\begin{equation}
		\begin{split}
			&\mathbb{E} \left [ \| x^{\ast}_{t+\tau} \|_{H^1_0(\Lambda)}^2 \right ]\\
			&\leq \mathbb{E} \left [ \|x^{\ast}_t \|_{H^1_0(\Lambda)}^2 \right ] + \int_t^{t+\tau} \mathbb{E} \left [ \| B(x^{\ast}_r , u^{\ast}_r ) \|_{L^2(\Lambda)}^2 + \| \Sigma(x^{\ast}_r,u^{\ast}_r) \|_{L_2(\Xi,H^1_0(\Lambda))}^2 \right ] \mathrm{d}r.
		\end{split}
	\end{equation}
	Using the growth condition on $B$ and $\Sigma$ and applying Gr\"onwall's inequality, we obtain for the last term in \eqref{estimatev}
	\begin{equation}
		\mathbb{E} \left [ C \left (1+ \| x^{\ast}_{t+\tau} \|_{L^2(\Lambda)}^2 \right ) \tau \right ] \leq C \left ( 1+ \mathbb{E} \left [ \left \| x^{\ast}_t \right \|_{H^1_0(\Lambda)}^2 \right ] \right ) \tau.
	\end{equation}
	Thus, we have the bound
	\begin{equation}
		\frac{1}{\tau} \mathbb{E} \left [ V(t+\tau, x^{\ast}_{t+\tau}) - V(t,x^{\ast}_t) \right ] \leq C\left (1+ \mathbb{E} \left [ \| x^{\ast}_t\|_{H^1_0(\Lambda)}^2 + \| p_t \|_{H^1_0(\Lambda)}^2 \right ] \right ),
	\end{equation}
	where the right-hand side is in $L^1(s,T)$. This proves \eqref{vlipschitzt}. The proof of \eqref{vlipschitzomega} is similar.
\end{proof}

Now, let us prove Theorem \ref{verification}.

\begin{proof}
	Using Proposition \ref{testfunction}, we obtain for given $(t,\omega)\in [s,T]\times\Omega$ a function $\phi\in C^{1,2}((s,T)\times L^2(\Lambda))$ such that $V-\phi$ attains its strict global maximum over $[t,T)\times L^2(\Lambda)$ at the point $(t, x^{\ast}_t(\omega))$, and
	\begin{equation}
		(\phi(t, x^{\ast}_t), \partial_t\phi(t, x^{\ast}_t), D\phi (t, x^{\ast}_t), D^2 \phi(t, x^{\ast}_t)) = (V(t, x^{\ast}_t),G_t,p_t,P_t).
	\end{equation}
	For $\phi$ and $\phi^n$, we use the same construction as in the proof of Corollary \ref{gleqh}.
	
	For $t> T$, we set $V(t, x^{\ast}_t) := V(T,x_T^{\ast})$. Then we have
	\begin{equation}
		\begin{split}
			&\mathbb{E} \left [ V(T,x_T^{\ast}) \right ] - \mathbb{E} \left [ V(s,x_s^{\ast}) \right ]\\
			&= \lim_{\tau\downarrow 0} \frac{1}{\tau} \left ( \int_{T}^{T+\tau} \mathbb{E} \left [ V(t,x_t^{\ast}) \right ] \mathrm{d}t - \int_{s}^{s+\tau} \mathbb{E} \left [ V(t,x_t^{\ast}) \right ] \mathrm{d}t\right )\\
			& = \lim_{\tau\downarrow 0} \frac{1}{\tau} \left ( \int_{s+\tau}^{T+\tau} \mathbb{E} \left [ V(t,x_t^{\ast}) \right ] \mathrm{d}t - \int_{s}^{T} \mathbb{E} \left [ V(t,x_t^{\ast}) \right ] \mathrm{d}t \right )\\
			& = \lim_{\tau\downarrow 0} \int_s^T \frac{1}{\tau} \mathbb{E} \left [ V(t+\tau,x_{t+\tau}^{\ast}) - V(t,x_t^{\ast}) \right ] \mathrm{d}t.
		\end{split}
	\end{equation}
	By Lemma \ref{fatou}, we can apply Fatou's lemma to obtain
	\begin{equation}\label{limit}
		\mathbb{E} \left [ V(T,x_T^{\ast}) \right ] - \mathbb{E} \left [ V(s,x_s^{\ast}) \right ] \leq \int_s^T \limsup_{\tau\downarrow 0} \left ( \frac{1}{\tau} \mathbb{E} \left [ V(t+\tau,x_{t+\tau}^{\ast}) - V(t,x_t^{\ast}) \right ] \right ) \mathrm{d}t.
	\end{equation}
	Since $V-\phi$ attains its maximum at $(t,x^{\ast}_t)$, we have
	\begin{equation}\label{approximationterm}
		\begin{split}
			&\mathbb{E}^s_{\nu,t} \left [ V(t+\tau, x^{\ast}_{t+\tau}) - V(t, x^{\ast}_t) \right ]\\
			&\leq \mathbb{E}^s_{\nu,t} \left [ \phi^n(t+\tau,x^{\ast}_{t+\tau}) - \phi^n(t,x^{\ast}_{t}) \right ]\\
			&\quad + \mathbb{E}^s_{\nu,t} \left [ \phi(t+\tau,x^{\ast}_{t+\tau}) - \phi^n(t+\tau,x^{\ast}_{t+\tau}) + \phi^n(t,x^{\ast}_{t}) - \phi(t,x^{\ast}_{t}) \right ].
		\end{split}
	\end{equation}
	For the last line, we first observe
	\begin{equation}
		\phi^n(\theta,x) - \phi(\theta,x) = \frac12 \langle x- x^{\ast}_t, (P^n_t - P_t) (x- x^{\ast}_t) \rangle_{L^2(\Lambda)},
	\end{equation}
	hence $\phi^n(t, x^{\ast}_t) - \phi(t, x^{\ast}_t)$ vanishes. Furthermore
	\begin{equation}\label{constant}
		\begin{split}
			&| \mathbb{E}^s_{\nu,t} \left [ \phi(t+\tau, x^{\ast}_{t+\tau}) - \phi^n(t+\tau, x^{\ast}_{t+\tau}) \right ] |\\
			& \leq \frac12 \mathbb{E}^s_{\nu,t} \left [ | \langle  x^{\ast}_{t+\tau} - x^{\ast}_t, (P_t-P^n_t)( x^{\ast}_{t+\tau} -  x^{\ast}_t) \rangle_{L^2(\Lambda)} | \right ]\\
			&\leq \frac12 \mathbb{E}^s_{\nu,t} \left [ \|  x^{\ast}_{t+\tau}-   x^{\ast}_t \|_{L^2(\Lambda)}^4 \right ]^{\frac12} \| P_t - P_t^n \|_{L(L^2(\Lambda))}\\
			&\leq C \tau \| P_t - P_t^n \|_{L(L^2(\Lambda))}.
		\end{split}
	\end{equation}
	As in the proof of Lemma \ref{asconvergence}, we fix $\omega\in\Omega$ and apply It\^o's formula to $\phi^n$ on the probability space $(\Omega,\mathcal{F}^{\nu}, \mathbb{P}(\,\cdot\,|\mathcal{F}^s_{\nu,t})(\omega))$. This yields for every $t\in (s,T)$ and every $\tau \geq 0$ such that $t+\tau \leq T$
	\begin{equation}
		\begin{split}
			&\mathbb{E}^s_{\nu,t} \left [ \phi^n(t+\tau,x^{\ast}_{t+\tau}) - \phi^n(t,x^{\ast}_{t}) \right ](\omega)\\
			&= \mathbb{E}^s_{\nu,t} \left [ \int_t^{t+\tau} \partial_{\theta} \phi^n(r, x^{\ast}_r) + \langle D\phi^n(r, x^{\ast}_r), \Delta  x^{\ast}_r + B( x^{\ast}_r, u^{\ast}_r) \rangle \mathrm{d}r \right ](\omega)\\
			&\quad + \mathbb{E}^s_{\nu,t} \left [ \int_t^{t+\tau} \frac12 \text{tr}(\Sigma ( x^{\ast}_r, u^{\ast}_r)^{\ast} D^2\phi^n(r, x^{\ast}_r) \Sigma( x^{\ast}_r, u^{\ast}_r)) \mathrm{d}r\right ](\omega).
		\end{split}
	\end{equation}

	Now, we take the expectation in \eqref{approximationterm}, divide by $\tau$ and take the limit superior $\tau \to 0$. By Lemma \ref{fatou}, we can again apply Fatou's lemma which yields
	\begin{equation}\label{infinitesimal}
		\begin{split}
			&\limsup_{\tau\downarrow 0} \frac{1}{\tau} \mathbb{E} \left [ V(t+\tau, x^{\ast}_{t+\tau}) - V(t, x^{\ast}_{t}) \right ]\\
			&\leq \mathbb{E} \left [ \limsup_{\tau\downarrow 0} \frac{1}{\tau} \mathbb{E}^s_{\nu,t} \left [ V(t+\tau, x^{\ast}_{t+\tau}) - V(t, x^{\ast}_{t}) \right ](\omega) \right ]\\
			&\leq \mathbb{E} \Bigg [ \limsup_{\tau\downarrow 0} \frac{1}{\tau} \mathbb{E}^s_{\nu,t} \Bigg [ \int_t^{t+\tau} \partial_{\theta} \phi^n(r, x^{\ast}_r) + \langle D\phi^n(r, x^{\ast}_r), \Delta  x^{\ast}_r + B( x^{\ast}_r, u^{\ast}_r) \rangle \mathrm{d}r\\
			&\qquad + \int_t^{t+\tau} \frac12 \text{tr}(\Sigma ( x^{\ast}_r, u^{\ast}_r)^{\ast} D^2\phi^n(r, x^{\ast}_r) \Sigma( x^{\ast}_r, u^{\ast}_r)) \mathrm{d}r \Bigg ](\omega)+ C \| P_t - P_t^n \|_{L(L^2(\Lambda))} \Bigg ]
		\end{split}
	\end{equation}
	By Lemma \ref{asconvergence}, this implies
	\begin{equation}
		\begin{split}
			&\limsup_{\tau\downarrow 0} \frac{1}{\tau} \mathbb{E} \left [ V(t+\tau, x^{\ast}_{t+\tau}) - V(t, x^{\ast}_{t}) \right ]\\
			&\leq \mathbb{E} \left [ G_t + \langle p_t, \Delta \bar x_t + B( \bar x_t, \bar u_t) \rangle + \frac12 \text{tr}(\Sigma ( \bar x_t, \bar u_t)^{\ast} P^n_t \Sigma( \bar x_t, \bar u_t)) + C \| P_t - P_t^n \|_{L(L^2(\Lambda))} \right ].
		\end{split}
	\end{equation}
	Together with \eqref{limit}, this yields after taking the limit $n\to\infty$ and plugging in the terminal condition for $V$
	\begin{equation}\label{comparison}
		\mathbb{E} \left [ \int_s^T L( x^{\ast}_t, u^{\ast}_t) \mathrm{d}t + H( x^{\ast}_T) \right ] \leq V(s,x),
	\end{equation}
	which concludes the proof.
\end{proof}

\begin{proposition}\label{propertiesofvi}
	Let $H:L^2(\Lambda)\to \mathbb{R}$ be Lipschitz continuous with respect to the $H^{-1}(\Lambda)$-norm, and twice Fr\'echet differentiable with bounded second derivative. Then there exists a constant $C$ such that for every $t\in [s,T)$ and $x\in H^1_0(\Lambda)$, 
	\begin{equation}
		V(t+\tau, x) - V(t,x) \leq C \left ( 1 + \|x\|_{H^1_0(\Lambda)}^2 \right ) \tau.
	\end{equation}
\end{proposition}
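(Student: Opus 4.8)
The plan is to bound $V(t+\tau,x)-V(t,x)$ from above by exhibiting, for the shorter-horizon problem started at $(t+\tau,x)$, an admissible control whose cost is close to $V(t,x)$. Since the coefficients $B,\Sigma,L,H$ in \eqref{ch4:costfunctional}--\eqref{ch4:stateequation} are time-independent, the value function depends on $t$ only through the remaining horizon $T-t$, and I would use this to compare the two problems by a time-shift. Concretely, fix $\epsilon>0$ and choose an $\epsilon$-optimal control $u$ for $V(t,x)$, with variational solution $x^u$ on $[t,T]$, $x^u_t=x$. Shifting $u$ by $\tau$ gives an admissible control for the problem started at $(t+\tau,x)$ whose trajectory has the same law as $x^u$ restricted to $[t,T-\tau]$, so that
\[
V(t+\tau,x)\le \mathbb{E}\Big[\int_t^{T-\tau}L(x^u_r,u_r)\,\mathrm{d}r+H(x^u_{T-\tau})\Big].
\]
Subtracting $J(t,x;u)\le V(t,x)+\epsilon$ leaves two correction terms,
\[
V(t+\tau,x)-V(t,x)\le \epsilon+\mathbb{E}\Big[-\int_{T-\tau}^{T}L(x^u_r,u_r)\,\mathrm{d}r+H(x^u_{T-\tau})-H(x^u_T)\Big],
\]
and it remains to show the expectation is $O\big((1+\|x\|_{H^1_0(\Lambda)}^2)\tau\big)$ uniformly in $u$ and $\epsilon$.

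The running-cost term is routine: by the growth condition \eqref{growthlh} and the standard a priori bound $\sup_r\mathbb{E}\|x^u_r\|_{L^2(\Lambda)}^2\le C(1+\|x\|_{L^2(\Lambda)}^2)$, with $C$ depending only on the coefficient bounds \eqref{assumptionsbsigma} and hence uniform in $u$, one obtains $\big|\mathbb{E}\int_{T-\tau}^T L\,\mathrm{d}r\big|\le C(1+\|x\|_{L^2(\Lambda)}^2)\tau$. The terminal term is the crux, and this is where the hypotheses on $H$ enter. Since $H$ is Lipschitz with respect to $\|\cdot\|_{H^{-1}(\Lambda)}$, its Fréchet derivative obeys $|\langle DH(y),z\rangle_{L^2(\Lambda)}|\le C\|z\|_{H^{-1}(\Lambda)}$, so $DH(y)\in H^1_0(\Lambda)$ with $\sup_y\|DH(y)\|_{H^1_0(\Lambda)}\le C$, while $D^2H$ is bounded by assumption. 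I would then apply the Itô formula for variational solutions to $r\mapsto H(x^u_r)$ on $[T-\tau,T]$, which is legitimate precisely because $DH$ takes values in $H^1_0(\Lambda)$ and therefore the unbounded term pairs in the duality $H^1_0(\Lambda)\times H^{-1}(\Lambda)$:
\[
\mathbb{E}[H(x^u_T)-H(x^u_{T-\tau})]=\mathbb{E}\int_{T-\tau}^{T}\Big(\langle DH(x^u_r),\Delta x^u_r+B(x^u_r,u_r)\rangle_{H^1_0(\Lambda)\times H^{-1}(\Lambda)}+\tfrac12\text{tr}(\Sigma^\ast D^2H(x^u_r)\Sigma)\Big)\mathrm{d}r.
\]

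Here the unbounded contribution is estimated by $\langle DH(x^u_r),\Delta x^u_r\rangle_{H^1_0(\Lambda)\times H^{-1}(\Lambda)}\le \|DH(x^u_r)\|_{H^1_0(\Lambda)}\|\Delta x^u_r\|_{H^{-1}(\Lambda)}\le C\|x^u_r\|_{H^1_0(\Lambda)}$, the drift term by $C(1+\|x^u_r\|_{L^2(\Lambda)})$ via \eqref{assumptionsbsigma}, and the trace term by $C(1+\|x^u_r\|_{L^2(\Lambda)}^2)$ using the bound on $D^2H$ together with $\|\Sigma(x^u_r,u_r)\|_{L_2(\Xi,L^2(\Lambda))}\le C(1+\|x^u_r\|_{L^2(\Lambda)})$. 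Integrating over $[T-\tau,T]$ and invoking the uniform $H^1_0(\Lambda)$-a priori estimate $\sup_r\mathbb{E}\|x^u_r\|_{H^1_0(\Lambda)}^2\le C(1+\|x\|_{H^1_0(\Lambda)}^2)$ --- whose proof relies exactly on the assumed growth $\|\Sigma(x,u)\|_{L_2(\Xi,H^1_0(\Lambda))}\le C(1+\|x\|_{H^1_0(\Lambda)})$ --- yields $\mathbb{E}|H(x^u_T)-H(x^u_{T-\tau})|\le C(1+\|x\|_{H^1_0(\Lambda)}^2)\tau$. Combining the three estimates and letting $\epsilon\downarrow 0$ gives the claim.

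The main obstacle is the unbounded pairing $\langle DH(x^u_r),\Delta x^u_r\rangle$. A naive estimate of $\mathbb{E}|H(x^u_T)-H(x^u_{T-\tau})|$ by Lipschitz continuity alone controls it only by $\mathbb{E}\|x^u_T-x^u_{T-\tau}\|_{H^{-1}(\Lambda)}=O(\sqrt{\tau})$, because of the martingale part of the increment, which is too weak; passing to the Itô formula converts this into a genuine $O(\tau)$ drift integral but forces one to pair $DH$ with $\Delta x^u_r$ in $H^{-1}(\Lambda)$. This is exactly why both hypotheses on $H$ are needed --- Lipschitz continuity in $H^{-1}(\Lambda)$ to place $DH$ in $H^1_0(\Lambda)$, and boundedness of $D^2H$ to control the trace --- in tandem with propagation of $H^1_0(\Lambda)$-regularity of the state. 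A secondary point to make rigorous is the time-homogeneity/shift reduction of the first step, which uses the time-independence of the coefficients and the fact that $\mathcal{U}_s$ is a union over all reference probability spaces.
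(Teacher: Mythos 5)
Your overall strategy --- comparing $V(t+\tau,x)$ with $V(t,x)$ via a time shift of an $\epsilon$-optimal control and splitting the difference into a running-cost piece and a terminal-cost piece --- is exactly the paper's, and your treatment of the running-cost piece and of the required uniform a priori bounds (including the propagation of $H^1_0(\Lambda)$-regularity of the state, which rests on the growth of $\Sigma$ in $L_2(\Xi,H^1_0(\Lambda))$) matches the paper's proof. The divergence, and the one genuine gap, is your treatment of the terminal increment $\mathbb{E}[H(x^u_{T-\tau})-H(x^u_T)]$. You apply It\^o's formula to $r\mapsto H(x^u_r)$ and assert this is ``legitimate precisely because $DH$ takes values in $H^1_0(\Lambda)$.'' That is not enough. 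The It\^o formulas for variational solutions invoked in this paper (\cite[Lemma 2.15]{pardoux2021}, \cite[Section 3]{krylov2013}) cover quadratic and bilinear functionals, not compositions with a general $C^2$ map; the known general versions (Pardoux's thesis, Gy\"ongy--Krylov) require, beyond $DH(y)\in H^1_0(\Lambda)$, continuity of $DH$ as a map into $H^1_0(\Lambda)$ and continuity of $D^2H$, neither of which follows from the hypotheses of the proposition: Lipschitz continuity of $H$ in $H^{-1}(\Lambda)$ together with boundedness of $D^2H$ yields only \emph{weak} continuity of $DH:L^2(\Lambda)\to H^1_0(\Lambda)$ (strong $L^2$-convergence of $DH(y_n)$ plus the uniform $H^1_0$-bound), and the proposition assumes boundedness, not continuity, of the second derivative, which is what any proof of It\^o's formula needs to identify the trace term. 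So as written this step is unjustified; it could in principle be repaired by a Galerkin or mollification argument, but that is genuine work your proposal does not carry out.

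The paper's proof is designed precisely to avoid this. It never applies It\^o's formula to $H(x^u_r)$: it Taylor-expands $H$ to second order, bounding the remainder by $C\,\mathbb{E}\|x^u_{T-\tau}-x^u_T\|^2_{L^2(\Lambda)}\le C(1+\|x\|^2_{L^2(\Lambda)})\tau$ using only the bound on $H_{xx}$, and handles the first-order term by a conditional-expectation trick: since $H_x(x^u_{T-\tau})$ is $\mathcal{F}_{T-\tau}$-measurable, the martingale part of the increment drops out, giving $\mathbb{E}[\langle H_x(x^u_{T-\tau}),x^u_{T-\tau}-x^u_T\rangle_{L^2(\Lambda)}]\le \mathbb{E}[\|H_x(x^u_{T-\tau})\|^2_{H^1_0(\Lambda)}]^{1/2}\,\mathbb{E}[\|\mathbb{E}_{T-\tau}[x^u_T-x^u_{T-\tau}]\|^2_{H^{-1}(\Lambda)}]^{1/2}$, and the conditional increment is then estimated in $H^{-1}(\Lambda)$ through the mild formulation and the analyticity of the heat semigroup, $\|(S_\tau-I)y\|_{H^{-1}(\Lambda)}\le C\tau\|y\|_{H^1_0(\Lambda)}$ (\cite[Chapter 2, Theorem 6.13]{pazy1983}). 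This achieves exactly the $O(\tau)$ gain your It\^o step was aiming for --- eliminating the martingale contribution that would otherwise only give $O(\sqrt{\tau})$, an obstacle you correctly identified --- while using nothing beyond the stated hypotheses. If you replace your It\^o step by this argument, the rest of your proposal goes through unchanged.
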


\begin{proof}
	Again, the proof is similar as in the finite dimensional case, see \cite[Section IV.8]{fleming2006}, and the Lipschitz continuity of the state trajectories in $H^{-1}(\Lambda)$ relies on the analyticity of the heat semigroup.
	
	Let $u$ be an admissible control defined on $[t,T]$, and let $x^u$ denote the associated state with initial condition $x^u_t=x$. We introduce a time shifted control $\tilde u$ defined on $[t+\tau,T]$ given by
	\begin{equation}
		\tilde u(r) := u(r-\tau).
	\end{equation}
	Then, we obtain
	\begin{equation}
		\begin{split}
			J(t+\tau,x;\tilde u) &= \mathbb{E} \left [ \int_{t+\tau}^{T} L(x^{\tilde u}_r,\tilde u_r) \mathrm{d}r + H(x^{\tilde u}_{T}) \right ]\\
			&= \mathbb{E} \left [ \int_t^{T-\tau} L(x^u_r,u_r) \mathrm{d}r + H(x^u_{T-\tau}) \right ].
		\end{split}
	\end{equation}
	Hence,
	\begin{equation}
		J(t+\tau,x;\tilde u) - J(t,x,u)	= \mathbb{E} \left [ - \int_{T-\tau}^{T} L(x^u_r,u_r) \mathrm{d}r +  H(x^u_{T-\tau}) - H(x^u_{T}) \right ].
	\end{equation}
	For the running costs, we have by assumption \eqref{growthlh} and standard estimates for solutions of SPDEs
	\begin{equation}\label{runningcosts}
		\mathbb{E} \left [ - \int_{T-\tau}^{T} L(x^u_r,u_r) \mathrm{d}r \right ]\leq C \left (1+ \| x \|_{L^2(\Lambda)}^2 \right ) \tau.
	\end{equation}
	For the terminal costs, using the boundedness of $H_{xx}$ and a Taylor expansion, we obtain
	\begin{equation}\label{terminalcondition}
		\begin{split}
			&\mathbb{E} \left [ H(x^u_{T-\tau}) - H(x^u_{T}) \right ]\\
			&\leq \mathbb{E} \left [ \langle H_x( x^u_{T-\tau}), (x^u_{T-\tau} - x^u_{T}) \rangle_{L^2(\Lambda)} \right ] + C \mathbb{E} \left [ \| x^u_{T-\tau} - x^u_{T} \|_{L^2(\Lambda)}^2 \right ].
		\end{split}
	\end{equation}
	For the first summand, we have
	\begin{equation}
		\begin{split}
			&\mathbb{E} \left [ \langle H_x( x^u_{T-\tau}) ,(x^u_{T-\tau} - x^u_{T}) \rangle_{L^2(\Lambda)} \right ]\\
			&\leq \mathbb{E} \left [ \| H_x(x^u_{T-\tau}) \|_{H^1_0(\Lambda)}^2 \right ]^{\frac12} \mathbb{E} \left [ \left \| \mathbb{E}_{T-\tau} [ x^u_{T} - x^u_{T-\tau} ] \right \|_{H^{-1}(\Lambda)}^2 \right ]^{\frac12}.
		\end{split}
	\end{equation}
	Since $H$ is Lipschitz continuous with respect to the $H^{-1}(\Lambda)$-norm, the Fr\'echet derivative of $H$ maps to $H^1_0(\Lambda)$ and is bounded. For the second factor, we obtain analogous to \eqref{lipschitztrajectory}
	\begin{equation}\label{estimatetimeincrement}
		\mathbb{E} \left [ \left \| \mathbb{E}_{T-\tau} [ x^u_{T} - x^u_{T-\tau} ] \right \|_{H^{-1}(\Lambda)}^2 \right ]^{\frac12} \leq C\left (1+\|x\|_{H^1_0(\Lambda)}^2 \right ) \tau.
	\end{equation}
	For the second summand in \eqref{terminalcondition} we have
	\begin{equation}
		\mathbb{E} \left [ \| x^u_{T-\tau} - x^u_{T} \|_{L^2(\Lambda)}^2 \right ] \leq C \left ( 1+ \|x\|_{L^2(\Lambda)}^2 \right )\tau
	\end{equation}
	by standard regularity results for solutions to SPDEs. Hence, we obtain together with \eqref{runningcosts}
	\begin{equation}
		J(t+\tau,x;\tilde u) - J(t,x,u) \leq C\left (1+\|x\|_{H^1_0(\Lambda)}^2 \right ) \tau.
	\end{equation}
	
	Now, for arbitrary $\varepsilon>0$, let $u^{\varepsilon}$ be a control such that
	\begin{equation}
		J(t,x;u^{\varepsilon}) \leq V(t,x) + \varepsilon.
	\end{equation}
	Then we have
	\begin{equation}
		\begin{split}
			V(t+\tau,x)-V(t,x) &\leq J(t+\tau,x;\tilde u^{\varepsilon}) - J(t,x,u^{\varepsilon}) + \varepsilon\\
			&\leq C \left (1+\|x\|_{H^1_0(\Lambda)}^2 \right ) \tau + \varepsilon.
		\end{split}
	\end{equation}
	Since this holds for arbitrary $\varepsilon$, this concludes the proof.
\end{proof}

\begin{proposition}\label{propertiesofv}
	Let $L,H$ be Lipschitz continuous and semiconcave in $x$ uniformly in $u$. Let $B,\Sigma$ be Fr\'echet differentiable in $x$ with Lipschitz continuous derivative uniformly in $u$. Then the value function $V$ of the control problem \eqref{ch4:costfunctional} and \eqref{ch4:stateequation} is semiconcave uniformly in $t\in [s,T]$, i.e. there exists a $C>0$ such that for every $t\in (s,T]$
	\begin{equation}
		V(t,\cdot) - C \| \cdot \|_{L^2(\Lambda)}^2
	\end{equation}
	is concave.
\end{proposition}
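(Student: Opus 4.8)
The plan is to exploit the representation $V(t,\cdot)=\inf_{u\in\mathcal{U}_s}J(t,\cdot\,;u)$ together with the fact that an infimum of uniformly semiconcave functions is semiconcave with the same constant. Since the property ``$f-C\|\cdot\|_{L^2(\Lambda)}^2$ is concave'' is stable under taking infima, it suffices to exhibit a constant $C$, independent of $u$ and $t$, for which each cost functional $J(t,\cdot\,;u)$ is semiconcave with constant $C$. As $x\mapsto J(t,x;u)$ is continuous (indeed Lipschitz, by Lipschitz dependence of the state on its initial datum and the bounds on $L,H$), it is enough to verify the midpoint inequality
\begin{equation}
J(t,x;u)+J(t,y;u)-2J\!\left(t,\tfrac{x+y}{2};u\right)\le C\|x-y\|_{L^2(\Lambda)}^2
\end{equation}
for all $x,y\in L^2(\Lambda)$, with $C$ uniform in $u$ and $t$; midpoint concavity of $J(t,\cdot\,;u)-C\|\cdot\|_{L^2(\Lambda)}^2$ then upgrades to concavity by continuity.

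To this end I would fix a control $u$ and a driving noise, write $m:=\tfrac{x+y}{2}$, and denote by $x^x,x^y,x^m$ the solutions of \eqref{ch4:stateequation} started from $x,y,m$. Two auxiliary processes govern the estimate: the \emph{first difference} $x^x-x^y$ and the \emph{second difference} $\zeta:=x^x+x^y-2x^m$, the latter satisfying $\zeta_t=0$. Standard a priori estimates for \eqref{ch4:stateequation} under \eqref{assumptionsbsigma} yield the Lipschitz bounds $\mathbb{E}\sup_{r}\|x^x_r-x^y_r\|_{L^2(\Lambda)}^2\le C\|x-y\|_{L^2(\Lambda)}^2$ and, at the level of fourth moments, $\mathbb{E}\sup_{r}\|x^x_r-x^y_r\|_{L^2(\Lambda)}^4\le C\|x-y\|_{L^2(\Lambda)}^4$.

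The crux is the quadratic bound on the second difference,
\begin{equation}\label{plan:secondorder}
\mathbb{E}\sup_{r}\|\zeta_r\|_{L^2(\Lambda)}^2\le C\|x-y\|_{L^2(\Lambda)}^4.
\end{equation}
To obtain it I would write the equation for $\zeta$, whose drift and diffusion forcing terms are $B(x^x_r,u_r)+B(x^y_r,u_r)-2B(x^m_r,u_r)$ and the analogous expression for $\Sigma$. Using that $B,\Sigma$ are Fr\'echet differentiable in $x$ with Lipschitz derivative, I split each forcing term as $\big[B(x^x_r)+B(x^y_r)-2B(\tfrac{x^x_r+x^y_r}{2})\big]+2\big[B(\tfrac{x^x_r+x^y_r}{2})-B(x^m_r)\big]$: the first bracket is $O(\|x^x_r-x^y_r\|_{L^2(\Lambda)}^2)$ by Lipschitz continuity of the derivative, while the second bracket equals $2\big[B(\cdot)-B(\cdot-\tfrac12\zeta_r)\big]$ and is therefore controlled by $C\|\zeta_r\|_{L^2(\Lambda)}$ via the Lipschitz continuity of $B$ from \eqref{assumptionsbsigma}. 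In the variational (Gelfand-triple) formulation the unbounded term contributes $\langle\Delta\zeta_r,\zeta_r\rangle_{H^{-1}(\Lambda)\times H^1_0(\Lambda)}\le 0$ and hence poses no difficulty; applying It\^o's formula to $\|\zeta_r\|_{L^2(\Lambda)}^2$, the $\zeta$-linear parts of the forcing are absorbed by Gr\"onwall's inequality and the quadratic parts feed in the fourth-moment bound, giving \eqref{plan:secondorder}. Propagating the second-order smallness of the initial configuration through the nonlinear SPDE while taming the unbounded drift is where I expect the main work to lie; in contrast to the finite-dimensional argument (see \cite[Section IV.8]{fleming2006} and \cite{gozzi2010}), the Laplacian must be handled through the dissipativity inherent in the variational setting rather than treated as a bounded perturbation.

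Finally I would combine these ingredients. For the running cost I write
\begin{equation}
L(x^x_r,u_r)+L(x^y_r,u_r)-2L(x^m_r,u_r)=\underbrace{\Big[L(x^x_r,u_r)+L(x^y_r,u_r)-2L\big(\tfrac{x^x_r+x^y_r}{2},u_r\big)\Big]}_{\le\,C\|x^x_r-x^y_r\|_{L^2(\Lambda)}^2}+\underbrace{2\Big[L\big(\tfrac{x^x_r+x^y_r}{2},u_r\big)-L(x^m_r,u_r)\Big]}_{\le\,C\|\zeta_r\|_{L^2(\Lambda)}},
\end{equation}
using the semiconcavity of $L(\cdot,u)$ for the first bracket and its Lipschitz continuity for the second, and I treat the terminal cost $H$ identically. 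Taking expectations and integrating in $r$, the first brackets contribute $C\|x-y\|_{L^2(\Lambda)}^2$ by the second-moment Lipschitz bound, while the second brackets contribute at most $C\,\mathbb{E}\!\int_t^T\|\zeta_r\|_{L^2(\Lambda)}\,\mathrm{d}r\le C\big(\mathbb{E}\sup_r\|\zeta_r\|_{L^2(\Lambda)}^2\big)^{1/2}\le C\|x-y\|_{L^2(\Lambda)}^2$ by \eqref{plan:secondorder}. Since every constant depends only on the (uniform in $u$) Lipschitz and semiconcavity moduli of $L,H,B,\Sigma$ and on the fixed horizon, the midpoint inequality holds with $C$ independent of $u$ and $t\in[s,T]$, and passing to the infimum over $u$ establishes the uniform semiconcavity of $V$.
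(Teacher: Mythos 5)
Your proof is correct and follows essentially the same route as the paper, which simply invokes the finite-dimensional argument of \cite[Chapter 4, Proposition 4.5]{yong1999} (infimum of uniformly semiconcave cost functionals, plus first- and second-difference trajectory estimates for initial data $x$, $y$, and their midpoint) and asserts that it carries over upon replacing finite-dimensional derivatives by Fr\'echet derivatives. Your additional observation that the unbounded term enters only through $\langle \Delta\zeta_r,\zeta_r\rangle_{H^{-1}(\Lambda)\times H^1_0(\Lambda)}\le 0$ in the variational setting, so that the second-difference bound survives in infinite dimensions, is exactly the point the paper's citation leaves implicit.
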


\begin{proof}		
	The corresponding result in the finite dimensional case can be found in \cite[Chapter 4, Proposition 4.5]{yong1999}. The proof in the infinite dimensional case is exactly the same upon replacing the finite dimensional derivatives by Fr\'echet derivatives.
\end{proof}

Theorem \ref{verification} is stated in terms of the value function $V$. However, in most cases it is difficult to obtain a useful representation of the value function directly from its definition. Therefore, instead one uses the Hamilton--Jacobi--Bellman (HJB) equation to characterize the value function. In our setting the HJB equation reads
\begin{equation}\label{ch4:hjb}
	\begin{cases}
		V_s + \langle \Delta x, DV \rangle_{L^2(\Lambda)} + \inf_{u\in U} \mathcal{H}(x,u,DV,D^2 V) = 0\\
		V(T,x) = H(x).
	\end{cases}
\end{equation}
In most interesting cases, the value function is not sufficiently regular to satisfy the HJB equation in a classical sense. Therefore, various generalized notions of solution have been introduced for these kinds of equations, one of which is the notion of $B$-continuous viscosity solutions. Let us recall the definition here. To this end, let us first introduce the relevant class of test functions, see \cite[Definition 3.32]{fabbri2017}.
\begin{definition}\label{testfunctiondefinition}
	A function $\psi$ is a test function if $\psi = \varphi + h(t,\|x\|_{L^2(\Lambda)})$, where
	\begin{enumerate}
		\item $\varphi\in C^{1,2}((0,T)\times L^2(\Lambda))$ is locally bounded, and is such that $\varphi$ is $B$-lower semicontinuous, and $\varphi_t$, $\Delta D\varphi$, $D\varphi$, $D^2\varphi$ are uniformly continuous on $(0,T)\times L^2(\Lambda)$.
		\item $h\in C^{1,2}((0,T)\times \mathbb{R})$ and is such that for every $t\in (0,T)$, $h(t,\cdot)$ is even and $h(t,\cdot)$ is non-decreasing on $[0,\infty)$.
	\end{enumerate}
\end{definition}
Now we recall the definition of a $B$-continuous viscosity solution, see \cite[Definition 3.35]{fabbri2017}.
\begin{definition}
	A locally bounded $B$-upper semicontinuous function $v$ on $(0,T]\times L^2(\Lambda)$ is a viscosity subsolution of \eqref{ch4:hjb} if $v(T,x) \leq H(x)$ for $x\in L^2(\Lambda)$, and whenever $v-\psi$ has a local maximum at a point $(t,x)\in (0,T)\times L^2(\Lambda)$ for a test function $\psi$, then
	\begin{equation}
		\psi_t(t,x) + \langle x, \Delta D\varphi(t,x)\rangle + \inf_{u\in U} \mathcal{H}(x,u,D\psi(t,x),D^2 \psi(t,x)) \geq 0.
	\end{equation}
	A locally bounded $B$-lower semicontinuous function $v$ on $(0,T]\times L^2(\Lambda)$ is a viscosity supersolution of \eqref{ch4:hjb} if $v(T,x) \geq H(x)$ for $x\in L^2(\Lambda)$, and whenever $v+\psi$ has a local minimum at a point $(t,x)\in (0,T)\times L^2(\Lambda)$ for a test function $\psi$, then
	\begin{equation}
		-\psi_t(t,x) - \langle x, \Delta D\varphi(t,x)\rangle + \inf_{u\in U} \mathcal{H}(x,u,-D\psi(t,x),-D^2 \psi(t,x)) \leq 0.
	\end{equation}
	A viscosity solution of \eqref{ch4:hjb} is a function which is both a viscosity subsolution and a viscosity supersolution of \eqref{ch4:hjb}.
\end{definition}

Under additional regularity assumptions, the value function can be characterized as the unique $B$-continuous viscosity solution of the HJB equation \eqref{ch4:hjb}, see \cite[Theorem 3.67]{fabbri2017}. Thus, in combination with the previous result Theorem \ref{verification} we obtain the following verification theorem in terms of a viscosity subsolution to \eqref{ch4:hjb}. 

\begin{theorem}\label{ch4:verification2}
	Suppose there exists a constant $C>0$ such that
	\begin{equation}\label{additionalsigma}
		\| \Sigma(x,u) - \Sigma(\tilde x,u) \|_{L_2(\Xi,L^2(\Lambda))} \leq C \| x-\tilde x \|_{H^{-1}(\Lambda)},
	\end{equation}
	for every $x,\tilde x\in L^2(\Lambda)$ and $u\in U$, and let $L$ and $H$ be locally uniformly continuous in $x\in L^2(\Lambda)$, uniformly in $u\in U$. Let $\mathcal{V}$ be a viscosity subsolution of the HJB equation \eqref{ch4:hjb} satisfying the growth condition \eqref{growthcondition} and the semiconcavity \eqref{semiconcavity}, as well as
	\begin{equation}\label{vequalsh}
		\mathcal{V}(T,x) = H(x)
	\end{equation}
	for all $x\in L^2(\Lambda)$,
	\begin{equation}
		|\mathcal{V}(t,x)| \leq C\left (1+\|x\|_{L^2(\Lambda)}^2 \right ),
	\end{equation}
	and
	\begin{equation}
		\lim_{t\to T} \left ( \mathcal{V}(t,x) - H(S_{T-t} x) \right )^+ =0
	\end{equation}
	uniformly on bounded subsets of $L^2(\Lambda)$, where $(S_r)_{r\geq 0}$ denotes the heat semigroup. Then the following verification theorem holds: Suppose there are adapted processes
	\begin{equation}
		\begin{cases}
			G\in L^2([s,T]\times\Omega;\mathbb{R})\\
			p\in L^2([s,T]\times\Omega;H^1_0(\Lambda))\\
			P\in L^2([s,T]\times\Omega;L_2(L^2(\Lambda)))
		\end{cases}
	\end{equation}
	such that for almost every $t\in [s,T]$
	\begin{equation}
		(G_t,p_t,P_t) \in D^{1,2,+}_{t+,x}\mathcal{V}(t, x^{\ast}_t)
	\end{equation}
	$\mathbb{P}$--almost surely, and
	\begin{equation}
		\mathbb{E} \left [ \int_s^T G_t + \langle \Delta x^{\ast}_t, p_t\rangle_{H^{-1}(\Lambda)\times H^1_0(\Lambda)} + \mathcal{H}( x^{\ast}_t, u^{\ast}_t,p_t,P_t) \mathrm{d}t \right ] \leq 0.
	\end{equation}
	Then $( x^{\ast}, u^{\ast})$ is an optimal pair.
\end{theorem}

\begin{proof}
	Using exactly the same arguments as in the proof of Theorem \ref{verification} up until inequality \eqref{comparison}, and using assumption \eqref{vequalsh}, we obtain
	\begin{equation}
		\mathbb{E} \left [ \int_s^T L( x^{\ast}_t, u^{\ast}_t) \mathrm{d}t + H( x^{\ast}_T) \right ] \leq \mathcal{V}(s,x).
	\end{equation}
	Furthermore, under the additional assumption \eqref{additionalsigma}, the value function is the unique viscosity solution of the HJB equation (see \cite[Theorem 3.67]{fabbri2017} for details). Therefore, applying the comparison result \cite[Theorem 3.54]{fabbri2017}, we have
	\begin{equation}
		\mathcal{V}(s,x) \leq V(s,x)
	\end{equation}
	for any $(s,x) \in (0,T]\times L^2(\Lambda)$, which concludes the proof.
\end{proof}

\begin{remark}\label{remarkiii}
	\begin{enumerate}
		\item In the case of the Nemytskii operator $\sigma$ as defined in Assumption \ref{assumption} (iii), the assumption \eqref{additionalsigma} cannot be satisfied in general. Indeed, consider for example $\Xi = \mathbb{R}$, $(W_t)_{t\in[s,T]}$ is a real-valued Brownian motion, and $\sigma(x,u) = x$, $x\in \mathbb{R}, u \in U$. In this case, assumption \eqref{additionalsigma} reduces to 
		\begin{equation}
			\| x - \tilde x \|_{L^2(\Lambda)} \leq C \| x - \tilde x \|_{H^{-1}(\Lambda)},
		\end{equation}
		for $x,\tilde x\in L^2(\Lambda)$, which is not true. However, one can approximate the Nemytskii operator $\sigma (x(\lambda ),u) (\xi)$ for $x\in L^2 (\Lambda)$, $\lambda\in \Lambda$ and $\xi\in\Xi$, with smooth noise coefficients of the following type 
		\begin{equation}
			\Sigma_\varepsilon (x , u) (\xi) (\lambda)  
			= \sigma \left( \int_\Lambda f_\varepsilon (\lambda ,\mu) x(\mu) \mathrm{d}\mu , u\right) (\xi) 
		\end{equation}
		where  $(f_\varepsilon )_{\varepsilon > 0} \subset C^1 (\Lambda^2)$ is a mollifier such that $\int_\Lambda f_\varepsilon (\lambda ,\mu) x(\mu ) \mathrm{d}\mu \in H_0^1 (\Lambda)$ and the integral converges to $x$ in $L^2(\Lambda)$ as $\varepsilon\to 0$. Under the assumptions imposed on $\sigma$ in Assumption \ref{assumption} (iii), $\Sigma_\varepsilon$ satisfies the additional regularity condition  
		\eqref{additionalsigma}.
		\item All our results generalize to the case of uniformly elliptic operators in divergence form with Dirichlet boundary conditions formally given by
		\begin{equation}
			Ax(\lambda) = \partial_{\lambda} (a \partial_{\lambda}x)(\lambda)
		\end{equation}
		for some $a\in L^{\infty}(\Lambda)$ with $a \geq a_0 > 0$. Indeed, the variational setting relies on the monotonicity and the coercivity of the unbounded operator which also holds for the operator $A$, see also \cite[Remark 6.2]{stannat2020} for more details. Furthermore, $A$ is the generator of an analytic semigroup, hence we can still define $\delta$ as a continuous operator from $H^1_0(\Lambda^2)$ to $L^2(\Lambda)$ (cf. Remark \ref{deltaremark}), and Lemma \ref{regularityterminalcondition} and Proposition \ref{propertiesofv} still hold. Finally, the strong $B$-condition needed for the proof of Theorem \ref{verification} part (i) is shown in \cite[Example 3.16]{fabbri2017}.
		\item In this paper, we chose to work with variational solutions for the state equation \eqref{stateequation}. Other types of solutions such as strong solutions or mild solutions could also be considered. However, for strong solutions, let us recall that in order to assure the existence of a solution one has to impose additional assumptions on the coefficients of the state equation. Once, the existence of a strong solution to the state equation is assured, the proofs significantly simplify since the terms involving the unbounded operator such as $\langle \Delta \bar x_t,p_t \rangle$ in the statement of Theorem \ref{spacetime} and Theorem \ref{verification} are well-defined without any additional regularity for $p_t$, cf. \cite{chen2022}.
		
		On the other hand, for mild solutions, we would like to recall that the mild solution formula does not yield a semimartingale decomposition and therefore there is no (direct) It\^o formula for mild solutions. Moreover, we exploit the fact that the solution of the state equation satisfies
		\begin{equation*}
			\bar x \in L^2([s,T]\times\Omega; H^1_0(\Lambda))
		\end{equation*}
		which is immediate in the variational setting but requires additional assumptions in the case of mild solutions.
	\end{enumerate}	
\end{remark}

\section*{Acknowledgements} The authors would like to express their gratitude to Andrzej {\'{S}}wi{\k{e}}ch for his helpful comments on a first version of this manuscript. Furthermore, the authors would like to thank the anonymous reviewers for their constructive feedback, which helped to improve the manuscript.

\section*{Funding} This work was funded by Deutsche Forschungsgemeinschaft (DFG) through grant CRC 910 ``Control of self-organizing nonlinear systems: Theoretical methods and concepts of application,'' Project (A10) ``Control of stochastic mean-field equations with applications to brain networks,'' while the second author was affiliated with Technische Universit\"at Berlin.


\end{document}